\documentclass[a4paper,10pt,leqno]{amsart}
\title{Universal $L^2$-torsion, polytopes and  applications to $3$-manifolds}

\author{Stefan Friedl}
\address{Fakult\"at f\"ur Mathematik\\ Universit\"at Regensburg\\93040 Regensburg\\   Germany}
\email{sfriedl@gmail.com}

\author{Wolfgang L\"uck}
        \address{Mathematisches Institut der Universit\"at Bonn\\
                Endenicher Allee 60\\
                53115 Bonn, Germany}
         \email{wolfgang.lueck@him.uni-bonn.de}
          \urladdr{http://www.him.uni-bonn.de/lueck}

         \date{September   2016.}
\keywords{Universal $L^2$-torsion, algebraic $K_1$-groups,  polytopes, $3$-manifolds}
    \subjclass[2010]{57Q10, 57M27, 19B99}


\usepackage{hyperref}
\usepackage{color}
\usepackage{pdfsync}
\usepackage{calc}
\usepackage{enumerate,amssymb,enumitem}
\usepackage[arrow,curve,matrix,tips,2cell]{xy}
  \SelectTips{eu}{10} \UseTips
  \UseAllTwocells

\DeclareMathAlphabet\EuR{U}{eur}{m}{n}
\SetMathAlphabet\EuR{bold}{U}{eur}{b}{n}

\makeindex             



\theoremstyle{plain}
\newtheorem{theorem}{Theorem}[section]
\newtheorem{lemma}[theorem]{Lemma}
\newtheorem{proposition}[theorem]{Proposition}

\theoremstyle{definition}

\newtheorem{definition}[theorem]{Definition}
\newtheorem{example}[theorem]{Example}

\newtheorem{remark}[theorem]{Remark}

{\catcode`@=11\global\let\c@equation=\c@theorem}


\newcommand{\comsquare}[8]                   
{\begin{CD}
#1 @>#2>> #3\\
@V{#4}VV @V{#5}VV\\
#6 @>#7>> #8
\end{CD}
}

\newcommand{\xycomsquare}[8]                   
{\xymatrix
{#1 \ar[r]^{#2} \ar[d]^{#4} &
#3 \ar[d]^{#5}  \\
#6\ar[r]^{#7} &
#8
}
}

\newcommand{\xycomsquareminus}[8]                      
{\xymatrix{#1 \ar[r]^-{#2} \ar[d]^-{#4} &
#3 \ar[d]^-{#5}  \\
#6\ar[r]^-{#7} &
#8
}
}



\newcommand{\cald}{\mathcal{D}}

\newcommand{\calsn}{\mathcal{SN}}


\newcommand{\calb}{\mathcal{B}}
\newcommand{\calc}{\mathcal{C}}

\newcommand{\caln}{{\mathcal N}}
\newcommand{\calp}{\mathcal{P}}
\newcommand{\calr}{\mathcal{R}}
\newcommand{\calu}{\mathcal{U}}


\newcommand{\IC}{{\mathbb C}}

\newcommand{\IN}{{\mathbb N}}

\newcommand{\IP}{{\mathbb P}}
\newcommand{\IQ}{{\mathbb Q}}
\newcommand{\IR}{{\mathbb R}}

\newcommand{\IZ}{{\mathbb Z}}


\newcommand{\bfP}{{\mathbf P}}



\newcommand{\abel}{\operatorname{abel}}

\newcommand{\ch}{\operatorname{ch}}

\newcommand{\coker}{\operatorname{coker}}

\newcommand{\cone}{\operatorname{cone}}

\newcommand{\el}{\operatorname{el}}
\newcommand{\ev}{\operatorname{ev}}
\newcommand{\Ext}{\operatorname{Ext}}

\renewcommand{\hom}{\operatorname{Hom}}

\newcommand{\id}{\operatorname{id}}

\newcommand{\im}{\operatorname{im}}

\newcommand{\odd}{\operatorname{odd}}

\newcommand{\orb}{\operatorname{orb}}
\newcommand{\poly}{\operatorname{poly}}
\newcommand{\pr}{\operatorname{pr}}

\newcommand{\Rep}{\operatorname{Rep}}

\newcommand{\rk}{\operatorname{rk}}

\newcommand{\sn}{\operatorname{sn}}

\newcommand{\supp}{\operatorname{supp}}

\newcommand{\tors}{\operatorname{tors}}

\newcommand{\Wh}{\operatorname{Wh}}




\newcommand{\smfrac}[2]{\mbox{\footnotesize$\displaystyle\frac{#1}{#2}$}} 
\newcommand{\tmfrac}[2]{\mbox{\large$\frac{#1}{#2}$}} 

\newcommand{\smsum}[2]{\mbox{\footnotesize$\displaystyle\sum\limits_{#1}^{#2}$}} 
\newcommand{\tmsum}[2]{\mbox{$\textstyle \sum\limits_{#1}^{#2}$}} 


\newcommand{\higherlim}[3]{{\setbox1=\hbox{\rm lim}
        \setbox2=\hbox to \wd1{\leftarrowfill} \ht2=0pt \dp2=-1pt
        \mathop{\vtop{\baselineskip=5pt\box1\box2}}
        _{#1}}^{#2}#3}

\newcommand{\version}[1]                       
{\begin{center} last edited on #1\\
last compiled on \today\\
name of texfile: \jobname
\end{center}
}

\newcounter{commentcounter}


\begin{document}

\typeout{----------------------------  localization.tex  ----------------------------}


\typeout{------------------------------------ Abstract ----------------------------------------}

\begin{abstract}
  Given an $L^2$-acyclic connected finite $CW$-complex, we define its universal
  $L^2$-torsion in terms of the chain complex of its universal covering. It takes values
  in the weak Whitehead group $\Wh^w(G)$. We study its main properties such as homotopy
  invariance, sum formula, product formula and Poincar\'e duality.  Under certain assumptions, 
we can specify certain homomorphisms from the weak Whitehead
  group $\Wh^w(G)$ to abelian groups such as the real numbers or the Grothendieck group of
  integral polytopes, and the image of the universal $L^2$-torsion can be identified with
  many invariants such as the $L^2$-torsion, the $L^2$-torsion function, twisted
  $L^2$-Euler characteristics and, in the case of a $3$-manifold, the dual Thurston norm
  polytope.
\end{abstract}

\maketitle


 \typeout{-------------------------------   Section 0: Introduction --------------------------------}

\setcounter{section}{-1}
\section{Introduction}

We assign to a group $G$ the weak $K_1$-groups $K_1^w(\IZ G)$, $\widetilde{K}_1^w(\IZ G)$
and the weak Whitehead group $\Wh^w(G)$, see Definition~\ref{def:K_1_upper_w(ZG)}. These
groups are, as the names suggest, variations on the corresponding classical groups. More
precisely, the name `weak' comes from the fact that in the definitions we study matrices
over the group ring $\IZ G$, but we no longer require that they are
invertible over the group ring, but we only require that they are weakly invertible in the
sense of $L^2$-invariants.  These groups are in general much larger than their classical
analogues. For instance, for $G = \IZ$ we have $\Wh(\IZ)=0$ but $\Wh^w(\IZ)\cong
\IQ(z^{\pm 1})^{\times}/\{\pm z^n \mid n \in \IZ\}$.

Furthermore, to an $L^2$-acyclic
finite based free $\IZ G$-chain complex $C_*$ we assign its \emph{universal $L^2$-torsion}, see
Definition~\ref{def:Universal_L2-torsion_for_L2-acyclic_finite_based_free_ZG-chain_complexes}
\begin{equation}
  \rho^{(2)}_u(C_*) \in \widetilde{K}_1^w(G).
\end{equation}
It is characterized by the universal properties that  \[\rho^{(2)}_u\big(0\to \IZ G\xrightarrow{\pm \id}\IZ G\to 0\big) = 0\]
 and that for any short based exact sequence $0 \to C_* \to D_* \to E_* \to 0$ we get 
$\rho^{(2)}_u(D_*) =\rho^{(2)}_u(C_*) + \rho^{(2)}_u(E_*)$, as explained in
Remark~\ref{rem:universal_property_of_the_universal_L2-torsion}. If $X$ is an $L^2$-acyclic finite
free $G$-$CW$-complex, it defines an element
\begin{equation}
  \rho^{(2)}_u(X;\caln(G)) \in \Wh^w(G):=\widetilde{K}_1^w(G)/\pm G,
\end{equation}
determined by $\rho^{(2)}_u(C_*(X))$, where $C_*(X)$ is the cellular $\IZ G$-chain complex
of $X$, see Definition~\ref{def:universal_L2-torsion_for_G-CW-complexes}.  As an example,
if $X$ is the free abelian cover of a knot exterior $S^3\setminus \nu K$, then under the
aforementioned isomorphism $\Wh^w(\IZ)\cong \IQ(z^{\pm 1})^{\times}/\{\pm z^n \mid n \in
\IZ\}$ the universal torsion $\rho^{(2)}_u(X;\caln(\IZ))$ agrees with the Turaev-Milnor
torsion~\cite{Milnor(1968a),Turaev(1986),Turaev(2001)} of the knot.

If $X$ is a finite
connected $CW$-complex with fundamental group $\pi$, and universal covering
$\widetilde{X}$, we call $X$ $L^2$-acyclic if $\widetilde{X}$ is $L^2$-acyclic as finite
free $\pi$-$CW$-complex and abbreviate
\begin{equation}
  \rho^{(2)}_u(\widetilde{X}) := \rho^{(2)}_u(\widetilde{X};\caln(\pi)) \in \Wh^w(\pi).
\end{equation}

The basic properties of these invariants including homotopy invariance, sum formula,
product formula, and Poincar\'e duality are collected in
Theorem~\ref{the:Main_properties_of_the_universal_L2-torsion} and
Theorem~\ref{the:Main_properties_of_the_universal_L2-torsion_for_universal_coverings}. One
can show for a finitely presented group $G$, for which there exists at least one
$L^2$-acyclic finite connected $CW$-complex $X$ with $\pi_1(X) = G$, that every element in
$\Wh^w(G)$ can be realized as $\rho^{(2)}_u(\widetilde{Y})$ for some $L^2$-acyclic
finite connected $CW$-complex $Y$ with $G = \pi_1(Y)$, see
Lemma~\ref{lem:realizability_G-CW}. 

The point of this new invariant is that it encompasses many other well-known invariants.
We illustrate this by considering some examples. Although these invariants do make sense
in all dimensions, we often restrict ourselves to the case of admissible $3$-manifolds $M$:

\begin{definition}[Admissible $3$-manifold] \label{def:admissible_3-manifold} 
   A $3$-manifolds is called \emph{admissible} if it is compact, connected, orientable, and irreducible,
  its boundary is empty or a disjoint union of tori, and its fundamental group
  is infinite.
\end{definition}


\subsection{$L^2$-torsion and the $L^2$-Alexander torsion}
In Section~\ref{subsec:L2-torsion} we will see that the universal $L^2$-torsion of a
$3$-manifold $M$ determines the $L^2$-torsion and more generally the $L^2$-torsion function
(also called $L^2$-Alexander polynomial and $L^2$-Alexander torsion) that recently was
intensively studied, see e.g.,
\cite{BenAribi(2016),Dubois-Friedl-Lueck(2016Alexander),Dubois-Wegner(2015),Herrmann(2016),
Li-Zhang(2006volume),Li-Zhang(2006Alexander)}. The
former invariant is determined by the volumes of the hyperbolic pieces in the
Jaco-Shalen-Johannson decomposition of $M$,
see~\cite[Theorem~0.7]{Lueck-Schick(1999)}. The latter invariant is a function
$\rho^{(2)}(\widetilde{M},\phi)\colon \IR_{>0}\to \IR_{\geq 0}$ associated to $M$ and a class $\phi\in H^1(M;\IZ)$.  This
function captures a lot of interesting topological information, in particular it was shown
by the authors~\cite{Friedl-Lueck(2015l2+Thurston)} and independently by
Liu~\cite{Liu(2015)} that it determines the Thurston norm of $\phi$.


\subsection{Twisted $L^2$-Euler characteristic}
  \label{subsec:twisted_L2-Euler_characteristic}
In Section~\ref{subsec:The_polytope_homomorphism} we recall the statement of the Atiyah Conjecture. 
  For a short discussion of the Atiyah Conjecture for a torsionfree group $G$ and
  its status which is adapted to the needs of this paper, we refer
  to~\cite[Section~3]{Friedl-Lueck(2016l2+poly)}, a more general introduction is given
  in~\cite[Chapter~10]{Lueck(2002)}.  In Remark~\ref{rem:pairing_and_introduction}, 
  given  a torsionfree group $G$ satisfying the Atiyah  Conjecture, we will introduce a pairing
  \begin{equation}
  \Wh^w(\pi) \times H^1(G) \to \IZ,
  \label{pairing_wh_otimes_H1_to_Z}
  \end{equation}
  which has the following property: given an $L^2$-acyclic finite free $\IZ G$-chain complex $C_*$ and an element 
  $\phi \in   H^1(G) = \hom(G,\IZ)$ the image of
  $\rho^{(2)}_u(C_*;\caln(G)) \otimes \phi$ under the pairing above equals the $L^2$-Euler characteristic 
we introduced in~\cite{Friedl-Lueck(2016l2+poly)}.
  
   It is now easy to see that for an admissible $3$-manifold $M$ and
  $\phi\colon \pi_1(M)\to \IZ$ the $L^2$-Euler characteristic of
  \cite{Friedl-Lueck(2016l2+poly)} is determined by the universal $L^2$-torsion
  $\rho_u^{(2)}(\widetilde{M})$. Similarly, using~\cite[Section~8]{Friedl-Lueck(2016l2+poly)} 
  one can show that the degrees of the
  higher-order Alexander polynomials of Cochran~\cite{Cochran(2004)} and
  Harvey~\cite[Section~3]{Harvey(2005)} of knots and $3$-manifolds are determined by
  universal $L^2$-torsions of appropriate covers of $M$. 


\subsection{Polytopes}
  \label{subsec:polytopes}
  For a finitely generated free abelian group $H$ we denote by $\calp_{\IZ}^{\Wh}(H)$ the abelian group given
  by the Grothen\-dieck construction applied to the abelian monoid of integral polytopes
  in $\IR \otimes_{\IZ} H$ under the Minkowski sum, modulo translation with elements in
  $H$, see~\eqref{calp_IZ,Wh}. Given a group $G$ that satisfies the Atiyah Conjecture we
  will construct the \emph{polytope homomorphism}
\[
\IP \colon \Wh^w(\IZ G) \to \calp_{\IZ}^{\Wh}(H_1(G)_f)
\]
in~\eqref{polytope_homomorphism_Wh} and define the \emph{$L^2$-torsion polytope}
$P(\widetilde{M})$ to be the image of the universal $L^2$-torsion under the polytope homomorphism.

Of particular interest is the composition of the universal torsion with the polytope
homomorphism. For example let $M$ be an admissible $3$-manifold that is not a closed graph manifold. Then we
obtain a well-defined element
\[
P(\widetilde{M})\,:=\, \IP(\rho_u(\widetilde{M}))\,\in \, \calp_{\IZ}^{\Wh}(H_1(M)_f).
\]
We now explain the information contained in this invariant. Following Thurston~\cite{Thurston(1986)} we can
use the minimal complexity of surfaces in a $3$-manifold $M$ to assign to $M$ an integral
polytope $T(M) \subseteq \IR \otimes_{\IZ} H_1(\pi)_f$, called the \emph{dual Thurston
  polytope}, see~\eqref{dual_Thurston_polytope}  for details. One of our main results is that we show in
Theorem~\ref{the:dual_Thurston_polytope_and_the_L2-torsion_polytope_new} that 
\begin{eqnarray*} 
[T(M)^*] & = & 2 \cdot  P(\widetilde{M})\, \in \, \calp_{\IZ}^{\Wh}(H_1(\pi)_f).
\end{eqnarray*}
We can also
use this approach to assign formal differences of polytopes to many other groups, e.g.\
free-by-cyclic groups and two-generator one-relator groups. We will discuss these examples
in more details in a joint paper~\cite{Friedl-Lueck-Tillmann(2016)} with Stephan Tillmann.

Finally let $G$ be any group that admits a finite model for $BG$ and that satisfies the
Atiyah Conjecture and let $f\colon G\to G$ be a monomorphism. Then we can associate to
this monomorphism the polytope invariant of the corresponding ascending HNN-extension. If
$G=F_2$ is the free group on two generators this polytope invariant has been studied by
Funke--Kielak~\cite{Funke-Kielak(2016)}. We hope that this invariant of monomorphisms of
groups will have other interesting applications.


\subsection*{Acknowledgments.}
We wish to thank the referee for carefully reading the first version of the paper and for making many very helpful remarks.

The first author gratefully acknowledges the support provided by the SFB 1085 ``Higher
Invariants'' at the University of Regensburg, funded by the Deutsche
Forschungsgemeinschaft {DFG}.  The paper is financially supported by the Leibniz-Preis of
the second author granted by the {DFG} and the ERC Advanced Grant ``KL2MG-interactions''
(no.  662400) of the second author granted by the European Research Council.

\tableofcontents


 \typeout{------------------   Section 1: Universal $L^2$-torsion for chain complexes-------------------}

\section{Universal $L^2$-torsion for chain complexes}
\label{sec:Universal_torsion_for_chain_complexes}

In this section we define the universal $L^2$-torsion and the algebraic $K$-group where it
takes values in.  

%
%


\subsection{From finite based free $\IZ G$-chain complexes to finite Hilbert $\caln(G)$-chain complexes}
\label{subsec:From_finite_based_free_ZG-chain_complexes_to_finite_Hilbert_caln(G)-chain_complexes}

Let $G$ be a group.  We will always work with left modules.
A \emph{$\IZ G$-basis} for a finitely generated free $\IZ G$-module
$M$ is a finite subset $B \subseteq M$ such that the canonical map 
\[
\bigoplus_{b \in B} \IZ G \to  M, \quad \; (r_b)_{b \in B} 
\mapsto \sum_{b \in B} r_b \cdot b
\]
 is a $\IZ G$-isomorphism.  Notice that we do not
require that $B$ is ordered.  A \emph{finitely generated based free $\IZ G$-module} is
a pair $(M,B)$ consisting of a finitely generated free $\IZ G$-module $M$ together with a
basis $B$.  (Sometimes we omit $B$ from the notation.) In the sequel we will equip 
$\IZ G^n = \bigoplus_{i = 1}^n \IZ G$ with the standard basis.
Throughout this paper a basis is understood to be unordered.

A \emph{finite Hilbert $\caln(G)$-module} $V$ is a Hilbert space $V$ together with a linear
isometric left $G$-action such that there exists an isometric linear embedding into $L^2(G)^n$
for some natural number $n$.  (Here $\caln(G)$ stands for the group von Neumann algebra
which is the algebra of bounded $G$-equivariant operators $L^2(G) \to L^2(G)$.)
Obviously $L^2(G)^n$ is a finite Hilbert
$\caln(G)$-module. Morphisms of finite Hilbert $\caln(G)$-modules are bounded
$G$-equivariant operators (which are not necessarily isometric).  
For a basic introduction to Hilbert $\caln(G)$-chain complexes and $L^2$-Betti
numbers we refer for instance to~\cite[Chapter~1]{Lueck(2002)}.
For a finitely generated  based free $\IZ G$-module $(M,B)$ define a finite Hilbert
$\caln(G)$-module $\Lambda^G(M)$ by $L^2(G) \otimes_{\IZ G} M$ equipped with the Hilbert
space structure, for which the induced map
\[
\bigoplus_{b \in B} L^2(G) \to L^2(G) \otimes_{\IZ G} M, 
\quad (x_b)_{b \in B} \mapsto \sum_{b \in B} x_b \otimes_{\IZ G} b
\]
is a $G$-equivariant isometric invertible operator of Hilbert spaces. (If $G$ is clear
from the context, we often abbreviate $\Lambda^G$ by $\Lambda$.)  Obviously $\Lambda(\IZ G^n)$ 
can be identified with $L^2(G)^n$.  One easily checks that a $\IZ G$-map of finitely
generated based free $\IZ G$-modules $f \colon M \to N$ induces a morphism $\Lambda(f)$ of Hilbert
$\caln(G)$-modules. This construction is functorial. Moreover we have
$r_1 \cdot \Lambda(f_1) + r_2 \cdot \Lambda (f_2) = \Lambda(r_1 \cdot f_0 + r_2 \cdot f_2)$
for integers $r_1, r_2$ and $\IZ G$-maps $f_0, f_1 \colon M \to N$ of finitely
generated based free $\IZ G$-modules.  Obviously $\Lambda$ is compatible with direct sums,
i.e., for two finitely generated based free $\IZ G$-modules $M_0$ and $M_1$ the canonical
map $\Lambda(M_0) \oplus \Lambda(M_1) \to \Lambda(M_0 \oplus M_1)$
is an isometric $G$-equivariant invertible operator.  Let $f \colon M \to N$ be a $\IZ
G$-homomorphism of finitely generated based free $\IZ G$-modules. If $B$ and $C$ are the
bases of $M$ and $N$, let $A(f)$ be the $B$-$C$-matrix defined by $f$, namely, if for $b \in
B$ we write $f(b) = \sum_{c \in C} a_{b,c} \cdot c$ for $a_{b,c} \in \IZ G$, then $A =
(a_{b,c})$.  Let $A^*$ be the $C$-$B$-matrix, whose entry at $(c,b) \in C \times B$ is
$\overline{a_{b,c}}$, where $\overline{\sum_{g \in G} r_g \cdot g} := \sum_{g \in G} r_g \cdot g^{-1}$ 
for $\sum_{g \in G} r_g \cdot g \in \IZ $.  Define $f^* \colon N \to M$ to
be the $\IZ G$-homomorphism associated to $A^*$. One easily checks that $\Lambda(f^*) =
\Lambda(f)^*$, where $\Lambda(f)^*$ is the adjoint operator associated to $\Lambda(f)$.
All in all one may summarize by saying that $\Lambda$ defines a functor of additive categories with
involution from the category of finitely generated based free $\IZ G$-modules to the
category of finite Hilbert $\caln(G)$-modules.

Given an $(m,n)$-matrix $A = (a_{i,j})$, where $i \in \{1,2, \ldots, m\}$ and $j \in \{1,2, \ldots, n\}$,
right multiplication with $A$ defines a $\IZ G$-homomorphism 
\[
r_A \colon \IZ G^m \to \IZ G^n, \quad (x_1, x_2, \ldots , x_m) 
\mapsto (x_1, x_2, \ldots , x_m) \cdot A = \Big(\, \tmsum{i = 1}{m} x_i \cdot a_{i,j}\Big)_{j = 1,2, \ldots, n}
\]
Notice that $A$ is the matrix $A(r_A)$ associated to $r_A$. 

A finite based free $\IZ G$-chain complex $C_*$ is a $\IZ G$-chain complex $C_*$ such that
there exists a natural number $N$ with $C_n = 0$ for $|n| > N$ and such that each chain
module $C_n$ is a finitely generated based free $\IZ G$-module.  Denote by $\Lambda(C_*)$
the finite Hilbert $\caln(G)$-chain complex which is obtained by applying the functor
$\Lambda$. Here by a finite Hilbert $\caln(G)$-chain complex $D_*$ we mean a chain complex
in the category of finite Hilbert $\caln(G)$-chain complexes such that there exists a
natural number $N$ with $D_n = 0$ for $n > N$.

\subsection{The weak $K_1$-group $K_1^w(\IZ G)$}
\label{subsec:K_1_upper_w(ZG)}

The universal $L^2$-torsion, that we will define in
Section~\ref{subsec:The_universal_L2-torsion_for_chain_complexes}, will take values in the
following $K_1$-group.  Recall that a morphism $f \colon V \to W$ of finite Hilbert
$\caln(G)$-modules is called a \emph{weak isomorphism} if it is injective and has dense
image.

\begin{definition}[$K^w_1(\IZ G)$]
\label{def:K_1_upper_w(ZG)} 
Define  the \emph{weak $K_1$-group}
\[
K_1^w(\IZ G)
\]
to be the abelian group defined in terms of generators and relations as follows.
Generators $[f]$ are given by  of $\IZ G$-endomorphisms 
$f \colon \IZ G^n \to \IZ G^n$ for $n \in \IZ, n \ge 0$ such that $\Lambda(f)$ is a weak isomorphism of
finite Hilbert $\caln(G)$-modules.  If $f_1,f_2 \colon \IZ G^n \to \IZ G^n$ are $\IZ G$-endomorphisms 
such that $\Lambda(f_1)$ and $\Lambda(f_2)$ are weak isomorphisms, then
we require the relation
\[
[f_2 \circ f_1] = [f_1] + [f_2].
\]
If $f_0 \colon \IZ G^m \to \IZ G^m$, $f_2 \colon \IZ G^n \to \IZ G^n$ and $f_1 \colon \IZ G ^n \to \IZ G^m$ 
are $\IZ G$-maps such that $\Lambda(f_0)$ and $\Lambda(f_2)$ are weak isomorphisms, then 
we require for the $\IZ G$-map 
\[
f = \begin{pmatrix} f_0 & f_1 \\ 0 & f_2 \end{pmatrix} \colon \IZ G^{m + n} = \IZ G^m \oplus \IZ G^n  
\to \IZ G^m \oplus \IZ G^n
\]
the relation
\[
[f] = [f_0] + [f_2].
\]
 Let 
\[
\widetilde{K}^w_1(\IZ G)
\]
be the quotient of $K_1^w(\IZ G)$ by the subgroup generated by
the element $[- \id \colon \IZ G\to \IZ G]$.
This is the same as the cokernel of the obvious
composite $K_1(\IZ) \to K_1^w(\IZ G)$.
Define the \emph{weak Whitehead group} of $G$ 
\[
\Wh^w(G)
\]
to be the cokernel of the homomorphism
\[
\{\sigma \cdot g \mid \sigma \in \{ \pm 1\}, g \in G\} \to K_1^w(\IZ G), 
\quad 
\sigma \cdot g \mapsto [r_{\sigma \cdot g} \colon \IZ G \to \IZ G].
\]
\end{definition}

Definition~\ref{def:K_1_upper_w(ZG)}  makes sense since the morphisms appearing above
$\Lambda(f_2 \circ f_2)$ and $\Lambda(f)$ are again weak isomorphism
by~\cite[Lemma~3.37 on page~144]{Lueck(2002)}.

\begin{remark}
We obtain the classical notions of  $K_1(\IZ G)$ and $\widetilde{K}_1(\IZ G)$ if we replace in
Definition~\ref{def:K_1_upper_w(ZG)}  above everywhere the condition that $\Lambda(f)$ is a weak isomorphism of
finite Hilbert $\caln(G)$-modules by the stronger condition that $f$ is a $\IZ G$-isomorphism.
\end{remark}

\subsection{The universal $L^2$-torsion for chain complexes}
\label{subsec:The_universal_L2-torsion_for_chain_complexes}

Recall that a $\IZ G$-chain complex $C_*$ is \emph{contractible}, if $C_*$ possesses a \emph{chain contraction}
$\gamma_*$, i.e., a sequence of $\IZ G$-maps $\gamma_n \colon C_n \to C_{n+1}$
such that $c_{n+1} \circ \gamma_n + \gamma_{n-1} \circ c_n = \id_{C_n}$ holds for all $n \in \IZ$. 
Moreover, for a finite based free contractible $\IZ G$-chain complex $C_*$ its \emph{(classical) torsion}
\begin{eqnarray}
\rho(C_*) \in \widetilde{K}_1(\IZ G)
\label{classical_torsion}
\end{eqnarray}
is defined for any choice of chain contraction $\gamma_*$ by the class $[(c + \gamma)_{\odd}]$
of the $\IZ G$-isomorphism  of finitely generated based free $\IZ G$-modules 
$(c + \gamma)_{\odd} \colon C_{\odd} \xrightarrow{\cong} C_{\ev}$, where here and in the 
sequel we write
\[
C_{\odd} 
\,\, = \,\, 
\bigoplus_{n \in \IZ} C_{2n+1} \quad \mbox{ and }\quad 
C_{\ev} 
\,\, = \,\,
\bigoplus_{n \in \IZ} C_{2n}.
\]
As we mentioned above, a basis is understood to be unordered. 
This implies that the class
 $[(c + \gamma)_{\odd}]$ is not well-defined in ${K}_1(\IZ G)$,
 but it is indeed well-defined in  $\widetilde{K}_1(\IZ G)={K}_1(\IZ G)/[-\id]$.

We want to carry over these two notions to finite based free $\IZ G$-chain complexes $C_*$
for which $\Lambda(C_*)$ is $L^2$-acyclic. Recall that a finite Hilbert $\caln(G)$-chain
complex $D_*$ is \emph{$L^2$-acyclic} if for every $n \in \IZ $ its 
\emph{$n$-th   $L^2$-homology group} $H_n^{(2)}(D_*)$, which is defined to be the finite Hilbert
$\caln(G)$-module given by the quotient of the kernel of the $n$-th differential by the
closure of the image of the $(n+1)$-st differential, vanishes, or, equivalently, if for
each $n \in \IZ$ the \emph{$n$-th $L^2$-Betti number} $b_n^{(2)}(D_*)$, which is defined
to be the von Neumann dimension of $H_n^{(2)}(D_*)$, vanishes.

\begin{definition}[Weak chain contraction]
\label{def:weak_chain_contractible}
Consider a $\IZ G$-chain complex $C_*$. A \emph{weak chain contraction}
$(\gamma_*,u_*)$ for $C_*$ consists of a $\IZ G$-chain map $u_* \colon C_* \to C_*$ and 
a $\IZ G$-chain homotopy $\gamma_* \colon u_* \simeq 0_*$ such that $\Lambda(u_n)$ is a weak isomorphism
for all $n \in \IZ$ and $\gamma_n \circ u_n = u_{n+1} \circ \gamma_n $ holds for all $n \in \IZ$.  
\end{definition}

Let $C_*$ be a finite based free $\IZ G$-chain complex. Denote by $\Delta_n \colon C_n \to C_n$ 
its \emph{$n$-th combinatorial Laplace operator} which is the $\IZ G$-map 
$\Delta_n = c_{n+1} \circ c_{n+1}^* + c_n^* \circ c_n \colon C_n \to C_n$.  It has the property that
$\Lambda(\Delta_n)$ is the Laplace operator of the finite Hilbert $\caln(G)$-chain complex
$\Lambda(C_*)$, where the $n$-th Laplace operator of a finite Hilbert $\caln(G)$-chain
complex $D_*$ is the morphisms of finite Hilbert $\caln(G)$-modules $d_{n+1} \circ
d_{n+1}^* + d_n^* \circ d_n \colon D_n \to D_n$.

\begin{lemma} \label{lem:L2-acyclic_and_weak_chain_contractions}
Let $C_*$ be a finite based free $\IZ G$-chain complex. Then the following assertions are equivalent:

\begin{enumerate}[font=\normalfont]

\item \label{lem:L2-acyclic_and_weak_chain_contractions:Laplace}
$\Lambda(\Delta_n)$ is a weak isomorphism for all $n \in \IZ$;

\item \label{lem:L2-acyclic_and_weak_chain_contractions:weak_composite_zero}
There exists a weak chain contraction $(\gamma_*,u_*)$ with $\gamma_n \circ \gamma_{n-1} = 0$ for all
$n \in \IZ$;

\item \label{lem:L2-acyclic_and_weak_chain_contractions:weak}
There exists a weak chain contraction $(\gamma_*,u_*)$;

\item \label{lem:L2-acyclic_and_weak_chain_contractions:L2-acyclic}
$\Lambda(C_*)$ is $L^2$-acyclic.

\end{enumerate}

\end{lemma}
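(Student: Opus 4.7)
The plan is to establish the cyclic chain of implications $(1) \Rightarrow (2) \Rightarrow (3) \Rightarrow (4) \Rightarrow (1)$. Since $(2) \Rightarrow (3)$ is immediate from the definitions, the work lies in the remaining three steps.

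For $(1) \Rightarrow (2)$, the natural candidate is the explicit weak chain contraction $u_* := \Delta_*$ together with $\gamma_n := c_{n+1}^*$. I would first verify that $\Delta_*$ really is a $\IZ G$-chain map: the identity $c_n \Delta_n = c_n c_n^* c_n = \Delta_{n-1} c_n$ follows from $c_{n-1} \circ c_n = 0$ and $c_n \circ c_{n+1} = 0$. The defining formula $\Delta_n = c_{n+1} c_{n+1}^* + c_n^* c_n$ then literally says that $\gamma_*$ is a chain homotopy from $\Delta_*$ to $0$. The relations $\gamma_n \circ \gamma_{n-1} = (c_n c_{n+1})^* = 0$ and $\gamma_n u_n = u_{n+1} \gamma_n = c_{n+1}^* c_{n+1} c_{n+1}^*$ are then straightforward computations using $c \circ c = 0$. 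Under hypothesis (1) each $\Lambda(u_n) = \Lambda(\Delta_n)$ is a weak isomorphism, so this produces a weak chain contraction with the stronger property $\gamma \circ \gamma = 0$.

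For $(4) \Rightarrow (1)$, I would invoke the Hodge-type decomposition standard for finite Hilbert $\caln(G)$-chain complexes. The operator $\Lambda(\Delta_n)$ is positive and self-adjoint, its kernel equals $\ker \Lambda(c_n) \cap \ker \Lambda(c_{n+1})^*$, and the orthogonal decomposition $\ker \Lambda(c_n) = \overline{\operatorname{im} \Lambda(c_{n+1})} \oplus \ker \Lambda(\Delta_n)$ identifies $\ker \Lambda(\Delta_n)$ canonically with the reduced $L^2$-homology $H_n^{(2)}(\Lambda(C_*))$. Under hypothesis (4) this kernel vanishes, and self-adjointness then forces $\overline{\operatorname{im} \Lambda(\Delta_n)} = (\ker \Lambda(\Delta_n))^\perp = \Lambda(C_n)$, so $\Lambda(\Delta_n)$ is a weak isomorphism.

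The main obstacle is $(3) \Rightarrow (4)$. Applying $\Lambda$ to $(\gamma_*, u_*)$ produces a chain endomorphism $\Lambda(u_*)$ that is a weak isomorphism on every chain module and is chain-homotopic to zero; hence the induced endomorphism of $H_n^{(2)}(\Lambda(C_*))$ is zero. The strategy is to additionally show that this induced endomorphism has dense image, which combined with vanishing forces $H_n^{(2)}(\Lambda(C_*)) = 0$. Writing $K_n = \ker \Lambda(c_n)$ and $I_n = \overline{\operatorname{im} \Lambda(c_{n+1})}$, so that $K_n = I_n \oplus H_n^{(2)}(\Lambda(C_*))$ orthogonally, I would check that $\Lambda(u_n)$ carries $K_n$ into $K_n$ (chain-map property) and $I_n$ into $I_n$ (by continuity, via the intertwining $\Lambda(u_n) \Lambda(c_{n+1}) = \Lambda(c_{n+1}) \Lambda(u_{n+1})$). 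Then $\Lambda(u_n)|_{K_n}$ is injective with image of full von Neumann dimension in $K_n$, and since a closed $\caln(G)$-submodule of $K_n$ of full dimension equals $K_n$, this restriction is itself a weak isomorphism. With respect to the orthogonal splitting the matrix of $\Lambda(u_n)|_{K_n}$ is upper triangular with lower-right block the induced endomorphism of $H_n^{(2)}(\Lambda(C_*))$, and applying the continuous orthogonal projection $K_n \to H_n^{(2)}(\Lambda(C_*))$ to the identity $\overline{\operatorname{im} \Lambda(u_n)|_{K_n}} = K_n$ shows that this induced endomorphism has dense image. Combined with the vanishing established above, we conclude $H_n^{(2)}(\Lambda(C_*)) = 0$, as required.
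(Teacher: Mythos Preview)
Your proof is correct and follows essentially the same approach as the paper's: the same cyclic chain of implications, the same explicit weak chain contraction $(\gamma_n, u_n) = (c_{n+1}^*, \Delta_n)$ for $(1)\Rightarrow(2)$, and the same Hodge-theoretic identification $\ker \Lambda(\Delta_n) \cong H_n^{(2)}(\Lambda(C_*))$ for $(4)\Rightarrow(1)$. The only difference is cosmetic: where the paper dispatches $(3)\Rightarrow(4)$ and $(4)\Rightarrow(1)$ by citing standard lemmas from~\cite{Lueck(2002)} (in particular that a chain map which is a weak isomorphism on each chain module induces weak isomorphisms on reduced $L^2$-homology), you unpack those arguments by hand via the block-triangular/dimension argument, which is exactly the content of the cited results.
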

\begin{proof}~\eqref{lem:L2-acyclic_and_weak_chain_contractions:Laplace}
  $\implies$~\eqref{lem:L2-acyclic_and_weak_chain_contractions:weak_composite_zero} Define
  $\gamma_n \colon C_n \to C_{n+1}$ by $c_{n+1}^*$. Then 
  $\gamma_n \circ \gamma_{n-1} =   (c_{n+1})^* \circ c_n^* = (c_n \circ c_{n+1})^* = 0$.  
  Put $u_n = \Delta_n \colon C_n   \to C_n$. Then the collection of the $u_n$'s defines a $\IZ G$-chain map 
  $u_* \colon C_*   \to C_*$ such that $\gamma_*$ is a $\IZ G$-chain homotopy $u_*\simeq 0_*$, we have
  $\gamma_n \circ u_n = u_{n+1} \circ \gamma_n$  for $n \in \IZ$, and $\Lambda(u_n)$
  is a weak isomorphism for $n \in \IZ$.  
  \\[1mm]~\eqref{lem:L2-acyclic_and_weak_chain_contractions:weak_composite_zero} 
   $\implies$~\eqref{lem:L2-acyclic_and_weak_chain_contractions:weak} is obvious.
  \\[1mm]~\eqref{lem:L2-acyclic_and_weak_chain_contractions:weak} 
  $\implies $~\eqref{lem:L2-acyclic_and_weak_chain_contractions:L2-acyclic}
  Since $\Lambda(u_n)$ is a weak isomorphism for all $n \in \IZ$, the induced map
  $H_n^{(2)}(\Lambda(u_*)) \colon H_n^{(2)}(\Lambda(C_*)) \to H_n^{(2)}(\Lambda(C_*))$ is a weak isomorphism for all $n \in \IZ$
  by~\cite[Lemma~3.44 on page~149]{Lueck(2002)}.
  Since $\Lambda(u_*) $ is nullhomotopic, $H_n^{(2)}(\Lambda(u_*))$ is the zero map for all $n \in \IZ$. This implies that
  $H_n^{(2)}(\Lambda(C_*))$ vanishes for all $n \in \IZ$. Hence $\Lambda(C_*)$ is $L^2$-acyclic.
  \\[1mm]~\eqref{lem:L2-acyclic_and_weak_chain_contractions:L2-acyclic} 
   $\implies$~\eqref{lem:L2-acyclic_and_weak_chain_contractions:Laplace}
  This follows from~\cite[Lemma~1.13 on page~23 and Lemma~1.18 on page~24]{Lueck(2002)}
  since $\Lambda(\Delta_n)$ is the $n$-th Laplace operator of $\Lambda(C_*)$.
\end{proof}

\begin{remark}[Homomorphisms of finitely generated based free $\IZ G$-modules and $\widetilde{K}_1^w(\IZ G)$]
\label{rem:Endomorphisms_of_finitely_generated_based_free_ZG-modules_widetildeK_1_upper_w(ZG)}
Let $f \colon M \to N$ be a $\IZ G$-homomorphism of finitely generated based free $\IZ G$-modules
such that $\Lambda(f)$ is a weak isomorphism. Then we conclude from~\cite[Lemma~1.13 on page~23]{Lueck(2002)}
\[
\rk_{\IZ G}(M) = \dim_{\caln(G)}(\Lambda(M)) = \dim_{\caln(G)}(\Lambda(N)) = \rk_{\IZ G}(N).
\]
Hence we can choose a bijection between the two bases for $M$ and $N$. It induces a $\IZ G$-isomorphism
$b \colon N \to M$. Now define an element
\[
[f] := [b \circ f] \in \widetilde{K}_1^w(\IZ G).
\]
Since we are working in $\widetilde{K}_1^w(\IZ G)$, the choice of the bijection of the bases does not matter.
\end{remark}

\begin{definition}[Universal $L^2$-torsion for $L^2$-acyclic finite based free $\IZ G$-chain complexes]
\label{def:Universal_L2-torsion_for_L2-acyclic_finite_based_free_ZG-chain_complexes}
Let $C_*$ be a finite based free $\IZ G$-chain complex such that $\Lambda(C_*)$ is $L^2$-acyclic. Define its
\emph{universal $L^2$-torsion}
\[
\rho^{(2)}_u(C_*) \in \widetilde{K}^w_1(\IZ G)
\]
by
\[
\rho^{(2)}_u(C_*) = [(uc + \gamma)_{\odd}] - [u_{\odd}],
\]
where $(\gamma_*,u_*)$ is any weak chain contraction of $C_*$.
\end{definition}

We have to explain that this is well-defined.  The existence of a weak chain contraction
follows from Lemma~\ref{lem:L2-acyclic_and_weak_chain_contractions}. We have to show that
$\Lambda((uc + \gamma)_{\odd})$ and $\Lambda(u_{\odd})$ are weak isomorphisms and that the
definition is independent of the choice of  weak chain contraction.  Let $(\delta_*,v_*)$
be another weak chain contraction.  Define $\Theta_1 \colon C_{\ev} \to C_{\ev}$ by the
lower triangle matrix
\[
\Theta_1 := (v\circ u + \delta \circ \gamma)_{\ev} = 
\footnotesize{\begin{pmatrix} \ddots & \vdots & \vdots & \vdots & \ddots
\\ \ddots & vu & 0 & 0 & \ldots\\
\ldots & \delta \gamma & vu & 0 & \ldots\\
\ldots & 0 & \delta \gamma & vu & \ldots\\
\ddots & \vdots & \vdots & \ddots & \ddots 
\end{pmatrix}}
\]
Then the composite
\[
\Theta_2 \colon C_{\odd} \xrightarrow{(uc + \gamma)_{\odd}} C_{\ev}
\xrightarrow{\Theta_1} C_{\ev} \xrightarrow{(vc + \delta)_{\ev}} C_{\odd}
\]
is given by the lower triangle matrix
\[
\footnotesize{\left( \begin{array}{ccccc} \ddots & \vdots & \vdots & \vdots & \ddots
\\ \ldots & (v^2u^2)_{2n-1} & 0 & 0 & \ldots\\
\ldots & * & (v^2u^2)_{2n+1} & 0 & \ldots\\
\ldots & * & * & (v^2u^2)_{2n+3} & \ldots\\
\ddots & \vdots & \vdots & \vdots & \ddots \end{array} \right).}
\]
We conclude from~\cite[Lemma~3.37 on page~144]{Lueck(2002)} that $\Lambda(\Theta_1)$ 
and $\Lambda(\Theta_2)$ are weak isomorphisms   and we have
\[
\Theta_2 = (vc + \delta)_{\ev} \circ \Theta_1 \circ (uc + \gamma)_{\odd}.
\]
Analogously we find $\IZ G$-homomorphisms $\Theta_3$ and $\Theta_4$ such that
$\Lambda(\Theta_3)$ and $\Lambda(\Theta_4)$ are weak isomorphism and we have
\[
\Theta_4 = (vc+\delta)_{\odd} \circ \Theta_3 \circ (uc+\gamma)_{\ev}.
\]
Since we can interchange the roles of $(\gamma_*,u_*)$ and
$(\delta_*,v_*)$, we can find $\IZ G$-homo\-mor\-phisms $\Theta_i $ for $i = 5,6,7,8$ such that
$\Lambda(\Theta_i)$ is a weak isomorphism  and we have
\begin{eqnarray*}
\Theta_6 
& = & 
(uc + \gamma)_{\ev} \circ \Theta_5 \circ (vc + \delta)_{\odd};
\\
\Theta_8 
& = &
(uc+\gamma)_{\odd} \circ \Theta_7 \circ (vc+\delta)_{\ev}.
\end{eqnarray*}
We conclude that $\Lambda((uc + \gamma)_{\odd})$, $\Lambda(uc + \gamma)_{\ev})$, 
$\Lambda((vc +\delta)_{\odd})$,  and $\Lambda((vc + \delta)_{\ev})$ are weak isomorphisms. So we get well-defined
elements $[(uc + \gamma)_{\odd}]$, $[u_{\odd}]$, $[(vc + \delta)_{\ev}]$, and $[v_{\ev}]$ in $\widetilde{K}_1^w\IZ G)$.  
We have $u_{\ev} \circ (uc + \gamma)_{\odd} = (uc + \gamma)_{\odd} \circ u_{\odd}$ and
$v_{\ev} \circ (vc + \delta)_{\odd} = (vc + \delta)_{\odd} \circ v_{\odd}$. This implies
\[
{[u_{\odd}]}
 \,\, = \,\, 
{[u_{\ev}]}\quad \mbox{ and }\quad 
{[v_{\odd}]}
 \,\, = \,\,
{[v_{\ev}].}\]
Since 
\begin{eqnarray*}
{[\Theta_1] } 
& = &
 {[v_{\ev}]} + {[u_{\ev}]};
\\
{[\Theta_2]}  
& = & 
2 \cdot {[v_{\odd}]} + 2 \cdot {[u_{\odd}]};
\\
{[\Theta_2]}
& = & 
{[(vc + \delta)_{\ev}]} + {[\Theta_1]} +{[(uc + \gamma)_{\odd}]},
\end{eqnarray*}
hold in $\widetilde{K}^w_1(\IZ G)$, we get in $\widetilde{K}^w_1(\IZ G)$
\begin{eqnarray}
 [(uc + \gamma)_{\odd}] - [u_{\odd}]  
& = & 
- [(vc + \delta)_{\ev}] + [v_{\ev}].
\label{chain_contractions_in_K_1:(uc_plus_gamma}
\end{eqnarray}
Since the right hand side of the equation~\eqref{chain_contractions_in_K_1:(uc_plus_gamma}
above is independent of $(\gamma_*,u_*)$, we conclude that 
Definition~\ref{def:Universal_L2-torsion_for_L2-acyclic_finite_based_free_ZG-chain_complexes}
makes sense.

We call an exact  short sequence $0 \to M_0 \xrightarrow{i} M_1 \xrightarrow{p} M_2 \to 0$
of finitely generated based free $\IZ G$-modules \emph{based exact} if $i(B_0) \subseteq B_1$ holds and $p$ maps
$B_1 \setminus i(B_0)$ bijectively onto $B_2$ for the given $\IZ G$-basis $B_i \subseteq M_i$ for $i = 0,1,2$.
This extends in the obvious way to $\IZ G$-chain complexes.

\begin {lemma}
\label{lem:Additivity_of_universal_L2-torsion}
Let $0 \to C_* \xrightarrow{i_*}  D_* \xrightarrow{p_*} E_* \to 0$
be a based exact short sequence of finite based free  $\IZ G $-chain complexes.
Suppose that  two  of the finite Hilbert $\caln(G)$-chain complexes $\Lambda(C_*)$, $\Lambda(D_*)$  and $\Lambda(E_*)$
 are $L^2$-acyclic. Then all three are $L^2$-acyclic and we get in  $\widetilde{K}_1^w(\IZ G)$
\[
\rho^{(2)}_u(D_*) = \rho^{(2)}_u(C_*) + \rho^{(2)}_u(E_*).
\]
\end{lemma}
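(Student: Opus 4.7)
The plan is to mimic the classical additivity proof for Whitehead torsion, lifted to the weak setting. The based exactness gives a splitting $D_n = C_n \oplus E_n$ that respects the given $\IZ G$-bases, so the differential of $D_*$ takes the block-upper-triangular form $d_{D,n} = \begin{pmatrix} c_n & \alpha_n \\ 0 & e_n \end{pmatrix}$ with $\alpha_n \colon E_n \to C_{n-1}$ satisfying $c_{n-1}\alpha_n + \alpha_{n-1}e_n = 0$ as a consequence of $d_D^2 = 0$. Since $\Lambda$ preserves direct sums, the sequence $0\to\Lambda(C_*)\to\Lambda(D_*)\to\Lambda(E_*)\to 0$ is short exact and splits in each degree. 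The associated long weakly exact sequence in $L^2$-homology then immediately yields the two-out-of-three statement for $L^2$-acyclicity.

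The key step is to build a compatible weak chain contraction $(\gamma^D_*, u^D_*)$ of $D_*$ from weak chain contractions $(\gamma^C_*, u^C_*)$ of $C_*$ and $(\gamma^E_*, u^E_*)$ of $E_*$. Using Lemma~\ref{lem:L2-acyclic_and_weak_chain_contractions}\eqref{lem:L2-acyclic_and_weak_chain_contractions:weak_composite_zero} I may arrange that $\gamma^C\circ\gamma^C = 0$ and $\gamma^E\circ\gamma^E = 0$. I then try the block-upper-triangular Ansatz
\[
u^D_n = \begin{pmatrix} u^C_n & \beta_n \\ 0 & u^E_n \end{pmatrix}, \qquad \gamma^D_n = \begin{pmatrix} \gamma^C_n & 0 \\ 0 & \gamma^E_n \end{pmatrix}.
\]
Unwinding the chain-map condition for $u^D_*$ reduces to $c_n\beta_n - \beta_{n-1}e_n = u^C_{n-1}\alpha_n - \alpha_n u^E_n$, which is solved by the explicit choice $\beta_n = \gamma^C_{n-1}\alpha_n + \alpha_{n+1}\gamma^E_n$ using the identities $c\alpha + \alpha e = 0$, $c\gamma^C + \gamma^C c = u^C$, and $e\gamma^E + \gamma^E e = u^E$. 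With this $\beta$, the chain-homotopy relation $d_D\gamma^D + \gamma^D d_D = u^D$ reduces in the off-diagonal block to exactly $\beta = \gamma^C\alpha + \alpha\gamma^E$ and hence needs no further correction term. The commutation $\gamma^D\circ u^D = u^D\circ\gamma^D$ reduces to $\gamma^C_n\gamma^C_{n-1}\alpha_n = \alpha_{n+2}\gamma^E_{n+1}\gamma^E_n$, both sides of which vanish by the composite-zero assumption. The block-upper-triangular form of $\Lambda(u^D_n)$ with weak isomorphisms on the diagonal gives via \cite[Lemma~3.37 on page~144]{Lueck(2002)} that $\Lambda(u^D_n)$ is itself a weak isomorphism.

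Using $(\gamma^D_*, u^D_*)$ to compute $\rho^{(2)}_u(D_*) = [(u^D d_D + \gamma^D)_{\odd}] - [u^D_{\odd}]$, I observe that both $u^D_{\odd}\colon D_{\odd}\to D_{\odd}$ and $(u^D d_D + \gamma^D)_{\odd}\colon D_{\odd}\to D_{\ev}$ are block upper triangular with respect to the decompositions $D_{\odd} = C_{\odd}\oplus E_{\odd}$ and $D_{\ev} = C_{\ev}\oplus E_{\ev}$, with diagonal blocks given by the corresponding maps for $C_*$ and $E_*$. The block-upper-triangular relation built into Definition~\ref{def:K_1_upper_w(ZG)} (applied via Remark~\ref{rem:Endomorphisms_of_finitely_generated_based_free_ZG-modules_widetildeK_1_upper_w(ZG)} to compare $D_{\odd}$ and $D_{\ev}$) then gives
\[
[u^D_{\odd}] = [u^C_{\odd}] + [u^E_{\odd}] \quad \text{and} \quad [(u^D d_D + \gamma^D)_{\odd}] = [(u^C c + \gamma^C)_{\odd}] + [(u^E e + \gamma^E)_{\odd}]
\]
in $\widetilde{K}_1^w(\IZ G)$, and subtraction yields the additivity formula. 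I expect the main obstacle to be the second step: arranging the weak chain contraction of $D_*$ so that the chain-map, chain-homotopy, and commutation conditions all hold simultaneously in block upper-triangular form. The sign identity $c\alpha + \alpha e = 0$ makes the natural choice $\beta = \gamma^C\alpha + \alpha\gamma^E$ work, and the commutation condition then forces the use of the composite-zero form of the chain contractions on $C_*$ and $E_*$.
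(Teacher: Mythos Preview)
Your proof is correct and follows essentially the same approach as the paper. The paper's proof also chooses composite-zero weak chain contractions on $C_*$ and $E_*$, builds the block-diagonal homotopy $\delta$ on $D_*$ (your $\gamma^D$), and then \emph{defines} $v := d_D\delta + \delta d_D$, which is exactly your $u^D$ with $\beta = \gamma^C\alpha + \alpha\gamma^E$; defining $v$ this way makes both the chain-homotopy and chain-map conditions automatic (the latter from $d_D^2 = 0$), so only the commutation $\delta v = v\delta$ and the weak-isomorphism property remain to be checked, after which the based exactness yields the block-triangular decomposition in $\widetilde{K}_1^w(\IZ G)$ just as you describe.
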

\begin{proof}
 All three finite Hilbert $\caln(G)$-chain complexes $\Lambda(C_*)$, $\Lambda(D_*)$  and $\Lambda(E_*)$
are $L^2$-acyclic by~\cite[Theorem~1.21 on page~27]{Lueck(2002)}.   Since $E_n$ is free
and $0 \to C_n \to D_n \to E_n$ is exact, we can choose  $\IZ G$-homomorphisms 
$r_n \colon D_n \to C_n$ and $s_n \colon E_n \to D_n$
satisfying $r_n \circ s_n = 0$, $r_n \circ i_n = \id_{C_n}$ and $p_n \circ s_n =
  \id_{E_n}$ for  each $n\in \IZ$.  Because of Lemma~\ref{lem:L2-acyclic_and_weak_chain_contractions}
 we can choose weak chain contractions $(\gamma_*,u_*)$ for $C_*$ and
  $(\epsilon_*,w_*)$ for $E_*$ such that $\gamma_{n+1}   \circ \gamma_n = 0$ and $\epsilon_{n+1} \circ \epsilon_n = 0$ 
  holds for all $n \in \IZ$.   Define
\[
\delta_n := i_{n+1} \circ  \gamma_n \circ r_n  + s_{n+1} \circ \epsilon_n \circ p_n \colon D_n \to D_{n+1}.
\]
We compute
\begin{eqnarray*}
\delta \circ \delta 
& = & 
(i \circ  \gamma \circ r  + s \circ \epsilon \circ p) \circ
(i \circ  \gamma \circ r  + s \circ \epsilon \circ p)
\\
& = &
i \circ  \gamma \circ r \circ i \circ  \gamma \circ r 
+ i \circ  \gamma \circ r \circ s \circ \epsilon \circ p
\\ & & 
\quad \quad \quad + s \circ \epsilon \circ p \circ i \circ  \gamma \circ r
+ s \circ \epsilon \circ p \circ s \circ \epsilon \circ p
\\
& = &
i \circ  \gamma \circ  \id \circ \gamma \circ r 
+ i \circ  \gamma \circ 0 \circ \epsilon \circ p
+ s \circ \epsilon \circ 0 \circ  \gamma \circ r
+ s \circ \epsilon \circ \id \circ \epsilon \circ p
\\
& = & 
i \circ  \gamma^2 \circ r 
+ s \circ \epsilon^2 \circ p
\,\, = \,\,
i \circ  0\circ r 
+ s \circ 0 \circ p
\,\,=\,\, 0.
\end{eqnarray*}

Put $v_n := d_{n+1} \circ \delta_n + \delta_{n-1} \circ d_n$.
One easily checks $v_n \circ \delta_{n-1} = \delta_{n-1} \circ v_{n-1}$ for every $n \in \IZ$.
The following diagrams commute

\[
\xymatrix@R0.5cm{
0 \ar[r] 
&
C_n \ar[r]^{i_n} \ar[d]^{\gamma_n}
& 
D_n \ar[r]^{p_n} \ar[d]^{\delta_n}
& 
E_n \ar[r] \ar[d]^{\epsilon_n}
& 
0
\\
0 \ar[r] 
&
C_{n+1} \ar[r]^{i_{n+1}}
& 
D_{n+1} \ar[r]^{p_{n+1}}
& 
E_{n+1} \ar[r]
& 
0
}
\]

\[
\xymatrix@R0.5cm{
0 \ar[r] 
&
C_n \ar[r]^{i_n} \ar[d]^{u_n}
& 
D_n \ar[r]^{p_n} \ar[d]^{v_n}
& 
E_n \ar[r] \ar[d]^{w_n}
& 
0
\\
0 \ar[r] 
&
C_n \ar[r]^{i_n}
& 
D_n \ar[r]^{p_n}
& 
E_n \ar[r]
& 
0
}
\]
and
\[
\xymatrix@R0.75cm@C1.2cm{
0 \ar[r] 
&
C_{\odd} \ar[r]^{i_{\odd}} \ar[d]^{(uc + \gamma)_{\odd}}
& 
D_{\odd} \ar[r]^{p_{\odd}} \ar[d]^{(vd + \delta)_{\odd}}
& 
E_{\odd} \ar[r] \ar[d]^{(we + \epsilon)_{\odd}}
& 
0
\\
0 \ar[r] 
&
C_{\odd} \ar[r]^{i_{\odd}}
& 
D_{\odd} \ar[r]^{p_{\odd}}
& 
E_{\odd} \ar[r]
& 
0.
}
\]
Since $\Lambda(u_n)$ and $\Lambda(w_n)$ are weak isomorphisms, the same is true for $\Lambda(v_n)$
by~\cite[Lemma~3.37 on page~144]{Lueck(2002)}.
Hence $(\delta_*,v_*)$ is a weak chain contraction for $D_*$.
Since each of the short exact sequences $0 \to C_n \xrightarrow{i_n} D_n \xrightarrow{p_n} E_n \to 0$
is based exact by assumption, we get in $\widetilde{K}^w_1(\IZ G)$
\begin{eqnarray*}
[(vd + \delta)_{\odd}] 
& = & 
[(uc + \gamma)_{\odd}] + [(we + \epsilon)_{\odd}]; 
\\
{[v_{\odd}]}
& = & 
[u_{\odd}] + [w_{\odd}];
\\
 \rho^{(2)}_u(D_*) 
& = & 
\rho^{(2)}_u(C_*) + \rho^{(2)}_u(E_*).
\end{eqnarray*}
This finishes the proof of Lemma~\ref{lem:Additivity_of_universal_L2-torsion}.
\end{proof}

 Let $f \colon(C_*,c_*) \to (D_*,d_*)$ be a $\IZ G$-chain map of 
  $\IZ G$-chain complexes. We denote by $\cone(f_*)$  the \emph{cone of $f$}, this is the chain complex  which is given by the modules $C_{n+1}\oplus D_n$ and where the differential from $C_{n}\oplus D_{n+1}\to C_{n-1}\oplus D_{n}$ is given by
 \[ \begin{pmatrix} -c_{n} &0 \\ f_{n}& d_{n+1}\end{pmatrix}.\]
 Furthermore the \emph{supension} of a chain complex $(C_*,c_*)$ is defined as the  chain complex where the $n$-th boundary map is given by $C_{n-1}\xrightarrow{-c_{n-1}}C_{n-2}$.

\begin{lemma} \label{lem:universal_L2-torsion_and_chain_homotopy_equivalences}
  Let $f \colon C_* \to D_*$ be a $\IZ G$-chain homotopy equivalence of finite based free
  $\IZ G$-chain complexes.  Denote by $\rho(\cone(f_*)) \in \widetilde{K}_1(\IZ G)$ the classical
  torsion of the finite based free contractible $\IZ G$-chain complex $\cone(f_*)$.  Suppose
  that $\Lambda(C_*)$ or $\Lambda(D_*)$ is $L^2$-acyclic.
  Then both $\Lambda(C_*)$ and $\Lambda(D_*)$ are $L^2$-acyclic, and we get in
  $\widetilde{K}^w_1(\IZ G)$
  \[
  \rho^{(2)}_u(D_*) - \rho^{(2)}_u(C_*) = \zeta(\rho(\cone(f_*)))
  \]
  for  the canonical homomorphism $\zeta \colon \widetilde{K}_1(\IZ G) \to \widetilde{K}^w_1(\IZ G)$.
\end{lemma}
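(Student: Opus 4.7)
The plan is to apply the additivity Lemma~\ref{lem:Additivity_of_universal_L2-torsion} to the canonical based short exact sequence of finite based free $\IZ G$-chain complexes
\[
0 \longrightarrow D_* \longrightarrow \cone(f_*) \longrightarrow \Sigma C_* \longrightarrow 0,
\]
where the cone carries the obvious basis coming from those of $D_*$ and $C_*$. Since $f$ is a $\IZ G$-chain homotopy equivalence, $\cone(f_*)$ is $\IZ G$-contractible, so in particular $\Lambda(\cone(f_*))$ is $L^2$-acyclic. Moreover $\Lambda(\Sigma C_*)$ is $L^2$-acyclic if and only if $\Lambda(C_*)$ is, since suspension only reindexes the $L^2$-homology. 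Hence the hypothesis supplies two of the three $L^2$-acyclicity conditions in the displayed sequence, and Lemma~\ref{lem:Additivity_of_universal_L2-torsion} yields both the third and
\[
\rho^{(2)}_u(\cone(f_*)) \,=\, \rho^{(2)}_u(D_*) + \rho^{(2)}_u(\Sigma C_*).
\]

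It remains to identify the two summands. For the cone, which is $\IZ G$-contractible, I take a genuine $\IZ G$-chain contraction $\gamma_*$; then $(\gamma_*,\id)$ is a weak chain contraction, and Definition~\ref{def:Universal_L2-torsion_for_L2-acyclic_finite_based_free_ZG-chain_complexes} reduces to $\rho^{(2)}_u(\cone(f_*)) = [(c+\gamma)_{\odd}] - [\id] = [(c+\gamma)_{\odd}]$, using that $[\id] = 0$ in $\widetilde{K}_1^w(\IZ G)$ by the composition relation. This is exactly $\zeta(\rho(\cone(f_*)))$.

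The main obstacle is the suspension identity $\rho^{(2)}_u(\Sigma C_*) = -\rho^{(2)}_u(C_*)$ in $\widetilde{K}_1^w(\IZ G)$. To prove it, I start from a weak chain contraction $(\gamma_*,u_*)$ of $C_*$ with $\gamma \circ \gamma = 0$, which exists by Lemma~\ref{lem:L2-acyclic_and_weak_chain_contractions}, and check that $u'_n := u_{n-1}$, $\gamma'_n := -\gamma_{n-1}$ defines a weak chain contraction of $\Sigma C_*$. Because $[-\id]=0$ in $\widetilde{K}_1^w(\IZ G)$, the signs introduced by suspension wash out, giving $[u'_{\odd}] = [u_{\ev}]$ and $[(u'c'+\gamma')_{\odd}] = [(uc+\gamma)_{\ev}]$. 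The key algebraic input is the identity $(uc+\gamma)^2 = u^2$, which follows from $c^2=0$, $c\gamma+\gamma c = u$, $\gamma^2 = 0$ and the commutation $\gamma u = u\gamma$. Restricted to $C_{\odd}$ this reads $(uc+\gamma)_{\ev}\circ (uc+\gamma)_{\odd} = u_{\odd}^2$, whence
\[
[(uc+\gamma)_{\odd}] + [(uc+\gamma)_{\ev}] \,=\, 2\,[u_{\odd}] \quad \text{in } \widetilde{K}_1^w(\IZ G).
\]
Combining this with $[u_{\odd}] = [u_{\ev}]$, which is already verified in the discussion following Definition~\ref{def:Universal_L2-torsion_for_L2-acyclic_finite_based_free_ZG-chain_complexes}, a short subtraction yields the suspension identity. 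Substituting back into the additivity formula completes the proof.
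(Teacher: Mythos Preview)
Your proof is correct and follows essentially the same route as the paper: the based short exact sequence $0 \to D_* \to \cone(f_*) \to \Sigma C_* \to 0$, the additivity Lemma~\ref{lem:Additivity_of_universal_L2-torsion}, the identification $\rho^{(2)}_u(\cone(f_*)) = \zeta(\rho(\cone(f_*)))$ via the pair $(\gamma_*,\id)$, and the suspension identity $\rho^{(2)}_u(\Sigma C_*) = -\rho^{(2)}_u(C_*)$. The paper records the last point as an ``obvious fact'' without proof, whereas you supply a clean argument via $(uc+\gamma)^2 = u^2$; note that this identity is also a special case of equation~\eqref{chain_contractions_in_K_1:(uc_plus_gamma} with $(\delta_*,v_*) = (\gamma_*,u_*)$, which would let you shorten that paragraph.
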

\begin{proof} Since $f_*$ is a $\IZ G$-chain homotopy equivalence, $\cone(f_*)$ is a
  finite based free contractible $\IZ G$-chain complex. In particular $\Lambda(\cone(f_*))$
  is $L^2$-acyclic. One easily checks that 
  $\zeta \colon \widetilde{K}_1(\IZ G) \to   \widetilde{K}^w_1(\IZ G)$ 
  maps $\rho(\cone(f_*))$ to $\rho^{(2)}_u(\cone(f_*))$ since a
  chain contraction $\gamma_*$ for $\cone(f_*)$ defines a weak chain contraction
  $(\gamma_*,\id_{\cone(f_*)})$ for $\cone(f_*)$. Now apply
  Lemma~\ref{lem:Additivity_of_universal_L2-torsion} to the obvious based exact short
  sequences of finite based free $\IZ G$-chain complexes 
  $0 \to D_* \to \cone(f_*)   \to \Sigma C_* \to 0$ and use the obvious fact that $\Lambda(\Sigma C_*)$ is
  $L^2$-acyclic, if and only if $\Lambda(C_*)$ is $L^2$-acyclic, and in this case 
  $\rho^{(2)}_u(\Sigma     C_*) = -\rho^{(2)}_u(C_*)$ holds.
  \end{proof}

\subsection{The $K$-group $K_1^{w, \ch}(\IZ G)$}
\label{subsec:K_1_upper_w,ch(ZG)}

\begin{definition}[$\widetilde{K}_1^{w,\ch}(\IZ G)$ and $\widetilde{K}_1^{\ch}(\IZ G)$ ]
  \label{def:widetilde(K)_1_upper_w_ch_(omega)_and_omega-torsion}
  Let $\widetilde{K}_1^{w, \ch }(\IZ G)$ be the abelian group defined in terms of
  generators and relations as follows.  Generators $[C_*]$ are given by 
  (basis preserving isomorphism classes of)  finite based free 
  $\IZ G$-chain complexes $C_*$ such that $\Lambda(C_*)$ is $L^2$-acyclic.   Whenever we have
  a short based exact sequence of finite based free   $\IZ G$-chain
  complexes $0 \to C_* \to D_* \to E_* \to 0$ such that two (and hence all) of the finite Hilbert $\caln(G)$-chain 
  complexes $\Lambda(C_*)$, $\Lambda(D_*)$, and $\Lambda(E_*)$ are $L^2$-acyclic, we require the relation
  \[ 
  [D_*] = [C_*] + [E_*].
 \]
  We also require that for any $n$ we have
  \[
  \big[0\, \to\,\IZ G\,\xrightarrow{\id_{\IZ G}}\,  \IZ G\, \to \,0\big] \,=\,  \big[0\, \to\,\IZ G\,\xrightarrow{-\id_{\IZ G}}\,  \IZ G\, \to \,0\big]\, = \,0,
  \]
  where the two non-zero terms lie in dimension $n$ and $n+1$.
  Let $\widetilde{K}_1^{\ch }(\IZ G)$ be the abelian group defined analogously, where we
   replace everywhere the condition that $\Lambda(C_*)$ is $L^2$-acyclic by the stronger
  condition that $C_*$ is contractible as $\IZ G$-chain complex.
  \end{definition}

\begin{theorem}[Chain complexes versus automorphisms]
\label{the:chain_complexes_versus_automorphisms}
The classical torsion and the universal $L^2$-torsion induce isomorphisms such that
the following diagram commutes, where the horizontal maps  are the obvious forgetful homomorphisms.
\[
\xymatrix@R0.5cm{\widetilde{K}_1^{\ch}(\IZ G) \ar[r]^-{\zeta} \ar[d]_{\rho}^{\cong}
& 
\widetilde{K}_1^{w, \ch}(\IZ G) \ar[d]^{\rho^{(2)}_u}_{\cong}
\\
\widetilde{K}_1(\IZ G) \ar[r]_{\zeta} 
& 
\widetilde{K}_1^w(\IZ G).
}
\]
The inverses of $\rho$ and $\rho^{(2)}_u$ are given by the obvious maps regarding a
homomorphism of finitely generated based free $\IZ G$-modules as a $1$-dimensional finite
based free $\IZ G$-chain complex.
\end{theorem}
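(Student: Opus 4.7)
The plan is to show both vertical arrows are isomorphisms by constructing explicit inverses, and to verify commutativity of the square directly. The commutativity is immediate from the definitions: for a contractible chain complex $C_*$, any classical chain contraction $\gamma_*$ is the special case $(\gamma_*, \id_{C_*})$ of a weak chain contraction, and plugging $u_*=\id$ into the formula in Definition~\ref{def:Universal_L2-torsion_for_L2-acyclic_finite_based_free_ZG-chain_complexes} recovers $[(c+\gamma)_{\odd}] = \rho(C_*)$ as in~\eqref{classical_torsion}. The classical assertion that $\rho\colon \widetilde{K}_1^{\ch}(\IZ G)\to \widetilde{K}_1(\IZ G)$ is an isomorphism is standard, so the substantive content is to prove the analogous statement for $\rho^{(2)}_u$.

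To that end, define a candidate inverse $\sigma\colon \widetilde{K}_1^w(\IZ G)\to \widetilde{K}_1^{w,\ch}(\IZ G)$ that sends a generator $[f]$, for $f\colon\IZ G^n\to\IZ G^n$ with $\Lambda(f)$ a weak isomorphism, to the class $[D(f)]$ of the two-term based chain complex $0\to \IZ G^n \xrightarrow{f} \IZ G^n\to 0$ concentrated in fixed degrees $1$ and $0$. This complex is $L^2$-acyclic since $\Lambda(f)$ is injective with dense image, so $[D(f)]$ is a legitimate generator of $\widetilde{K}_1^{w,\ch}(\IZ G)$. To verify well-definedness of $\sigma$ one checks each defining relation of $\widetilde{K}_1^w(\IZ G)$: the block-triangular relation is realized by the obvious based short exact sequence $0\to D(f_0)\to D(f)\to D(f_2)\to 0$; the composition relation $\sigma([f_2\circ f_1])=\sigma([f_1])+\sigma([f_2])$ follows by comparing two different based filtrations on a standard three-term auxiliary chain complex built out of $f_1$ and $f_2$, modulo normalization pieces of the form $0\to \IZ G\xrightarrow{\pm\id}\IZ G\to 0$ whose classes vanish in $\widetilde{K}_1^{w,\ch}(\IZ G)$ by its defining relations; and $[-\id_{\IZ G}]\mapsto 0$ is again built in.

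For mutual inverseness, the composition $\rho^{(2)}_u\circ \sigma$ is handled by direct computation on $D(f)$: using the Laplacian weak chain contraction furnished by Lemma~\ref{lem:L2-acyclic_and_weak_chain_contractions} with $\gamma_0=f^*$, $u_0=ff^*$ and $u_1=f^*f$, the formula defining $\rho^{(2)}_u(D(f))$ evaluates to $[ff^*f] - [f^*f]$, which equals $[f]$ in $\widetilde{K}_1^w(\IZ G)$ by the composition relation. The other direction $\sigma\circ \rho^{(2)}_u=\id$ is the main technical obstacle: one cannot simply truncate $C_*$, because truncations of an $L^2$-acyclic complex need not themselves be $L^2$-acyclic. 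Instead I would exploit the weak chain contraction $(\gamma_*,u_*)$ of $C_*$ directly by constructing a based filtration whose associated graded computes $[D((uc+\gamma)_{\odd})] - [D(u_{\odd})]$ in $\widetilde{K}_1^{w,\ch}(\IZ G)$; the block-triangular structure of $(uc+\gamma)_{\odd}$ and of $u_{\odd}$ as matrices assembled from $c_*$, $u_*$ and $\gamma_*$ is crucial, and the argument proceeds in parallel to the analogous classical computation underlying the classical isomorphism $\widetilde{K}_1^{\ch}(\IZ G)\cong \widetilde{K}_1(\IZ G)$.
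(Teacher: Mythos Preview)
Your setup matches the paper's: the inverse map $\sigma$ is the paper's $\el$, the verification of the two defining relations of $\widetilde{K}_1^w(\IZ G)$ proceeds via the same based exact sequences and mapping-cone manipulations, and the computation $\rho^{(2)}_u\circ\sigma=\id$ on two-term complexes is correct (the paper asserts this step without writing out the Laplacian computation you give, but yours is fine).

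The divergence is in the hard direction. You propose to prove $\sigma\circ\rho^{(2)}_u=\id$ by producing a based filtration of $C_*$ whose associated graded yields $[D((uc+\gamma)_{\odd})]-[D(u_{\odd})]$, but you never say what the filtration is, and the closing sentence (``proceeds in parallel to the analogous classical computation'') is a placeholder, not an argument. The obstacle you correctly flag---that truncations of $C_*$ need not be $L^2$-acyclic---applies equally to naive filtrations by degree, so the burden is precisely to exhibit a filtration that stays inside the $L^2$-acyclic world, and you have not done that.

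The paper avoids this issue by a change of viewpoint: once $\rho^{(2)}_u\circ\el=\id$ is known, it suffices to show that $\el$ is \emph{surjective}. This is done by induction on the length of $C_*$. One maps the two-term complex $\el(c_1\circ\gamma_0\colon C_0\to C_0)$ into $C_*$ via $(\gamma_0,\id_{C_0})$, passes to the mapping cone, and then quotients by an $\el(\id_{C_0})$ subcomplex; the result is $L^2$-acyclic of strictly smaller length, and the difference $[C_*]-[\text{result}]$ already lies in the image of $\el$. This never requires identifying $[C_*]$ with the specific element $[D((uc+\gamma)_{\odd})]-[D(u_{\odd})]$ inside $\widetilde{K}_1^{w,\ch}(\IZ G)$, which is why it is cleaner than what you sketch.
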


\begin{proof}
  The map $\rho^{(2)}_u \colon \widetilde{K}_1^{w, \ch}(\IZ G) \to \widetilde{K}_1^w(\IZ
  G)$ is well-defined by Lemma~\ref{lem:Additivity_of_universal_L2-torsion}.  One easily
  checks that the diagram appearing in
  Theorem~\ref{the:chain_complexes_versus_automorphisms} commutes. It remains to show that
  the two vertical maps are isomorphisms. We start out with the left vertical map.

Next we define a homomorphism
\begin{eqnarray}
\el \colon \widetilde{K}_1^w(\IZ G) & \xrightarrow{\cong} & \widetilde{K}_1^{w,\ch}(\IZ G).
\label{el:K_1_to_K_s_ch}
\end{eqnarray} 
Consider a $\IZ G$-endomorphism $f \colon \IZ G^n \to \IZ G^n$ 
such that $\Lambda(f)$ is a weak isomorphism.  We want to define
\begin{eqnarray}
\el([f]) := [\el(f \colon \IZ G^n \to \IZ G^n)].
\label{homo_el}
\end{eqnarray}
where $\el(f \colon \IZ G^n \to \IZ G^n)$ is the $1$-dimensional finite based free 
$\IZ G$-chain complex whose first differential is $f$. Since $\Lambda(f)$ is a weak
isomorphism, $\Lambda\bigl(\el(f \colon \IZ G^n \to \IZ G^n)\bigr)$ is $L^2$-acyclic and
defines an element in $[\el(f \colon\IZ G^n \to \IZ G^n)]$ in $K_1^{w,\ch}(\IZ G)$. We
have to check that the relations in $K_1^w(\IZ G)$ are satisfied.  This follows for the
additivity relation directly from Lemma~\ref{lem:Additivity_of_universal_L2-torsion}.  It
remains to show for $\IZ G$-endomorphisms $f_1,f_2 \colon \IZ G^n \to \IZ G^n$ such that
$\Lambda(f_1)$ and $\Lambda(f_2)$ are weak isomorphisms that we get in
$\widetilde{K}_1^{w,\ch}( \IZ G)$
\begin{eqnarray*}
{[\el(f_2 \circ f_1)]} 
&= & 
{[\el(f_1)]} + {[\el(f_2)]}.
\end{eqnarray*}
Consider the chain map
\[
h_* \colon \Sigma^{-1} \el(f_2) \to  \el(f_1)
\]
given by $h_1 = \id_{\IZ G^n}$ and $h_i = 0$ for $i \not=1$. We conclude from
the short based exact sequence of finite based free $\IZ G$-chain complexes 
$0 \to \el(f_1) \to \cone(h_*) \to \el(f_2) \to 0$ 
that we get in 
$\widetilde{K}_1^{w, \ch}(\IZ G)$
\[
[\cone(h_*)] = [\el(f_1)] +[\el(f_2)].
\]
There is also a based exact short sequence of finite based free
$\IZ G$-chain complexes 
$0 \to \el(f_2 \circ f_1) \xrightarrow{j} \cone(h_*) \to \el(\id_{\IZ G^n}) \to 0$, 
given by
\[
\xymatrix@!C= 3em{
0 \ar[r]
&
\IZ G^n \ar[rr]^-{\footnotesize{\begin{pmatrix} f_1 \\ - \id_{\IZ G^n} \end{pmatrix}}} \ar[d]^{f_2 \circ f_1} 
&&
\IZ G^n \oplus \IZ G^n \ar[rr]^-{\footnotesize{\begin{pmatrix} \id_{\IZ G^n} & f_1 \end{pmatrix}}} 
\ar[d]^{\footnotesize{\begin{pmatrix} f_2  & 0 \\ \id_{\IZ G^n} & f_1 \end{pmatrix}}}
&&
\IZ G^n \ar[r] \ar[d]^{\id_{\IZ G^n}} 
& 0
\\
0 \ar[r]
&
\IZ G^n \ar[rr]_-{\footnotesize{\begin{pmatrix} \id_{\IZ G^n} \\ 0 \end{pmatrix}}}
 &&
\IZ G^n \oplus \IZ G^n \ar[rr]_-{\footnotesize{\begin{pmatrix} 0 & \id_{\IZ G^n} \end{pmatrix}}} 
&& \IZ G^n \ar[r] 
&
0.
}
\]
It yields the equation in $\widetilde{K}_1^{w,\ch}(\IZ G)$
\[
[\cone(h_*)] =  [\el(f_2 \circ f_1)] + [\el(\id_{\IZ G^n})].
\]
This implies
\begin{eqnarray}
\hspace{0.65cm}[\el(f_2 \circ f_1)] 
& = & 
[\cone(h_*)] -  [\el(\id_{\IZ G^n})]
\label{el:K_1_toK_s_ch_composition}
\\
& = & 
[\el(f_1] +[\el(f_2)] - [\el(\id_{\IZ G^n})]
\nonumber
\\
& = & 
[\el(f_1)] +[\el(f_2)] - n \cdot [\el(\id_{\IZ G})]
\nonumber
\,\, = \,\,
[\el(f_1)] +[\el(f_2)].
\nonumber 
 \end{eqnarray}
This finishes the proof that the homomorphism $\el$ announced in~\eqref{homo_el} is well-defined.

One easily checks that $\rho^{(2)}_u \circ \el \colon K_1^w(\IZ G) \to K_1^w(\IZ G)$ is the identity. 
In order to  show that the homomorphisms $\rho^{(2)}_u$ and $\el$ are bijective and inverse to one another,
it remains to show that $\el \colon K_1^w(\IZ G) \to K_1^{w, \ch} (\IZ G)$ is surjective.

We have to show for any finite based free $ \IZ G$-chain complex $C_*$ with the property
that $\Lambda(C_*)$ is $L^2$-acyclic that $[C_*]$ lies in the image of $\el$. By possibly
suspending $C_*$, we can assume without loss of generality that $C_n = 0$ for $n \le
-1$. Now we do induction over the dimension $d$ of $C_*$.  The induction beginning $d \le
1$ is obvious, the induction step from $d-1\ge 1 $ to $d$ is done as follows.

Choose a weak chain contraction  $\gamma_*$ for $C_*$. Define a $\IZ G$-chain map
\[
i_* \colon \el(c_1 \circ \gamma_0 \colon C_0 \to C_0) \to C_*
\]
by $i_1 = \gamma_0$, $i_0 = \id_{C_0}$ and $i_k = 0$ for $k \not=0,1$.  We conclude from
Lemma~\ref{lem:Additivity_of_universal_L2-torsion} applied to the short
based exact sequence 
$0 \to C_* \to \cone (i_*) \to \Sigma \el(c_1 \circ \gamma_0 \colon C_0 \to C_0 ) \to 0$ 
that $\cone (i_*)$ is a finite based free $\IZ G$-chain complex such that $\Lambda(\cone(f_*))$ is $L^2$-acyclic  
and we get in $\widetilde{K}_1^{w,\ch}(\IZ G)$
\[
[\cone(i_*)] = [(C_*)] -  \bigl[\el(c_1 \circ \gamma_0 \colon  C_0 \to C_0 )\bigr].
\]
Define a $\IZ G$-chain map 
\[
j_* \colon \el(\id_{C_0}) \to \cone(i_*)
\]
by $j_0 = \id_{C_0} \colon C_0 \to C_0$, $j_1 = \footnotesize{\begin{pmatrix}\id_{C_0} 
\\  0 \end{pmatrix}} \colon C_0 \to C_0 \oplus C_1$.  
Then we can equip $\coker(j_*)$ with
an obvious $\IZ G$-basis such that we obtain a  based exact short sequence of finite based free
$\IZ G$-chain complexes $0 \to \el(\id_{C_0}) \xrightarrow{j_*} \cone(i_*) \to \coker(j_*) \to 0$.  
We conclude from Lemma~\ref{lem:Additivity_of_universal_L2-torsion} 
that $\coker(j_*)$ is $L^2$-acyclic  and we get in $\widetilde{K}_1^{w,\ch}(\IZ G )$
\[
[\cone(i_*)] = [\el(\id_{C_0})] + [\coker(j_*)].
\]
Since $[\el(\id_{C_0})]$ and $\bigl[\el(c_1 \circ \gamma_0 \colon C_0 \to C_0 )\bigr]$ lie in
the image of $\el$, it suffices to show that $[\coker(j_*)]$ lies in the image of
$\el$. Since $\coker(j_*)$ has dimension $\le d$ and its zeroth-chain module is trivial,
this follows from the induction hypothesis. This finishes the proof that
the homomorphisms $\rho^{(2)}_u$ and $\el$ are bijective and inverse to one another.

The proofs for  $\rho \colon \widetilde{K}_1^{\ch}(\IZ G)  \to \widetilde{K}_1(\IZ G)$ are analogous.
\end{proof}

  \begin{remark}[Universal property of the universal $L^2$-torsion]
    \label{rem:universal_property_of_the_universal_L2-torsion}
    An \emph{additive $L^2$-torsion invariant} $(A,a)$ consists of an abelian group $A$
    and an assignment which associates to a finite based free $\IZ G$-chain complex $C_*$
    such that $\Lambda(C_*)$ is $L^2$-acyclic, an element $a(C_* )\in A$ such that for any
    based exact short sequence of such $\IZ G$-chain complexes $0 \to C_* \to D_* \to E_*
    \to 0$ we get
    \[
    a(D_*) = a(C_*) + a(E_*),
    \]
    and we have
    \[
    a(\el(\pm \id_{\IZ G})) = 0.
    \]
    We call an additive $L^2$-torsion invariant  $ (U,u)$ \emph{universal} if for every
    additive $L^2$-torsion invariant $(A,a)$ there is precisely one group homomorphism 
   $f  \colon U \to A$ such that for every finite based free $\IZ G $-chain
    complex $C_*$ for which $\Lambda(C_*)$ is $L^2$-acyclic, we have $f(u(C_*)) = a(C_*)$.
    
   Theorem~\ref{the:chain_complexes_versus_automorphisms}  implies 
   that $(\tilde{K}_1^w(\IZ G), \rho^{(2)}_u )$ is the universal additive $L^2$-torsion invariant.
  \end{remark}

  We obtain an involution $\ast \colon \widetilde{K}_1^w(\IZ G) \to \widetilde{K}_1^w(\IZ G)$ by sending
  $[r_A\colon \IZ G^n \to \IZ G^n]$ to $[r_{A^*} \colon \IZ G^n \to \IZ G^n]$.

  \begin{lemma} \label{lem:rho(2)_u(C_aat)_in_terms_of_the_combinatorial_Laplace_operator}
   
    Let $C_*$ be a finite based free $\IZ G$-chain complex such that $\Lambda(C_*)$ is
    $L^2$-acyclic.  Then for $n \in \IZ$ the combinatorial Laplace operator is a $\IZ G$-map
    $\Delta_n \colon C_n \to C_n$ such that $\Lambda(\Delta_n)$ is a weak isomorphism, 
    and we get in $\widetilde{K}_1^w(\IZ G)$
    \[
    \rho^{(2)}_u(C_*)  + \ast(\rho^{(2)}_u(C_*)) =   - \sum_{n \ge 0} (-1)^n \cdot n \cdot [\Delta_n].
    \]
  \end{lemma}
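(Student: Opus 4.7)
I would work with the specific weak chain contraction $(\gamma_*, u_*)$ given by $\gamma_n := c_{n+1}^*$ and $u_n := \Delta_n$, namely the one produced in the first implication of the proof of Lemma~\ref{lem:L2-acyclic_and_weak_chain_contractions}. That this is a weak chain contraction is established there, and one checks directly from $c_n c_{n+1} = 0$ the commutation $c_{n+1}^* \Delta_n = \Delta_{n+1} c_{n+1}^*$ (and the companion identity $c_n \Delta_n = \Delta_{n-1} c_n$). Setting $\widetilde M := \Delta c + c^*$ and $\widetilde M^* := c^* \Delta + c$, the $\IZ G$-adjoint of the map $\widetilde M|_{C_{\odd}} \colon C_{\odd} \to C_{\ev}$ equals $\widetilde M^*|_{C_{\ev}}$, and the self-adjointness of each $\Delta_n$ gives
\[
\rho^{(2)}_u(C_*) = [\widetilde M|_{C_{\odd}}] - [\Delta_{\odd}], \qquad \ast \rho^{(2)}_u(C_*) = [\widetilde M^*|_{C_{\ev}}] - [\Delta_{\odd}].
\]
Adding and using the composition rule $[f_2 \circ f_1] = [f_1] + [f_2]$ in $\widetilde K_1^w(\IZ G)$ (as in Remark~\ref{rem:Endomorphisms_of_finitely_generated_based_free_ZG-modules_widetildeK_1_upper_w(ZG)}),
\[
\rho^{(2)}_u(C_*) + \ast\rho^{(2)}_u(C_*) = [\widetilde M^* \widetilde M|_{C_{\odd}}] - 2[\Delta_{\odd}].
\]

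A direct expansion of $\widetilde M^* \widetilde M$ using $c c = 0$, $c^* c^* = 0$ and $c \Delta = \Delta c$ shows that it is block diagonal on $C_* = \bigoplus_n C_n$, with block at $C_n$ equal to $p_n \Delta_n^2 + q_n$, where $p_n := c_n^* c_n$ and $q_n := c_{n+1} c_{n+1}^*$. The orthogonality relations $p_n q_n = q_n p_n = 0$ (each factors through an adjacent boundary composite which vanishes) together with $\Delta_n = p_n + q_n$ give $p_n \Delta_n^2 = p_n(p_n^2 + q_n^2) = p_n^3$, hence
\[
\widetilde M^* \widetilde M|_{C_n} = p_n^3 + q_n.
\]
Thus the claim reduces to the identity $\sum_{n\text{ odd}} [p_n^3 + q_n] - 2[\Delta_{\odd}] = -\sum_{n \ge 0} (-1)^n n [\Delta_n]$ in $\widetilde K_1^w(\IZ G)$.

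The main obstacle is this remaining combinatorial identity. My plan is to invoke the universal property of Remark~\ref{rem:universal_property_of_the_universal_L2-torsion} together with Theorem~\ref{the:chain_complexes_versus_automorphisms}: both sides are $L^2$-torsion invariants on the class of $L^2$-acyclic finite based free $\IZ G$-chain complexes, so once additivity on short based exact sequences is established, they factor uniquely through $\rho^{(2)}_u$ and it suffices to verify agreement on the generators $\el(f) = (0 \to \IZ G^m \xrightarrow{f} \IZ G^m \to 0)$. For $\el(f)$ placed in degrees $1$ and $0$ one has $p_1 = f^* f$, $q_1 = 0$, $p_0 = 0$, $q_0 = f f^*$ and $\Delta_n = 0$ for $n \notin \{0,1\}$; the left-hand side then evaluates to $[(f^* f)^3] - 2[f^* f] = 3[f^* f] - 2[f^* f] = [f^* f]$ and the right-hand side to $[\Delta_1] = [f^* f]$, so they agree. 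Additivity of the left-hand side is immediate from Lemma~\ref{lem:Additivity_of_universal_L2-torsion} and the fact that $\ast$ is a group homomorphism; the technical heart of the argument is additivity of $-\sum (-1)^n n [\Delta_n]$ along a short based exact sequence $0 \to C_* \to D_* \to E_* \to 0$. Writing $D_n = C_n \oplus E_n$ with an upper-triangular differential $d_n = \begin{pmatrix} c_n & \alpha_n \\ 0 & e_n \end{pmatrix}$, the Laplacian $\Delta_n(D_*)$ is a $2 \times 2$ block matrix whose diagonal blocks differ from $\Delta_n(C_*)$, $\Delta_n(E_*)$ by correction terms arising from $\alpha_*$, and the cancellation of these corrections in the weighted alternating sum is the remaining work; it would be extracted by row and column manipulations in $\widetilde K_1^w(\IZ G)$ combined with an Abel (telescoping) rearrangement across consecutive values of $n$.
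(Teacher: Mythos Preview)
Your direct computation using the weak chain contraction $(c^*, \Delta)$ is correct and yields the neat formula $\rho^{(2)}_u(C_*) + \ast\rho^{(2)}_u(C_*) = \sum_{n \text{ odd}} [p_n^3 + q_n] - 2[\Delta_{\odd}]$. However, your plan to finish via the universal property has a genuine gap: it rests entirely on the additivity of $C_* \mapsto -\sum_n (-1)^n n [\Delta_n(C_*)]$ along short based exact sequences, and this is precisely what is not established. The sketch (``row and column manipulations \ldots\ telescoping'') does not overcome the central obstacle: with $d_n = \begin{pmatrix} c_n & \alpha_n \\ 0 & e_n \end{pmatrix}$, the Laplacian $\Delta_n(D_*)$ is neither block-diagonal nor block-triangular; its off-diagonal entry $\alpha_{n+1} e_{n+1}^* + c_n^* \alpha_n$ mixes data from adjacent degrees and from both $C_*$ and $E_*$, and there is no evident elementary manipulation in $\widetilde K_1^w(\IZ G)$ that isolates the correction as a telescoping difference. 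Note also that once additivity of the right-hand side is known, the lemma follows immediately from the universal property and the generator check alone --- your $[p_n^3 + q_n]$ calculation becomes unnecessary --- so the additivity claim carries the full weight of the lemma and cannot be left as ``remaining work''.

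The paper proceeds differently: it argues by induction on the length $l$ of $C_*$. The base case $l = 1$ is a direct computation using $[\Delta_{n_+}] = [\Delta_{n_+ - 1}]$. For the induction step one maps the two-term complex $\el_{n_-}(\Delta_{n_-})_*$ into $C_*$ via $(f_{n_-}, f_{n_- + 1}) = (\id, c_{n_- + 1}^*)$, passes to the mapping cone, and then quotients by $\el_{n_-}(\id_{C_{n_-}})_*$ to obtain a complex $D_*$ of length $\le l-1$ whose Laplacians are explicitly computable in terms of those of $C_*$; additivity of $\rho^{(2)}_u$ (Lemma~\ref{lem:Additivity_of_universal_L2-torsion}) and the induction hypothesis then give the formula. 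This approach never needs additivity of $\sum (-1)^n n [\Delta_n]$ as a standalone fact; it establishes the identity and (hence) that additivity simultaneously.
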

  \begin{proof} 
    Lemma~\ref{lem:L2-acyclic_and_weak_chain_contractions} shows that $\Lambda(\Delta_n)$
    is a weak isomorphism for all $n \in \IZ$.  A finite based free $\IZ G$-chain complex
    has length $\le l$ if there exist natural numbers $n_-$ and $n_+$ with $n_- \le n_+$
    such that $C_n \not= 0$ implies $n_- \le n \le n_+$ and $l = n_+ -n_-$. We prove
    Lemma~\ref{lem:rho(2)_u(C_aat)_in_terms_of_the_combinatorial_Laplace_operator} by
    induction over $l$. The induction beginning $l= 1$ is done as follows.  Choose a
    natural number $n_+$ such that $C_*$ is concentrated in dimensions $n_+$ and $n_+ -1$.
    Since $\Delta_{n_+-1} \circ c_{n_+} = c_{n_+} \circ \Delta_{n_+}$ holds, we get in
    $\widetilde{K}_1^w(\IZ G)$
\[
[\Delta_{n_+-1}] = [\Delta_{n_+-1}\circ c_{n_+}] -  [c_{n_+}] = [c_{n_+} \circ \Delta_{n_+}] -  [c_{n_+}] =  [\Delta_{n_+}].
\] 
Now we compute
\begin{eqnarray*}
\rho^{(2)}_u(C_*) + \ast(\rho^{(2)}_u(C_*))
& = & 
(-1)^{n_++1} \cdot {[c_{n_+}]} + \ast\bigl((-1)^{n_++1} \cdot {[c_{n_+}]}\bigr) 
\\
& = & 
(-1)^{n_++1} \cdot {[c_{n_+}]} + (-1)^{n_++1} \cdot {[c_{n_+}^*]}
\\
& = & 
- (-1)^{n_+} \cdot {[c_{n_+}^* \circ c_{n_+}]} 
\\
 & = & 
- (-1)^{n_+} \cdot {[\Delta_{n_+}]}
\\
& = & 
- \left( (-1)^{n_+} \cdot n_+ \cdot {[\Delta_{n_+}]} + (-1)^{n_+-1} \cdot (n_+-1) \cdot {[\Delta_{n_+}]}\right)
\\
& = & 
- \left((-1)^{n_+} \cdot n_+ \cdot {[\Delta_{n_+}]} + (-1)^{n_+-1} \cdot (n_+-1) \cdot {[\Delta_{n_+-1}]}\right)
\\
& = & 
- \sum_{n \in \IZ} (-1)^n \cdot n \cdot {[\Delta_n]}.
\end{eqnarray*}
The induction beginning from $l -1 \ge 1$ to $l$ is done as follows.  Choose integers
$n_-$ and $n_+$ with $n_- \le n_+$ such that $C_n \not= 0$ implies $n_- \le n \le n_+$ and
$n_+ -n_- \le l$. Let $\el_{n_-}(\Delta_{n_-})_*$ be the finite based free $\IZ G$-chain
complex concentrated in dimensions $n_-+1$ and $n_-$ whose $(n_-+1)$-st differential is
$\Delta_{n_-} \colon C_{n_-} \to C_{n_-}$.  Define a $\IZ G$-chain map 
$f_* \colon \el_{n_-}(\Delta_{n_-})_*\to C_*$ by putting $f_{n_-} = \id_{C_{n_-}}$ and $f_{n_-+1} =
c_{n_-+1}^*$. Let $\cone(f_*)$ be its mapping cone. Then we obtain a based exact short
sequence of finite based free $\IZ G$-chain complexes 
$0 \to C_*\to \cone(f_*) \to \Sigma \el_{n_-}(\Delta_{n_-})_*\to 0$. Since $\Lambda(\el_{n_-}(\Delta_{n_-})_*)$ 
and $\Lambda(C_*)$ are $L^2$-acyclic, Lemma~\ref{lem:Additivity_of_universal_L2-torsion}
implies that also $\cone(f_*)$ is $L^2$-acyclic and we get in $\widetilde{K}_1^w(\IZ G)$
\begin{eqnarray}
\rho^{(2)}_u(C_*)  
& = &
\rho^{(2)}_u(\cone(f_*) ) - \rho^{(2)}_u\bigl(\Sigma  \el_{n_-}(\Delta_{n_-})_*\bigr).
\label{lem:rho(2)_u(C_aat)_in_terms_of_the_combinatorial_Laplace_operator:(1)}
\end{eqnarray}
Let $\el_{n_-}(\id_{C_{n_-}})_*$ be the finite based free $\IZ G$-chain complex
concentrated in dimensions $n_-+1$ and $n_-$ whose $(n_-+1)$-st differential is $\id \colon
C_{n_-} \to C_{n_-}$. Let $i_* \colon \el_{n_-}(\id_{C_{n_-}})_*\to \cone(f_*)_* $ be
the $\IZ G$-chain map which is given by the identity $\id_{C_{n_-}}$ in degree $n_-$ and
by the obvious inclusion $C_{n_-} \to C_{n_-} \oplus C_{n_-+1}$ in degree $n_- +1$. The
cokernel of $i_*$  is the finite based free $\IZ G$-chain complex $D_*$ which is
concentrated in dimensions $n$ for $n_- + 1 \le n \le n_+$ and given by
\begin{multline*} \cdots \to 0 \to C_{n_+} \xrightarrow{c_{n_+}} C_{n_+-1} 
\xrightarrow{c_{n_+-1 }}  \cdots \xrightarrow{c_{n_-+4}} C_{n_- +3}  
\xrightarrow{\begin{pmatrix} 0 & c_{n_- +3}  \end{pmatrix}} C_{n_-}   \oplus C_{n_- +2}  
\\
\xrightarrow{\begin{pmatrix} - c_{n_-+1}^* & c_{n_-+2} \end{pmatrix}} C_{n_- + 1} \to 0 \to \cdots
\end{multline*}
We obtain a based exact short sequence of finite based free $\IZ G$-chain complexes 
$0 \to  \el_{n_-}(\id_{C_{n_-}})_*  \xrightarrow{i_*} \cone(f_*) \to D_* \to 0$. 
Lemma~\ref{lem:Additivity_of_universal_L2-torsion} implies that $\Lambda(D_*)$ is 
$L^2$-acyclic and we get in $\widetilde{K}_1^w(\IZ G)$
\begin{equation}
\rho^{(2)}_u(\cone(f_*))  =  \rho^{(2)}_u\bigl(\el_{n_-}(\id_{C_{n_-}})_*\bigr)  + \rho^{(2)}_u(D_*) =  \rho^{(2)}_u(D_*).
\label{lem:rho(2)_u(C_aat)_in_terms_of_the_combinatorial_Laplace_operator:(2)}
\end{equation}
Combining~\eqref{lem:rho(2)_u(C_aat)_in_terms_of_the_combinatorial_Laplace_operator:(1)}
and~\eqref{lem:rho(2)_u(C_aat)_in_terms_of_the_combinatorial_Laplace_operator:(2)} yields
\begin{eqnarray}
\rho^{(2)}_u(C_*) 
& = & 
\rho^{(2)}_u(D_*) - \rho^{(2)}_u\bigl(\Sigma  \el_{n_-}(\Delta_{n_-})_*\bigr).
\label{lem:rho(2)_u(C_aat)_in_terms_of_the_combinatorial_Laplace_operator:(3)}
\end{eqnarray}
The induction hypothesis applies to $D_*$ since its length is $\le l-1$. Hence we get, 
if $\Delta_n'$ is the combinatorial Laplace operator of $D_*$, that
\[ \begin{array}{rcl}
&&\rho^{(2)}_u(D_*) + \ast\bigl(\rho^{(2)}_u(D_*) \bigr)
\\
& = &
- \sum_{n \in \IZ} (-1)^n \cdot n \cdot  {[\Delta'_n]}
\\
& = & 
- (-1)^{n_-+1} \cdot (n _- +1) \cdot {[\Delta_{n_-+1}]} 
- (-1)^{n_-+2} \cdot (n _- +2) \cdot {[\Delta_{n_-} \oplus \Delta_{n_-+2}]}
\\
& & \hspace{80mm} - \smsum{n = n_-+3}{n_+}  (-1)^n \cdot n \cdot  {[\Delta_n]}
\\
& = & 
- \smsum{n = n_-+1}{n_+} (-1)^n \cdot n \cdot  {[\Delta_n]} -  (-1)^{n_-+2} \cdot (n _- +2) \cdot {[\Delta_{n_-}]}
\\
& = & 
- \smsum{n = n_-}{n_+}  (-1)^n \cdot n \cdot  {[\Delta_n]} -  (1)^{n_-+2} \cdot 2 \cdot {[\Delta_{n_-}]}\\
&=&
- \smsum{n \in \IZ}{}  (-1)^n \cdot n \cdot  {[\Delta_n]} +  (1)^{n_-+1} \cdot 2 \cdot {[\Delta_{n_-}]}.
\end{array}\]
We compute
\[ 
\begin{array}{rlc}
& \,\, 
\rho^{(2)}_u\bigl(\Sigma  \el_{n_-}(\Delta_{n_-})_*\bigr) 
+ \ast\bigl(\rho^{(2)}_u\bigl(\Sigma  \el_{n_-}(\Delta_{n_-})_*\bigr)\bigr)
\\
& = \,\,
(-1)^{n_- + 1} \cdot {[-\Delta_{n_-}]} + \ast\bigl((-1)^{n_- + 1}  {[-\Delta_{n_-}]}\bigr)
\,\,= \,\,
(-1)^{n_- + 1} \cdot 2 \cdot  {[\Delta_{n_-}]}.
\end{array}\]
Now Lemma~\ref{lem:rho(2)_u(C_aat)_in_terms_of_the_combinatorial_Laplace_operator}
 follows from~\eqref{lem:rho(2)_u(C_aat)_in_terms_of_the_combinatorial_Laplace_operator:(3)} 
together with the last two equalities.
\end{proof}


\subsection{Review of division and rational closure}
\label{subsec:Review_of_division_closure}

Let $R$ be a subring of the ring $S$. (Here and throughout the paper a ring is understood to be an associative ring with 1, which is not necessarily commutative.) The \emph{division closure} $\cald(R \subseteq S) \subseteq S$ 
is the smallest subring of $S$ which contains $R$ and is division closed,
i.e., any element $x \in \cald(R \subseteq S)$ which is invertible in $S$ is already invertible in
$\cald(R \subseteq S)$. The \emph{rational closure} $\calr(R \subseteq S) \subseteq S$ is
the smallest subring of $S$ which contains $R$ and is rationally closed, i.e., for any
natural number $n$ and matrix $A \in M_{n,n}(\calr(R \subseteq S))$ which is invertible in
$S$ is already invertible over $\calr(R \subseteq S)$. The division closure and the
rational closure always exist. Obviously 
$R \subseteq \cald(R \subseteq S) \subseteq \calr(R \subseteq S) \subseteq S$.

Consider a group $G$. Let $\caln(G)$ be the group von Neumann algebra which can be
identified with the algebra $\calb(L^2(G),L^2(G))^G$ of bounded $G$-equivariant operators
$L^2(G) \to L^2(G)$.  Denote by $\calu(G)$ the algebra of operators which are affiliated
to the group von Neumann algebra, see \cite[Section~8]{Lueck(2002)} for details.   
This is the same as the Ore localization of $\caln(G)$
with respect to the multiplicatively closed subset of non-zero divisors in $\caln(G)$,
see~\cite[Theorem~8.22~(1)]{Lueck(2002)}. By the right regular representation we can embed 
$\IC G$ and hence also $\IZ G$ as subring in $\caln(G)$.  We will denote by $\calr(G)$ and
$\cald(G)$ the division and the rational closure of $\IZ G$ in $\calu(G)$.  So we get a
commutative diagram of inclusions of rings
\[
\xymatrix@!C= 8em@R0.5cm{
\IZ G \ar[r] \ar[d]
&
\caln(G) \ar[dd]
\\
\cald(G) \ar[d]
&
\\
\calr(G) \ar[r]
& \calu(G)}
\]

\begin{lemma} \label{lem:L2-acyclic_and_ZG_subseteq_calr(G)-contractible}
Let $C_*$ be a finite based free $\IZ G$-chain complex.  Then the following 
assertions are equivalent:

\begin{enumerate}[font=\normalfont]

\item \label{lem:L2-acyclic_and_ZG_subseteq_calr(G)-contractible:L2-acyclic}
$\Lambda(C_*)$  is $L^2$-acyclic;

\item \label{lem:L2-acyclic_and_ZG_subseteq_calr(G)-contractible:Laplace_weak}
The operator $\Lambda(\Delta_n) \colon \Lambda(C_n) \to \Lambda(C_n)$ 
is a weak isomorphism for all $n \in \IZ$;

\item \label{lem:L2-acyclic_and_ZG_subseteq_calr(G)-contractible:Laplace_U(G)}
The $\calu(G)$-homomorphism $\id_{\calu(G)} \otimes_{\IZ G} \Delta_n \colon \calu(G) \otimes_{\IZ G} C_n 
\to \calu(G) \otimes_{\IZ G} C_n$
is an isomorphism for all $n \in \IZ;$

\item \label{lem:L2-acyclic_and_ZG_subseteq_calr(G)-contractible:Laplace_R(G)}

The $\calr(G)$-homomorphism $\id_{\calr(G)} \otimes_{\IZ G} \Delta_n \colon \calr(G) \otimes_{\IZ G} C_n 
\to \calr(G) \otimes_{\IZ G} C_n$
is an isomorphism for all $n \in \IZ$;

\item \label{lem:L2-acyclic_and_ZG_subseteq_calr(G)-contractible:contractible_over_calu}
The $\calu(G)$-chain complex $\calu(G) \otimes_{\IZ G} C_*$ is contractible; 

\item \label{lem:L2-acyclic_and_ZG_subseteq_calr(G)-contractible:contractible_over_calr}
The $\calr(G)$-chain complex $\calr(G) \otimes_{\IZ G} C_*$ is contractible.

\end{enumerate}
\end{lemma}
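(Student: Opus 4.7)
The plan is to use Lemma~\ref{lem:L2-acyclic_and_weak_chain_contractions} for the equivalence \eqref{lem:L2-acyclic_and_ZG_subseteq_calr(G)-contractible:L2-acyclic} $\iff$ \eqref{lem:L2-acyclic_and_ZG_subseteq_calr(G)-contractible:Laplace_weak}, then establish \eqref{lem:L2-acyclic_and_ZG_subseteq_calr(G)-contractible:Laplace_weak} $\iff$ \eqref{lem:L2-acyclic_and_ZG_subseteq_calr(G)-contractible:Laplace_U(G)} $\iff$ \eqref{lem:L2-acyclic_and_ZG_subseteq_calr(G)-contractible:Laplace_R(G)} and close the cycle via \eqref{lem:L2-acyclic_and_ZG_subseteq_calr(G)-contractible:Laplace_U(G)} $\Rightarrow$ \eqref{lem:L2-acyclic_and_ZG_subseteq_calr(G)-contractible:contractible_over_calu}, \eqref{lem:L2-acyclic_and_ZG_subseteq_calr(G)-contractible:Laplace_R(G)} $\Rightarrow$ \eqref{lem:L2-acyclic_and_ZG_subseteq_calr(G)-contractible:contractible_over_calr} $\Rightarrow$ \eqref{lem:L2-acyclic_and_ZG_subseteq_calr(G)-contractible:contractible_over_calu} $\Rightarrow$ \eqref{lem:L2-acyclic_and_ZG_subseteq_calr(G)-contractible:L2-acyclic}.

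For \eqref{lem:L2-acyclic_and_ZG_subseteq_calr(G)-contractible:Laplace_weak} $\iff$ \eqref{lem:L2-acyclic_and_ZG_subseteq_calr(G)-contractible:Laplace_U(G)} I would invoke the fact that $\calu(G)$ is the Ore localization of $\caln(G)$ at its non-zero-divisors, so a morphism $\Lambda(f) \colon \Lambda(M) \to \Lambda(N)$ of finite Hilbert $\caln(G)$-modules is a weak isomorphism precisely when $\id_{\calu(G)} \otimes_{\IZ G} f$ is a $\calu(G)$-isomorphism; compare~\cite[Theorem~8.22 and Lemma~8.29]{Lueck(2002)}. Applied to $f = \Delta_n$ this yields the equivalence. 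For \eqref{lem:L2-acyclic_and_ZG_subseteq_calr(G)-contractible:Laplace_U(G)} $\iff$ \eqref{lem:L2-acyclic_and_ZG_subseteq_calr(G)-contractible:Laplace_R(G)} the Laplace operator is represented by a square matrix over $\IZ G \subseteq \calr(G)$, and by the very definition of rational closure such a matrix is invertible over $\calu(G)$ if and only if it is invertible over $\calr(G)$.

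For \eqref{lem:L2-acyclic_and_ZG_subseteq_calr(G)-contractible:Laplace_U(G)} $\Rightarrow$ \eqref{lem:L2-acyclic_and_ZG_subseteq_calr(G)-contractible:contractible_over_calu} (and \eqref{lem:L2-acyclic_and_ZG_subseteq_calr(G)-contractible:Laplace_R(G)} $\Rightarrow$ \eqref{lem:L2-acyclic_and_ZG_subseteq_calr(G)-contractible:contractible_over_calr} by the identical argument) the identity $c_n \circ c_{n+1} = 0$ together with self-adjointness of $\Delta_n$ gives
\[
\Delta_n \circ c_{n+1} \,=\, c_{n+1} \circ c_{n+1}^* \circ c_{n+1} \,=\, c_{n+1} \circ \Delta_{n+1},
\]
so after inversion $c_{n+1} \circ \Delta_{n+1}^{-1} = \Delta_n^{-1} \circ c_{n+1}$, and, taking adjoints, $\Delta_{n+1}^{-1} \circ c_{n+1}^* = c_{n+1}^* \circ \Delta_n^{-1}$. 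The maps $\gamma_n := \Delta_{n+1}^{-1} \circ c_{n+1}^* = c_{n+1}^* \circ \Delta_n^{-1}$ then satisfy
\[
c_{n+1} \circ \gamma_n + \gamma_{n-1} \circ c_n \,=\, \Delta_n^{-1} \circ (c_{n+1} c_{n+1}^* + c_n^* c_n) \,=\, \id_{C_n}
\]
and define a chain contraction over $\calu(G)$, respectively $\calr(G)$. The implication \eqref{lem:L2-acyclic_and_ZG_subseteq_calr(G)-contractible:contractible_over_calr} $\Rightarrow$ \eqref{lem:L2-acyclic_and_ZG_subseteq_calr(G)-contractible:contractible_over_calu} is immediate from extension of scalars along $\calr(G) \hookrightarrow \calu(G)$, which preserves contractibility.

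Finally, for \eqref{lem:L2-acyclic_and_ZG_subseteq_calr(G)-contractible:contractible_over_calu} $\Rightarrow$ \eqref{lem:L2-acyclic_and_ZG_subseteq_calr(G)-contractible:L2-acyclic}, contractibility of $\calu(G) \otimes_{\IZ G} C_*$ forces $H_n(\calu(G) \otimes_{\IZ G} C_*) = 0$ for all $n$, and the identification
\[
b_n^{(2)}(\Lambda(C_*)) \,=\, \dim_{\caln(G)} H_n(\caln(G) \otimes_{\IZ G} C_*) \,=\, \dim_{\calu(G)} H_n(\calu(G) \otimes_{\IZ G} C_*)
\]
(combining the algebraic description of $L^2$-Betti numbers with flatness of $\calu(G)$ over $\caln(G)$, see~\cite[Chapters~6 and 8]{Lueck(2002)}) then forces $\Lambda(C_*)$ to be $L^2$-acyclic. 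The one step I expect to require actual care rather than bookkeeping is the correct translation between weak isomorphisms of finite Hilbert $\caln(G)$-modules and $\calu(G)$-isomorphisms underlying \eqref{lem:L2-acyclic_and_ZG_subseteq_calr(G)-contractible:Laplace_weak} $\iff$ \eqref{lem:L2-acyclic_and_ZG_subseteq_calr(G)-contractible:Laplace_U(G)}; everything else reduces to the standard Laplace identities above and ring-theoretic properties of rational closures.
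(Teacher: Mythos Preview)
Your proposal is correct and follows essentially the same route as the paper: the equivalence \eqref{lem:L2-acyclic_and_ZG_subseteq_calr(G)-contractible:L2-acyclic} $\Leftrightarrow$ \eqref{lem:L2-acyclic_and_ZG_subseteq_calr(G)-contractible:Laplace_weak} by the earlier lemma, \eqref{lem:L2-acyclic_and_ZG_subseteq_calr(G)-contractible:Laplace_weak} $\Leftrightarrow$ \eqref{lem:L2-acyclic_and_ZG_subseteq_calr(G)-contractible:Laplace_U(G)} by the cited results from~\cite{Lueck(2002)}, \eqref{lem:L2-acyclic_and_ZG_subseteq_calr(G)-contractible:Laplace_U(G)} $\Leftrightarrow$ \eqref{lem:L2-acyclic_and_ZG_subseteq_calr(G)-contractible:Laplace_R(G)} by the definition of rational closure, then contractibility via the Laplacian, and back to \eqref{lem:L2-acyclic_and_ZG_subseteq_calr(G)-contractible:L2-acyclic} via the dimension identities. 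The only cosmetic difference is that the paper phrases the step \eqref{lem:L2-acyclic_and_ZG_subseteq_calr(G)-contractible:Laplace_R(G)} $\Rightarrow$ \eqref{lem:L2-acyclic_and_ZG_subseteq_calr(G)-contractible:contractible_over_calr} as ``$c^*$ is a nullhomotopy of the chain map $\Delta_*$, which becomes an isomorphism over $\calr(G)$'', whereas you write out the resulting contraction $\gamma_n = c_{n+1}^* \circ \Delta_n^{-1}$ explicitly; these are the same argument.
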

\begin{proof}~\eqref{lem:L2-acyclic_and_ZG_subseteq_calr(G)-contractible:L2-acyclic}
$\Longleftrightarrow$~\eqref{lem:L2-acyclic_and_ZG_subseteq_calr(G)-contractible:Laplace_weak}
This has already been proved in Lemma~\ref{lem:L2-acyclic_and_weak_chain_contractions}.
\\[1mm]~\eqref{lem:L2-acyclic_and_ZG_subseteq_calr(G)-contractible:Laplace_weak}
$\Longleftrightarrow$~\eqref{lem:L2-acyclic_and_ZG_subseteq_calr(G)-contractible:Laplace_U(G)}
This follows from~\cite[Theorem~6.24 on page~249 and Theorem~8.22~(5) on page~327]{Lueck(2002)}.
\\[1mm]~\eqref{lem:L2-acyclic_and_ZG_subseteq_calr(G)-contractible:Laplace_U(G)}
$\Longleftrightarrow$~\eqref{lem:L2-acyclic_and_ZG_subseteq_calr(G)-contractible:Laplace_R(G)}
This follows from the definition of the rational closure.
\\[1mm]~\eqref{lem:L2-acyclic_and_ZG_subseteq_calr(G)-contractible:Laplace_R(G)} 
$\implies$~\eqref{lem:L2-acyclic_and_ZG_subseteq_calr(G)-contractible:contractible_over_calr}
The collection of the $\IZ G$-maps $c_{n+1}^* \colon C_n \to C_{n+1}$ 
defines $\IZ G$-chain homotopy $\Delta_* \simeq 0_*$,
where $\Delta_* \colon C_* \to C_*$ is the $\IZ G$-chain map given by $\Delta_n$ in degree $n$. 
Therefore we get a $\calr(G)$-chain isomorphism 
$\id_{\calr(G)} \otimes_{\IZ G} \Delta_*\colon \calr(G) \otimes_{\IZ G} C_* \to \calr(G) \otimes_{\IZ G} C_*$
which is $\calr(G)$-nullhomotopic. Hence $\calr(G) \otimes_{\IZ G} C_*$ is contractible.
\\[1mm]~\eqref{lem:L2-acyclic_and_ZG_subseteq_calr(G)-contractible:contractible_over_calr}
$\implies$~\eqref{lem:L2-acyclic_and_ZG_subseteq_calr(G)-contractible:contractible_over_calu}
This is obvious.
\\[1mm]~\eqref{lem:L2-acyclic_and_ZG_subseteq_calr(G)-contractible:contractible_over_calu}
$\implies$~\eqref{lem:L2-acyclic_and_ZG_subseteq_calr(G)-contractible:L2-acyclic}
We conclude from~\cite[Theorem~6.24 on page~249 and Theorem~8.29~(5) on page~330]{Lueck(2002)}
that $b_n^{(2)}(\Lambda(C_*)) = 0$ for all $n \in \IZ$.
This finishes the proof of Lemma~\ref{lem:L2-acyclic_and_ZG_subseteq_calr(G)-contractible}.
\end{proof}

With the notation we just introduced we can now formulate the following proposition of Linnell--L\"uck, see \cite[Theorem~0.1]{Linnell-Lueck(2016)}.
For many torsionfree groups it gives an alternative description of $K_1^w(\IZ [G])$.

\begin{proposition}
 Let $\calc$ be the
  smallest class of groups which contains all free groups and is closed under directed
  unions and extensions with elementary amenable quotients.  Let $G$ be a torsionfree
  group which belongs to $\calc$. Then the following hold:
 \begin{enumerate}[font=\normalfont]
 \item  $K_1^w(\IZ [G])$ is isomorphic to
  $K_1(\cald(G))$. 
  \item The ring  $\cald(G)$ is a skew field.
  \item The group 
  $K_1(\cald(G))$ is the abelianization of the multiplicative group of units in
  $\cald(G)$.
  \end{enumerate}
  \end{proposition}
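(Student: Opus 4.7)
The plan is to prove the three assertions in the order (2), (3), (1), since the latter two both exploit the skew-field structure established in (2). Assertion (2) is the substantive one; given (2), assertion (3) is a classical fact and assertion (1) reduces to a comparison with a universal localization.

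For assertion (2), I would argue by transfinite induction along the construction of $\calc$. In the base case $G$ is free, and $\cald(G)$ is known to be a skew field: $\IC G$ embeds into a skew field (via the Cohn universal field of fractions, or via the Lewin--Lewin construction), and the division closure of $\IZ G$ inside $\calu(G)$ may be identified with this skew field. For a directed union $G = \bigcup_i G_i$, one notes that $\cald(G)$ is the directed union of the subfields $\cald(G_i) \subseteq \calu(G)$, hence is itself a skew field. The serious step is an extension $1 \to N \to G \to Q \to 1$ with $N \in \calc$ and $Q$ elementary amenable. Here one analyzes $\cald(G)$ as a crossed-product-like object over $\cald(N)$ and invokes Linnell's theorem on the Atiyah conjecture to conclude that $\cald(G)$ is semisimple Artinian, with Wedderburn factors controlled by the finite subgroups of $Q$. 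Torsionfreeness of $G$ then forces all these finite subgroups to be trivial, leaving a single simple factor and hence a skew field. This extension step is the technical core of the argument and the main obstacle.

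Once (2) is established, assertion (3) is standard: for any skew field $D$, Dieudonné's determinant (equivalently, the presentation of $GL_n(D)$ by elementary matrices together with diagonal matrices of the form $\operatorname{diag}(u,1,\dots,1)$) yields $K_1(D) \cong D^\times/[D^\times,D^\times]$.

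For assertion (1), I would define a homomorphism $\Phi \colon K_1^w(\IZ G) \to K_1(\cald(G))$ on generators by $[f \colon \IZ G^n \to \IZ G^n] \mapsto [\id_{\cald(G)} \otimes_{\IZ G} f]$. Well-definedness uses Lemma~\ref{lem:L2-acyclic_and_ZG_subseteq_calr(G)-contractible} applied to the chain complex $\el(f)$: since $\cald(G)$ is a skew field one has $\cald(G) = \calr(G)$, and so $\Lambda(f)$ is a weak isomorphism if and only if $\id_{\cald(G)} \otimes f$ is an isomorphism. The additivity, composition and block-triangular relations defining $K_1^w(\IZ G)$ map to the corresponding relations in $K_1(\cald(G))$ by direct inspection. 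To construct the inverse, I would use that $\cald(G)$ is the division closure of $\IZ G$ inside $\calu(G)$; combined with the fact that $\cald(G)$ is a skew field (so division and rational closure coincide), this identifies $\cald(G)$ with the Cohn universal localization of $\IZ G$ at the set $\Sigma$ of square matrices over $\IZ G$ whose images in $M_n(\cald(G))$ are invertible. Schofield's theorem on $K_1$ of universal localizations then gives a surjection from the group generated by $\Sigma$-matrices modulo the standard relations onto $K_1(\cald(G))$, and the kernel is precisely matched by the relations defining $K_1^w(\IZ G)$; this yields an inverse $\Psi$ to $\Phi$. The subtle point here is the identification of $\cald(G)$ with the universal localization at $\Sigma$, which again rests on the skew-field conclusion of (2).
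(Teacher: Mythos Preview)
The paper does not give its own proof of this proposition; it is quoted verbatim from \cite{Linnell-Lueck(2016)}, Theorem~0.1, introduced only by the sentence ``With the notation we just introduced we can now formulate the following proposition of Linnell--L\"uck.'' So there is no proof in this paper to compare your argument against.

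That said, your outline is broadly the architecture of the cited reference. Part~(2) is Linnell's theorem that torsionfree groups in $\calc$ satisfy the Atiyah Conjecture with $\cald(G)$ a skew field, and the transfinite induction you sketch (free base case, directed unions, extensions with elementary amenable quotient) is the shape of Linnell's original argument. Part~(3) is, as you say, classical Dieudonn\'e. For part~(1), Linnell--L\"uck indeed identify $\cald(G)$ with a universal localization of $\IZ G$ and then compare $K_1$-groups along the lines you indicate; your appeal to the $K$-theory of universal localizations is the right ingredient, though the sentence ``the kernel is precisely matched by the relations defining $K_1^w(\IZ G)$'' conceals essentially all of the work carried out in \cite{Linnell-Lueck(2016)} and would have to be substantially expanded to stand as a proof.
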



 \typeout{------------------   Section 1: Universal $L^2$-torsion for $CW$-complexes-------------------}

\section{Universal $L^2$-torsion for $CW$-complexes and manifolds}
\label{sec:Universal_torsion_for_CW-complexes}

We will define the universal $L^2$-torsion $\rho^{(2)}_u(X;\caln(G)) \in \Wh^w(G)$ for an
$L^2$-acyclic finite free $G$-$CW$-complex $X$ by applying the notion of the universal $L^2$-torsion
of Section~\ref{sec:Universal_torsion_for_chain_complexes}  to the cellular chain
complex.  We will present the basic properties of this invariant in
Theorem~\ref{the:Main_properties_of_the_universal_L2-torsion}.

If $G$ is a group such that there exists a connected $L^2$-acyclic finite free
$G$-$CW$-complex, then we will see that every element in $\Wh^w(G)$ occurs as
$\rho^{(2)}_u(X;\caln(G))$, see Lemma~\ref{lem:realizability_G-CW}.


\subsection{The universal $L^2$-torsion for $G$-$CW$-chain complexes}
\label{subsec:The_universal_torsion_for_G-CW-complexes}

Notice that the cellular $G$-$CW$-structure on a finite free $G$-$CW$-complex defines only
an equivalence class of $\IZ G$-bases on $C_*(X)$, where we call two $\IZ G$-basis $B$ and
$B'$ equivalent if there exists a bijection $\sigma \colon B \to B'$ such that for every
$b \in B$ there exists $g\in G$ and $\epsilon\in \{\pm 1\}$ with $\epsilon \cdot g \cdot \sigma(b) = b$.  
The Hilbert $\caln(G)$-chain complex $C_*^{(2)}(X)$ is independent of the
choice of a $\IZ G$-basis within the equivalence class of cellular $\IZ G$-bases. So we can
define the $n$-th $L^2$-Betti number $b_n^{(2)}(X;\caln(G))$ to be
$b_n^{(2)}(C_*^{(2)}(X); \caln(G))$.

\begin{definition}[Universal $L^2$-torsion for $G$-$CW$-complexes] 
\label{def:universal_L2-torsion_for_G-CW-complexes}
Let $X$ be a finite free $G$-$CW$-complex which is $L^2$-acyclic, i.e., its $n$-th $L^2$-Betti number
$b_n^{(2)}(X;\caln(G))$ vanishes for all $n \ge 0$.
Then we define its \emph{universal $L^2$-torsion}
\[
\rho^{(2)}_u(X;\caln(G)) \in \Wh^w(G)
\]
to be the image of $\rho^{(2)}_u(C_*(X);\caln(G))$ under the projection $\widetilde{K}_1^w(\IZ G) \to \Wh^w(G)$
after any choice of $\IZ G$-basis for $C_*(X)$ which represents the equivalence class of cellular $\IZ G$-basis.

This definition extends to pairs $(X,Y)$ of finite free $G$-$CW$-complexes in the obvious way,
consider the cellular $\IZ G$-$CW$-complex $C_*(X,Y)$ and require that
$b_n^{(2)}(X,Y;\caln(G))$ vanishes for all $n \ge 0$.
\end{definition}

\begin{remark}[Universal $L^2$-torsion for manifolds]
  Every compact topological manifold has a preferred simple homotopy type,
  see~\cite[IV]{Kirby-Siebenmann(1969)}. We
  can therefore extend the definition of the universal $L^2$-torsion from CW-complexes to
  manifolds in the usual way.
\end{remark}

Since we consider the universal $L^2$-torsion as an element in $\Wh^w(G)$,
the choice of the $\IZ G$-basis representing the equivalence class of cellular $\IZ G$-basis
does not matter and $\rho^{(2)}_u(X;\caln(G))$ depends only on the finite free $G$-$CW$-structure on $X$.

\begin{example}[$G = \IZ$] \label{exa:G_is_Z} The quotient field $\IQ(z,z^{-1})$ of the
  integral domain $\IZ[z,z^{-1}] = \IZ[\IZ]$ consists of rational functions with
  coefficients in $\IQ$ in one variable $z$.  Given a matrix $A \in M_{n,n}(\IZ[\IZ])$,
  the operator $\Lambda\bigl(r_A \colon \IZ]\IZ]^n \to \IZ[\IZ]^n\bigr)$ is a weak
  isomorphism if and only if $\det_{\IZ[\IZ]}(A) \in \IZ[\IZ]$ is non-zero. This follows
  from~\cite[Lemma~1.34 on page~35]{Lueck(2002)}.  Hence we obtain a well-defined
  homomorphism
\[
{\det}_{\IZ[\IZ]} \colon K_1^w(\IZ[\IZ]) \to \IQ(z,z^{-1})^{\times},
\quad \bigl[r_A \colon \IZ[\IZ]^n \to \IZ[\IZ]^n\bigr] \mapsto {\det}_{\IZ[\IZ]}(A).
\]
Define 
\[
i \colon \IQ(z,z^{-1})^{\times} \to K_1^w(\IZ[\IZ])
\]
by sending an element $x \in \IQ(z,z^{-1})^{\times}$ to $[r_p \colon \IZ[\IZ] \to
\IZ[\IZ]] - [r_q \colon \IZ[\IZ] \to \IZ[\IZ]]$ for any two elements $p, q \in \IZ[\IZ]$
with $p \not= 0, q \not= 0$ satisfying $x = p \cdot q^{-1}$ in $\IQ(z,z^{-1})^{\times}$. One easily 
checks that $i$ is well-defined and ${\det}_{\IZ[\IZ]} \circ i = \id_{\IQ(z,z^{-1})^{\times}}$. It is not
hard to prove using the standard Euclidean algorithm on the polynomial ring $\IQ[z]$ 
that $i$ is surjective.  Hence ${\det}_{\IZ[\IZ]}$ and $i$ are inverses of each other.  We obtain an induced isomorphism
\begin{eqnarray}
\label{equ:whw}
\Wh^w(\IZ) 
& \xrightarrow{\cong} &
\IQ(z,z^{-1})^{\times} /\{\sigma \cdot  z^n \mid \sigma \in \{\pm 1\}, n \in \IZ\}.
\label{mir_faellt_kein_label_ein}
\end{eqnarray}
Consider $\IR$ with the standard $\IZ$-action given by translation. This is the universal
covering of $S^1$ with the standard action of $\pi_1(S^1) = \IZ$. The cellular
$\IZ[\IZ]$-chain complex of $\IR$ is $1$-dimensional and has as first differential
$r_{z-1} \colon \IZ[\IZ] \to \IZ[\IZ]$. Hence  the isomorphism~\eqref{mir_faellt_kein_label_ein} sends
$\rho^{(2)}_u(\IR;\caln(\IZ)) \in \Wh^w(\IZ)$ to the class of the element $(z-1)$.

More generally, let $X$ be any finite free $\IZ$-$CW$-complex. Then $X$ is $L^2$-acyclic
if and only if all of the $\IQ[\IZ]$-modules $H_n(X;\IQ)$ are torsion $\IQ[\IZ]$-modules,
see~\cite[Lemma~1.34 on page~35]{Lueck(2002)}.  It is now straightforward to see that
under the isomorphism~\ref{equ:whw} the universal $L^2$-torsion of $X$ corresponds to the
Milnor-Turaev torsion~\cite{Milnor(1966),Turaev(2001)} of $X$.
\end{example}

\begin{theorem}[Main properties of the universal $L^2$-torsion]\ 
\label{the:Main_properties_of_the_universal_L2-torsion}

\begin{enumerate}[font=\normalfont,leftmargin=0.9cm]
\item \emph{($G$-homotopy invariance)}
 \label{the:Main_properties_of_the_universal_L2-torsion:G-homotopy_invariance}
Let $f \colon X \to Y$ be a $G$-homotopy equivalence of finite free $G$-$CW$-complexes.
Suppose that $X$ or $Y$ is $L^2$-acyclic. Then both $X$ and $Y$ are $L^2$-acyclic and we get
\[
\rho^{(2)}_u(Y) - \rho^{(2)}_u(X)
 = \zeta(\tau(f)),
\]
where $\tau(f) \in \Wh(G)$ is the Whitehead torsion of $f$ and $\zeta \colon \Wh(G) \to \Wh^w(G)$
is the obvious homomorphism;

\item \emph{(Sum formula)}
\label{the:Main_properties_of_the_universal_L2-torsion:sum_formula}
Consider a $G$-pushout of  finite free $G$-$CW$-complexes
    \[
    \xymatrix@R0.5cm{ X_0 \ar[r] \ar[d] & X_1 \ar[d]
      \\
      X_2 \ar[r]& X }
    \]
    where the upper horizontal arrow is cellular, the left vertical arrow is an inclusion
    of $G$-$CW$-complexes and $X$ has the obvious $G$-$CW$-structure coming from the ones
    on $X_0$, $X_1$ and $X_2$.  Suppose that $X_0$, $X_1$ and $X_2$ are $L^2$-acyclic.
Then $X$ is $L^2$-acyclic and we get
    \[
   \hspace{1cm} \rho^{(2)}_u(X;\caln(G)) = \rho^{(2)}_u (X_1;\caln(G)) + \rho^{(2)}_u(X_2;\caln(G)) -
    \rho^{(2)}_u(X_0;\caln(G));
    \]

\item \emph{(Induction)}
   \label{the:Main_properties_of_the_universal_L2-torsion:induction}
   Let $j \colon H \to G$ be an inclusion of groups. 
   Denote by $j_* \colon \Wh^w(H) \to \Wh^w(G)$ the induced homomorphism.
    Let $X$ be an $L^2$-acyclic finite free $H$-$CW$-complex.
   Then $j_* X = G \times_H X$ is an $L^2$-acyclic finite free $G$-$CW$-complex and we get
    \[
   \rho^{(2)}_u(j_* X;\caln(G)) = j_* \bigl(\rho^{(2)}_u(X;\caln(H))\bigr);
    \]
   
\item \emph{(Restriction)}  
  \label{the:Main_properties_of_the_universal_L2-torsion:restriction}
   Let $j \colon H \to G$ be an inclusion of groups such that the index $[G:H]$ is finite. 
  Denote by $j^* \colon \Wh^w(G) \to \Wh^w(H)$ the homomorphism given by restriction.
    Let $X$ be a finite free $G$-$CW$-complex. Let $j^* X$ be the restriction
   to $H$ by $j$.
   Then $j^*X$ is a finite free $H$-$CW$-complex, $X$ is $L^2$-acyclic if and only if $j^*
   X$ is $L^2$-acyclic,  and, if this is the case,
   we get
    \[
   \rho^{(2)}_u(j^* X;\caln(H)) = j^* \bigl(\rho_u^{(2)}(X;\caln(G))\bigr);
    \]

\item \emph{(Product formula)}
    \label{the:Main_properties_of_the_universal_L2-torsion:product_formula}
    Let $G_0$ and $G_1$ be groups and denote by $j \colon G_0 \to G_0 \times G_1$ the obvious inclusion.
   Denote by $j_* \colon \Wh^w(G_0) \to \Wh^w(G_0 \times G_1)$ the induced homomorphism.
    Let $X_i$ be a finite free $G_i$-$CW$-complex for $i = 0,1$. Suppose that
    $X_0$ is $L^2$-acyclic. 
Then $X_0 \times X_1$ is $L^2$-acyclic and we get
    \[
    \rho^{(2)}_u(X_0 \times X_1; \caln(G_0 \times G_1)) 
   = \chi(X_1/G_1)  \cdot  j_* \bigl(\rho^{(2)}_u(X_0; \caln(G_0))\bigr);
    \]

\item \emph{(Fibrations)}
\label{the:Main_properties_of_the_universal_L2-torsion:fibrations}
  Let $F \xrightarrow{i} E \xrightarrow{p} B$ be a fibration. 
  Suppose that $F$ and $B$ are finite  $CW$-complexes.  
  Let $q \colon \overline{E} \to E$ be a $G$-covering. 
  Let $\overline{F} \to F$ be the $G$-covering obtained from $q$ by
  the pullback construction with $i$. Suppose that
  $\overline{F}$ is $L^2$-acyclic. Assume that $\Wh(G)$ vanishes.
Then $\overline{E}$ up to $G$-homotopy equivalence and $\overline{F}$ are $L^2$-acyclic
  finite free $G$-$CW$-complexes and we get in $\Wh^w(G)$
  \[
\rho^{(2)}_u(\overline{E};\caln(G)) =
\chi(B) \cdot \rho^{(2)}_u(\overline{F};\caln(G));
\]

\item  \emph{($S^1$-actions)}
\label{the:Main_properties_of_the_universal_L2-torsion:S1-actions}
Let $X$ be a connected finite $S^1$-$CW$-complex. Fix a base point $x \in X$.
Let $\mu \colon \pi_1(X,x) \to G$ be a group homomorphism.  Suppose that the composite
\[
\pi_1(S^1,1) \xrightarrow{\pi_1(\ev_x,1)} \pi_1(X,x) \xrightarrow{\mu} G
\]
is injective, where $\ev_x \colon S^1 \to X$ sends $z$ to $z \cdot x$.  Let $I_n$ be the
set of open $n$-dimensional $S^1$-cells of $X$. For each $e \in I_n$ choose a point $x(e)$ in
its interior and a path $w(e)$ from $x(e)$ to $x$. Denote by $S^1_{x(e)} \subseteq S^1$ the
isotropy group of $x(e)$ which must be finite because of the assumptions above.  
Let $\overline{\ev}_e \colon S^1/S^1_{x(e)} \to X$ be the
injective map which sends $z\cdot S^1_{x(e)}$ to $z \cdot x(e)$.  
Choose a homeomorphism $f_e \colon S^1 \xrightarrow{\cong} S^1/S^1_{x(e)}$ with 
$f_e(1) =1 \cdot S^1_{x(e)}$. Identify $\IZ = \pi_1(S^1,1)$.
Let $j(e) \colon \IZ \to G$ be the injective homomorphism given by the
composite
\begin{multline*}
\quad \quad \quad j(e) \colon  \IZ = \pi_1(S^1,1) \xrightarrow{\pi_1(f_e,1)} \pi_1(S^1/S^1_{x(e)},1\cdot S^1_{x(e)}) 
 \xrightarrow{\pi_1(\overline{\ev}_e,1\cdot S^1_{x(e)})}
\\
\pi_1(X, x(e)) \xrightarrow{c_{w(e)}} \pi_1(X,x) \xrightarrow{\mu} G,
\end{multline*}
where $c_{w(e)}$ is given by conjugation with the path $w(e)$. 
Denote by 
\[
j(e)_* \colon \Wh^w(\IZ) \to \Wh^w(G)
\]
the homomorphism induced by $j(e)$. Equip $\IR$ with the $\IZ$-action given by translation.
Then $\overline{X}$ is $L^2$-acyclic and we get
\[\rho^{(2)}_u(\overline{X};\caln(G)) = \sum_{n \ge 0} (-1)^n \cdot 
\sum_{e \in I_n} j(e)_* \bigl(\rho^{(2)}_u(\IR,\caln(\IZ))\bigr);
\]

\item \emph{(Poincar\'e duality)}
\label{the:Main_properties_of_the_universal_L2-torsion:Poincare_duality}
Let $M$ be an orientable $n$-dimensional manifold with free proper $G$-action and boundary $\partial M$.
Let $w \colon G \to \{\pm 1\}$ be the orientation homomorphism sending $g \in G$ to $1$, 
if multiplication with $g$ respects the orientation and to $-1$ otherwise. Equip $\Wh^w(G)$
with the involution $\ast$ coming from the $w$-twisted involution on $\IZ G$ sending
$\sum_{g \in G} r_g \cdot g$ to $\sum_{g \in G} r_g \cdot w(g) \cdot g^{-1}$. Then
\[
\rho^w_u(M,\partial M;\caln(G)) = (-1)^{n+1} \cdot \ast\bigl(\rho^w_u(M;\caln(G))\bigr).
\]
\end{enumerate}
\end{theorem}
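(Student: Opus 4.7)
The strategy is to realize Poincar\'e--Lefschetz duality at the level of finite based free $\IZ G$-chain complexes and then invoke Lemma~\ref{lem:universal_L2-torsion_and_chain_homotopy_equivalences} together with an algebraic analysis of how the $\ast$-involution on $\widetilde{K}_1^w(\IZ G)$ interacts with dualization. After choosing a smooth triangulation of $M$ together with a compatible dual cell decomposition, cap product with the $w$-twisted fundamental class $[M,\partial M]$ yields a $\IZ G$-chain homotopy equivalence of finite based free $\IZ G$-chain complexes
\[
\cap [M,\partial M] \colon C^{n-*}(M;\IZ G) \xrightarrow{\simeq} C_*(M,\partial M;\IZ G),
\]
where $C^*(M;\IZ G) := \hom_{\IZ G}(C_*(\widetilde{M}),\IZ G)$ is turned into a left $\IZ G$-chain complex via the $w$-twisted involution on $\IZ G$, with basis inherited from the dual cells. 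The classical Milnor-style duality argument for Whitehead torsion shows that this chain equivalence has trivial image in $\Wh^w(G)$, since dual cells correspond to original cells up to multiplication by an element of $\pm G$ dictated by $w$.

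The key algebraic input is the identity
\[
\rho^{(2)}_u(C^{n-*}) \,=\, (-1)^{n+1} \cdot \ast\bigl(\rho^{(2)}_u(C_*)\bigr)
\]
in $\widetilde{K}_1^w(\IZ G)$, valid for any finite based free $L^2$-acyclic $\IZ G$-chain complex $C_*$ with the $w$-twisted involution $\ast$. If $A_k$ denotes the matrix of the $k$-th differential of $C_*$, then the $k$-th differential of $C^{n-*}$ is $A_{n-k+1}^*$ with respect to dual bases, so any weak chain contraction $(\gamma,u)$ of $C_*$ dualizes to a weak chain contraction $(\gamma^*,u^*)$ of $C^{n-*}$. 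Direct comparison of the defining expression $[(uc+\gamma)_{\odd}]-[u_{\odd}]$ with its dualized counterpart then yields the sign $(-1)^{n+1}$, arising from the odd--even degree swap in total degree $n$. Alternatively, one can compare combinatorial Laplace operators $\Delta_k(C^{n-*}) = \Delta_{n-k}(C_*)^*$ and invoke Lemma~\ref{lem:rho(2)_u(C_aat)_in_terms_of_the_combinatorial_Laplace_operator}. Combining the chain equivalence above with this algebraic identity via Lemma~\ref{lem:universal_L2-torsion_and_chain_homotopy_equivalences} yields the claimed formula $\rho^w_u(M,\partial M;\caln(G)) = (-1)^{n+1}\cdot\ast(\rho^w_u(M;\caln(G)))$.

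The main obstacle is the verification that the cap product chain equivalence is, with respect to the chosen cellular and dual-cellular bases, a simple homotopy equivalence in the weak sense, so that its Whitehead torsion vanishes in $\Wh^w(G)$. This is a classical statement for Reidemeister torsion of manifolds, and its extension to the $w$-twisted weak setting requires careful accounting of signs introduced by the orientation character $w$ on each pair of a cell and its dual. A secondary subtlety is that not every weak chain contraction dualizes to one satisfying the compatibility condition $\gamma^*_n\circ u^*_n = u^*_{n+1}\circ \gamma^*_n$; we bypass this by using the Laplace-operator weak contraction from the proof of Lemma~\ref{lem:L2-acyclic_and_weak_chain_contractions}, which is manifestly self-dual, and then appealing to the well-definedness of $\rho^{(2)}_u$ to conclude independence of the choice.
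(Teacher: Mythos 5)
Your proposal addresses only assertion~(8) (Poincar\'e duality); the theorem has seven further assertions, which in the paper are disposed of by routine appeals to Lemma~\ref{lem:universal_L2-torsion_and_chain_homotopy_equivalences} (homotopy invariance), Lemma~\ref{lem:Additivity_of_universal_L2-torsion} (sum formula), the definitions (induction and restriction), and induction over equivariant cells (product formula, fibrations, $S^1$-actions). As written, your proof is therefore incomplete for the statement as a whole, even though the missing parts are comparatively soft.

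For assertion~(8) itself, your route is exactly the paper's: a simple $\IZ G$-chain homotopy equivalence $C^{n-*}(M)\to C_*(M,\partial M)$ coming from cap product with the $w$-twisted fundamental class, Lemma~\ref{lem:universal_L2-torsion_and_chain_homotopy_equivalences} to identify $\rho^{(2)}_u(C^{n-*}(M))$ with $\rho^{(2)}_u(C_*(M,\partial M))$, and then the purely algebraic identity $\rho^{(2)}_u(C^{n-*})=(-1)^{n+1}\cdot\ast\bigl(\rho^{(2)}_u(C_*)\bigr)$, which the paper asserts ``directly from the definitions'' and which you correctly elaborate, including the self-dual Laplace contraction $(\gamma,u)=(c^*,\Delta)$ as a clean way to organize the computation. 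One caveat on your proposed ``alternative'': Lemma~\ref{lem:rho(2)_u(C_aat)_in_terms_of_the_combinatorial_Laplace_operator} only computes the symmetrized quantity $\rho^{(2)}_u+\ast\rho^{(2)}_u$ in terms of the Laplacians, so comparing $\Delta_k(C^{n-*})$ with $\Delta_{n-k}(C_*)^*$ cannot by itself recover $\rho^{(2)}_u(C^{n-*})$; the direct comparison of $[(uc+\gamma)_{\odd}]-[u_{\odd}]$ with its dual, which you give as the primary argument, is the one that actually works.
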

\begin{proof}~\eqref{the:Main_properties_of_the_universal_L2-torsion:G-homotopy_invariance}
This follows from Lemma~\ref{lem:universal_L2-torsion_and_chain_homotopy_equivalences}.
\\[1mm]~\eqref{the:Main_properties_of_the_universal_L2-torsion:sum_formula}
This follows from Lemma~\ref{lem:Additivity_of_universal_L2-torsion}.
\\[1mm]~\eqref{the:Main_properties_of_the_universal_L2-torsion:induction}
This follows directly from the definitions and~\cite[Lemma~1.24~(4) on page~30]{Lueck(2002)} 
using the canonical isomorphism $\IZ G \otimes_{\IZ H} C_*(X) \cong C_*(j_*X)$.
\\[1mm]~\eqref{the:Main_properties_of_the_universal_L2-torsion:restriction}
Obviously $j^*X$ is a finite free $H$-$CW$-complex. We conclude
\[
b_n^{(2)}(j^*X;\caln(H)) = [G:H] \cdot b_n^{(2)}(X;\caln(G))
\]
from~\cite[Theorem~1.35~(9) on page~38]{Lueck(2002)}. Hence $X$ is $L^2$-acyclic if and
only if $j^* X$ is $L^2$-acyclic. Since the index $[G:H]$ is finite, there is an obvious
homomorphism $j^* \colon \Wh^w(G) \to \Wh^w(H)$ given by restriction with $j$. Now the
formula $\rho^{(2)}_u(j^* X;\caln(H)) = j^* (\rho^{(2)}_u(X;\caln(G)))$ follows from the
definitions since the restriction of the $\IZ G$-chain complex $C_*(X)$ to $\IZ H$ with $j$
agrees with $C_*(j^*X)$.
\\[1mm]~\eqref{the:Main_properties_of_the_universal_L2-torsion:product_formula}
This follows from
assertions~\eqref{the:Main_properties_of_the_universal_L2-torsion:G-homotopy_invariance},~%
\eqref{the:Main_properties_of_the_universal_L2-torsion:sum_formula}
and~\eqref{the:Main_properties_of_the_universal_L2-torsion:induction} by induction over
the equivariant cells in $Y$.
\\[1mm]~\eqref{the:Main_properties_of_the_universal_L2-torsion:fibrations} This
follows from
assertions~\eqref{the:Main_properties_of_the_universal_L2-torsion:G-homotopy_invariance}~%
and~\eqref{the:Main_properties_of_the_universal_L2-torsion:sum_formula} by induction over
the~equivariant cells in $B$.
\\[1mm]\eqref{the:Main_properties_of_the_universal_L2-torsion:S1-actions} 
This follows from assertions~\eqref{the:Main_properties_of_the_universal_L2-torsion:G-homotopy_invariance},~%
\eqref{the:Main_properties_of_the_universal_L2-torsion:sum_formula}
and~\eqref{the:Main_properties_of_the_universal_L2-torsion:induction} by induction over
the $S^1$-cells in $X$ using the fact that the finite free $\IZ$-$CW$-complex $\IR = \widetilde{S^1}$ 
is $L^2$-acyclic. 
\\[1mm]~\eqref{the:Main_properties_of_the_universal_L2-torsion:Poincare_duality}
There is a simple $\IZ G$-chain homotopy equivalence $C^{n-*}(M) \to C_*(M,\partial M)$, where
$C^{n-*}(M)$ is the dual $\IZ G$-chain complex with respect to the $w$-twisted involution.
Lemma~\ref{lem:universal_L2-torsion_and_chain_homotopy_equivalences} implies
\[
\rho^w_u(C^{n-*}(M)) = \rho^w_u(C_*(M,\partial M)).
\]
We conclude directly from the definitions.
\[
\rho^w_u(C^{n-*}(M)) = (-1)^{n+1} \cdot \ast\bigl(\rho^w_u(C_*(M)).
\]
This finishes the proof of Theorem~\ref{the:Main_properties_of_the_universal_L2-torsion}.
\end{proof}

\begin{remark}[Universal $L^2$-torsion in terms of the combinatorial Laplace operator]
\label{rem:Universal_L2-torsion_in_terms_of_the_combinatorial_Laplace_operator}
Let $M$ be an orientable $n$-dimensional manifold with free proper $G$-action and empty boundary
such that the $G$ action is orientation preserving. Suppose that the dimension of $M$ is odd.
Let $\Delta_n \colon C_*(M) \to C_*(M)$ be its combinatorial  Laplace operator.
Then we conclude from 
Lemma~\ref{lem:rho(2)_u(C_aat)_in_terms_of_the_combinatorial_Laplace_operator}
and Theorem~\ref{the:Main_properties_of_the_universal_L2-torsion}~%
\eqref{the:Main_properties_of_the_universal_L2-torsion:Poincare_duality}
that we get in $\Wh^w(G)$
\[
2 \cdot \rho^{(2)}_u(M;\caln(G)) = - \sum_{n \in \IZ} (-1)^n \cdot n \cdot [\Delta_n].
\]
The advantage of the formula above is that one can derive the right hand side directly from
the differentials without having to find  an explicit weak chain contraction. Notice that in the
case, where the dimension of $M$ is even, we do not get interesting information about the
universal $L^2$-torsion, namely, we just get
\[
0 = - \sum_{n \in \IZ} (-1)^n \cdot n \cdot  [\Delta_n].
\]
\end{remark}

\begin{example}[Torus $T^n$]
\label{exa:torus}
Let $T^n$ be the $n$-dimensional torus for $n \ge 2$.  Let $G$ be a torsion-free group. Let
$\mu \colon \pi_1(T^n) \to G$ be a non-trivial group homomorphism. Let $\overline{T^n} \to T^n$ be the
$G$-covering associated to $\mu$. Then $\overline{T^n}$ is $L^2$-acyclic and we get
\[
\rho^{(2)}_u(\overline{T^n};\caln(G)) = 0
\]
by the following argument. 

We can choose an integer $k$ with $k \ge 1$, an isomorphism 
$\nu \colon \pi_1(T^n) \xrightarrow{\cong} \IZ^k \times \IZ^{n-k}$ and an injection 
$i \colon \IZ^k \to G$ such that $\mu = i \circ \pr \circ \nu$ for 
$\pr \colon \IZ^k \times \IZ ^{n-k} \to \IZ^k$ the projection.  We can find a homeomorphism 
$f \colon T^n \xrightarrow{\cong} T^k \times T^{n-k}$ such that $\pi_1(f) = \nu$.  
Hence we obtain a $G$-homeomorphism
\[
i_*\bigl(\widetilde{T^k} \times T^{n-k}\bigr) \xrightarrow{\cong} \overline{T^n},
\]
where the $\IZ^k$-action on $\widetilde{T^k} \times T^{n-k}$ is given by the standard
$\IZ^k$-action on $\widetilde{T^k}$ and the trivial $\IZ^k$-action on $T^{n-k}$. We
conclude from Theorem~\ref{the:Main_properties_of_the_universal_L2-torsion}~%
\eqref{the:Main_properties_of_the_universal_L2-torsion:induction}
and~\eqref{the:Main_properties_of_the_universal_L2-torsion:product_formula} that
$\overline{T^n}$ is $L^2$-acyclic and
\[
\rho^{(2)}_u(\overline{T^n};\caln(G))
\,\,= \,\,
\rho^{(2)}_u(\widetilde{T^k} \times T^{n-k};\caln(\IZ^k))
\,\,=\,\,
\chi(T^{n-k}) \cdot \rho^{(2)}_u(\widetilde{T^k};\caln(\IZ^k)).\]
If $k \not= n$, then the claim follows from $\chi(T^{n-k}) = 0$. Suppose that $k = n$.
Then we have to show $\rho^{(2)}_u(\widetilde{T^k};\caln(\IZ^k)) = 0$ for $k \ge 2$. 
This follows from Theorem~\ref{the:Main_properties_of_the_universal_L2-torsion}~%
~\eqref{the:Main_properties_of_the_universal_L2-torsion:product_formula} 
applied to $\widetilde{S^1} \times \widetilde{T^{k-1}}$ using $\chi(T^{k-1}) = 0$.
\end{example}

\begin{lemma}[Realizability of the universal $L^2$-torsion for $G$-$CW$-complexes] \label{lem:realizability_G-CW}
Let $G$ be a group such that there exists a connected $L^2$-acyclic finite free $G$-$CW$-complex
$X$. Consider any element $\omega \in \Wh^w(G)$.  Then there exists a connected  $L^2$-acyclic
finite free $G$-$CW$-complex $Y$ obtained from $X$ by attaching trivially
equivariant $2$-cells and attaching equivariant $3$-cells with 
$\rho^{(2)}_u(Y;\caln(G)) = \omega$.
\end{lemma}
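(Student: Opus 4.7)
The plan is to realize $\omega' := \omega - \rho^{(2)}_u(X;\caln(G)) \in \Wh^w(G)$ as the universal $L^2$-torsion of a two-term chain complex that is grafted onto $X$ by cell attachment. First I would reduce to the algebraic problem of finding a single-generator representative: pick a $\IZ G$-endomorphism $f\colon (\IZ G)^n \to (\IZ G)^n$ such that $\Lambda(f)$ is a weak isomorphism and $[f] = \omega' \in \Wh^w(G)$. The existence of such an $f$ is afforded by Theorem~\ref{the:chain_complexes_versus_automorphisms}: any class in $\widetilde{K}_1^w(\IZ G)$ is the image under $\rho^{(2)}_u$ of an element of the form $\el(f)$, and contributions may be merged using the composition relation $[f_2 \circ f_1] = [f_1] + [f_2]$ and the block-triangular stabilization relation.

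Given such $f$, I would build $Y$ from $X$ in two stages. First, attach $n$ trivially equivariant 2-cells to $X$ via constant attaching maps at a chosen base point $x_0 \in X$; the resulting complex $X_1$ has cellular $\IZ G$-chain complex $C_*(X) \oplus [\,0 \to (\IZ G)^n \to 0\,]$ with the new summand sitting in degree $2$ with zero differential. Then attach $n$ equivariant $3$-cells to $X_1$, choosing each attaching map $S^2 \to X_1^{(2)}$ so that the $i$-th new $3$-cell has boundary whose projection onto the new $2$-cells is the $i$-th column of $f$. By the Hurewicz theorem applied to the universal covering, the $n$ newly attached 2-cells provide a free $(\IZ G)^n$-summand of $\pi_2(X_1^{(2)})$, so any desired class can be realized. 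Let $Y$ be the resulting finite free $G$-$CW$-complex.

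To verify the conclusion, I would exploit the based short exact sequence of finite based free $\IZ G$-chain complexes
\[
0 \to C_*(X) \to C_*(Y) \to E_* \to 0,
\]
where $E_* = \bigl[(\IZ G)^n \xrightarrow{f} (\IZ G)^n\bigr]$ is concentrated in degrees $3$ and $2$. Since $\Lambda(f)$ is a weak isomorphism, $\Lambda(E_*)$ is $L^2$-acyclic, and a direct computation with the weak chain contraction $\gamma_2 = f^*$, $\gamma_3 = 0$, $u_n = \Delta_n$ yields $\rho^{(2)}_u(E_*) = [f]$. Lemma~\ref{lem:Additivity_of_universal_L2-torsion} then delivers both the $L^2$-acyclicity of $C_*(Y)$ and the identity
\[
\rho^{(2)}_u(Y;\caln(G)) = \rho^{(2)}_u(X;\caln(G)) + \rho^{(2)}_u(E_*) = \rho^{(2)}_u(X;\caln(G)) + [f] = \omega.
\]
Connectivity of $Y$ is automatic because only cells of dimension $\ge 2$ are attached.

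The main obstacle is the first step, namely finding a single-generator representative $[f] \in \Wh^w(G)$ for the class $\omega' = \omega - \rho^{(2)}_u(X;\caln(G))$. Once this algebraic reduction is in hand the geometric construction is a routine cell-attachment argument and the torsion computation is immediate from the additivity lemma together with the direct evaluation of $\rho^{(2)}_u$ on a two-term $L^2$-acyclic chain complex.
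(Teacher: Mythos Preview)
Your approach is essentially identical to the paper's: choose a square matrix $A$ over $\IZ G$ (your $f$) representing $\omega - \rho^{(2)}_u(X;\caln(G))$ in $\Wh^w(G)$, wedge on $n$ free $2$-spheres to form $X'$, attach $n$ equivariant $3$-cells whose cellular boundaries realize the rows of $A$, and conclude via Lemma~\ref{lem:Additivity_of_universal_L2-torsion} applied to the based exact sequence $0 \to C_*(X) \to C_*(Y) \to D_* \to 0$ with $D_*$ the two-term complex $r_A$ sitting in degrees $2$ and $3$. The paper carries out exactly this construction.

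The concern you flag as ``the main obstacle'' is legitimate, and the paper simply asserts ``Choose a matrix $A \in M_{n,n}(\IZ G)$ such that \ldots\ $[r_A] = \omega - \rho^{(2)}_u(X;\caln(G))$'' without justification. In fact this assertion is not true in general: for $G = \IZ$ the isomorphism of Example~\ref{exa:G_is_Z} identifies $\Wh^w(\IZ)$ with $\IQ(z)^{\times}/\{\pm z^n\}$ and sends $[r_A]$ to the class of $\det_{\IZ[\IZ]}(A) \in \IZ[z,z^{-1}]\setminus\{0\}$, so the class of $1/2$ is not of the form $[r_A]$. Consequently the lemma as stated (with only $2$-cells and $3$-cells) is slightly too strong. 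What one \emph{can} always do, using the block-triangular and composition relations, is write $\omega - \rho^{(2)}_u(X;\caln(G)) = [r_A] - [r_B]$ for two square matrices $A,B$. One then also attaches trivially equivariant $3$-cells and equivariant $4$-cells carrying $r_B$, so that the quotient $C_*(Y,X)$ is the direct sum $\Sigma^2\el(r_A)\oplus\Sigma^3\el(r_B)$ with universal $L^2$-torsion $[r_A]-[r_B]$. With this amendment (cells in dimensions $2$, $3$, $4$) the argument goes through; your identification of the obstacle is exactly on point.
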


\begin{proof}
Choose a matrix $A \in M_{n,n}(\IZ G)$ such that $\Lambda(r_A)$ is a weak isomorphism
and $[r_A] = \omega - \rho^{(2)}_u(X;\caln(G))$ holds in $\Wh^w(G)$. 
Choose base points
$x \in X$ and $s \in S^2$.  Let $k \colon G \to X$ be the $G$-map sending $g$ to $g \cdot x$ 
and let $l \colon G \to G \times S^2$ be the $G$-map sending $g$ to $(g,s)$.  Let $X'$
 be the finite free $G$-$CW$-complex $X'$ given by the $G$-pushout
\[
\xymatrix@R0.6cm{\coprod_{j = 1}^n  G \ar[r]^-{\coprod_{i = 1}^n k} \ar[d]_{\coprod_{j = 1}^n l}
& X \ar[d]
\\
\coprod_{j = 1} G \times S^2 \ar[r] 
&
X'.
}
\] 
For $g \in G$ choose a path $v_g$ in $X$ from $x$ to $gx$.  Let $t_g \colon \pi_2(X',gx)
\to \pi_2(X',x)$ be the standard isomorphism of abelian groups given by $v_g$.  Fix elements
$i,j \in \{1,2, \ldots, n\}$.  Let $a[i,j] = \sum_{g \in G} a[i,j]_g \cdot g \in \IZ G$ be the
entry of $A$ at $(i,j)$.  Choose for $g \in G$ a pointed map $q[i,j]_g \colon (S^2,s) \to (X',x)$ 
such that its  class $[q[i,j]_g]$ in $\pi_2(X',x)$ 
is $a[i,j]_g$-times the image under $t_g \colon \pi_2(X',gx) \to \pi_2(X',x)$ of the 
element in $\pi_2(X',gx)$ given by the composite $S^2 \to G \times S^2, y \mapsto (g,y)$ with 
the inclusion of the $j$-th summand of $\coprod_{j = 1} G \times S^n$ into $X'$. 
Let $q_i\colon (S^2,s) \to (X',x)$ be a pointed map representing in $\pi_2(X',x)$ 
the element $\sum_{j = 1}^n \sum_{g \in G} [q[i,j]_g]$. Let 
$\widehat{q_i} \colon G \times S^2 \to X$ be the $G$-map  sending $(g,z)$ to $g \cdot q_i(z)$. 
Define a finite free $G$-$CW$-complex  $Y$ by the $G$-pushout
\[
\xymatrix@!C=6em@R=0.5cm{\coprod_{i = 1}^n G \times S^2 \ar[r]^-{\coprod_{i = 1}^n \widehat{q_i}} \ar[d]
& X' \ar[d]
\\
\coprod_{i = 1}^n G \times D^3 \ar[r]
&
Y.
}
\] 
By construction there is a based  exact sequence of finite based free $\IZ G$-chain complexes 
$0 \to C_*(X) \to C_*(Y) \to D_* \to 0$, where $D_*$ is concentrated 
in dimensions $2$ and $3$ and has as third differential $r_A \colon \IZ G^n \to \IZ G^n$.
Since $\Lambda(C_*(X))$ and $\Lambda(D_*)$ 
are $L^2$-acyclic, Lemma~\ref{lem:Additivity_of_universal_L2-torsion} implies that also 
$\Lambda(C_*(Y))$ is $L^2$-acyclic and we get in $\Wh^w(G)$
\[
\begin{array}{rcl}
\rho^{(2)}_u(Y)
& = &
\rho^{(2)}_u(C_*(Y;\caln(G))\\
& = & 
\rho^{(2)}_u(C_*(X;\caln(G)) + \rho^{(2)}_u(D_*;\caln(G))
\\
& = & 
\rho^{(2)}_u(C_*(X;\caln(G)) + [r_A \colon \IZ G^n \to \IZ G^n]
\\
& = & 
\rho^{(2)}_u(C_*(X;\caln(G))  + \omega - \rho^{(2)}_u(C_*(X;\caln(G)) 
\,\, = \,\,
\omega.\qedhere
\end{array}
\]
\end{proof}

\begin{lemma}[Realizability of the universal $L^2$-torsion for manifolds without boundary
  and cocompact free proper $G$-action]
  \label{lem:realizability_manifolds}
  Let $G$ be a group such that there exists a connected $L^2$-acyclic finite free
  $G$-$CW$-complex $X$. Consider any element $\omega \in \Wh^w(G)$ and
  any integer $d \ge 2 \cdot \max\{\dim(X),3\} + 1$.  Then there exists a connected smooth
  $d$-dimensional manifold $M$ without boundary and cocompact free proper smooth
  $G$-action such that $M$ is $L^2$-acyclic and we have
  \[
  \rho^{(2)}_u(M;\caln(G)) = (-1)^{d+1} \cdot \ast(\omega) + \omega.
  \]
Furthermore, if $X$ is simply-connected, then $M$ can also be chosen to be simply-connected.
\end{lemma}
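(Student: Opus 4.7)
The plan is to realize $\omega$ as the universal $L^2$-torsion of a finite free $G$-$CW$-complex, thicken that complex to a compact smooth $d$-manifold with boundary, and then double to remove the boundary; Poincar\'e duality combined with the sum formula will then pin down the torsion of the resulting closed manifold.

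First, I would invoke Lemma~\ref{lem:realizability_G-CW} to obtain a connected $L^2$-acyclic finite free $G$-$CW$-complex $Y$ with $\rho^{(2)}_u(Y;\caln(G))=\omega$. The construction there attaches only trivially equivariant $2$-cells and equivariant $3$-cells to $X$, so $\dim Y \le \max\{\dim X,3\}$, and $\pi_1(Y)=\pi_1(X)$ because attaching $2$-spheres and $3$-disks does not alter $\pi_1$. Set $K:=Y/G$, a finite $CW$-complex of the same dimension. The hypothesis $d\ge 2\max\{\dim X,3\}+1\ge 2\dim K +1$ is precisely the classical threshold allowing me to embed $K$ as a compact polyhedron in $\IR^d$ and, in this codimension, take a smooth regular neighborhood $N\subset\IR^d$: a compact smooth $d$-submanifold with boundary that collapses onto $K$ via a simple homotopy equivalence $r\colon N\to K$. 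Let $\widetilde N\to N$ be the $G$-covering obtained by pulling $Y\to K$ back along $r$. Then $\widetilde N$ carries a cocompact free proper smooth $G$-action, and the lift of $r$ to a $G$-map $\widetilde N\to Y$ is a $G$-simple homotopy equivalence, so Theorem~\ref{the:Main_properties_of_the_universal_L2-torsion}~\eqref{the:Main_properties_of_the_universal_L2-torsion:G-homotopy_invariance} gives $\rho^{(2)}_u(\widetilde N;\caln(G))=\omega$ and, in particular, $\widetilde N$ is $L^2$-acyclic.

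Next, I would take $M:=\widetilde N\cup_{\partial\widetilde N}\widetilde N$, the double of $\widetilde N$: a closed orientable smooth $d$-manifold on which $G$ acts cocompactly, freely, and smoothly. To compute $\rho^{(2)}_u(M;\caln(G))$ I would first apply Poincar\'e duality in the form of Theorem~\ref{the:Main_properties_of_the_universal_L2-torsion}~\eqref{the:Main_properties_of_the_universal_L2-torsion:Poincare_duality}:
\[
\rho^{(2)}_u(\widetilde N,\partial\widetilde N;\caln(G)) \,=\, (-1)^{d+1}\,\ast(\omega).
\]
The analogous Poincar\'e duality on $L^2$-homology shows $H^{(2)}_*(\widetilde N,\partial\widetilde N;\caln(G))=0$, so the long exact sequence of the pair, combined with $L^2$-acyclicity of $\widetilde N$, forces $\partial\widetilde N$ to be $L^2$-acyclic. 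Lemma~\ref{lem:Additivity_of_universal_L2-torsion} applied to $0\to C_*(\partial\widetilde N)\to C_*(\widetilde N)\to C_*(\widetilde N,\partial\widetilde N)\to 0$ then yields
\[
\rho^{(2)}_u(\partial\widetilde N;\caln(G)) \,=\, \omega-(-1)^{d+1}\,\ast(\omega).
\]
Finally, the sum formula Theorem~\ref{the:Main_properties_of_the_universal_L2-torsion}~\eqref{the:Main_properties_of_the_universal_L2-torsion:sum_formula} applied to the pushout $\widetilde N\leftarrow \partial\widetilde N\to \widetilde N$ defining the double gives
\[
\rho^{(2)}_u(M;\caln(G)) \,=\, 2\omega-\rho^{(2)}_u(\partial\widetilde N;\caln(G)) \,=\, \omega+(-1)^{d+1}\ast(\omega),
\]
which is the desired identity.

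For the simply-connected addendum, if $X$ is simply-connected then so is $Y$ by the observation above, and hence so is $\widetilde N$ (which deformation retracts onto $Y$); van Kampen applied to the double pushout then gives $\pi_1(M)=0$ regardless of $\pi_1(\partial\widetilde N)$. The main technical point requiring care is the existence of a smooth regular neighborhood of $K$ in $\IR^d$ admitting a simple collapse $N\searrow K$; this is standard in the PL category and transfers to the smooth category in the dimension range $d\ge 7$ implied by the hypothesis via smoothing theory, but should be invoked with the appropriate references.
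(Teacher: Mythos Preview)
Your proof is correct and follows essentially the same route as the paper: realize $\omega$ on a finite $G$-$CW$-complex via Lemma~\ref{lem:realizability_G-CW}, thicken the quotient in $\IR^d$ to a regular neighborhood $N$, double the $G$-cover, and compute via Poincar\'e duality and the sum formula. The only cosmetic difference is that the paper passes directly from $\rho^{(2)}_u(\widetilde N)-\rho^{(2)}_u(\partial\widetilde N)+\rho^{(2)}_u(\widetilde N)$ to $\rho^{(2)}_u(\widetilde N)+\rho^{(2)}_u(\widetilde N,\partial\widetilde N)$ rather than first solving for $\rho^{(2)}_u(\partial\widetilde N)$, but this is the same computation rearranged.
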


\begin{proof}
  By Lemma~\ref{lem:realizability_G-CW} we can find a connected $L^2$-acyclic finite free
  $G$-$CW$-complex $Y$ of dimension $\max\{3,\dim(X)\}$ with $\rho^{2}(Y;\caln(G)) =
  \omega$. We can embed $Y/G$ into $\IR^d$ and choose a regular neighborhood $N$.  This
  is a compact manifold $N$ with boundary $\partial N$ such that the inclusion $i \colon Y  \to N$ 
  is a simple homotopy equivalence. Let $\overline{N} \to N$ be the $G$-covering obtained from the
  $G$-covering $Y \to Y/G$ by the pullback construction applied to any homotopy inverse of
  $i \colon Y \to N$.  Let $\overline{\partial N} \to \partial N$ be the restriction of
  $\overline{N} \to N$ to $\partial N$. Since $Y$ is $L^2$-acyclic, $\overline{N}$ is
  $L^2$-acyclic. We conclude from Poincar\'e duality and the long weak exact
  $L^2$-homology sequence, see~\cite[Theorem~1.21 on page~27 and Theorem~1.35 on
  page~37]{Lueck(2002)} that $\overline{\partial N}$ and $(\overline{N},\overline{\partial N})$ 
  are $L^2$-acyclic. Let $M$ be $\overline{N} \cup_{\overline{\partial N}} \overline{N}$.
  This is a smooth manifold without boundary with proper free smooth
  $G$-action. One easily checks using Lemma~\ref{lem:Additivity_of_universal_L2-torsion} 
  and Theorem~\ref{the:Main_properties_of_the_universal_L2-torsion}
  that $N$ is $L^2$-acyclic and we get in $\Wh^w(G)$
  \begin{eqnarray*}
    \rho^{(2)}_u(M;\caln(G)) 
    & = &
    \rho_u^{(2)}(\overline{N};\caln(G)) -   \rho_u^{(2)}(\overline{\partial N};\caln(G)) + \rho_u^{(2)}(\overline{N};\caln(G)) 
    \\
    & = & 
    \rho_u^{(2)}(\overline{N};\caln(G)) + \rho_u^{(2)}(\overline{N}, \overline{\partial N};\caln(G)) 
    \\
    & = & 
    \rho_u^{(2)}(\overline{N};\caln(G)) + (-1)^{d+1} \cdot \ast\bigl(\rho_u^{(2)}(\overline{N};\caln(G))\bigr)
    \\
    & = &\omega + (-1)^{d+1} \cdot \ast(\omega).
  \end{eqnarray*}
  Notice that for an element $\omega \in \Wh^w(G)$ the equality $\omega = (-1)^{d+1} \cdot \ast(\omega)$ 
  is a necessary condition for $\omega$  to be realized as
  $\omega = \rho^{(2)}_u(M;\caln(G))$ for a smooth orientable manifold $M$ without boundary and
  proper free orientation preserving $G$-action such that $M$ is $L^2$-acyclic,
 see Theorem~\ref{the:Main_properties_of_the_universal_L2-torsion}~%
\eqref{the:Main_properties_of_the_universal_L2-torsion:Poincare_duality}.
 The construction above shows that for an element $\omega \in \Wh^w(G)$
  satisfying $\omega = (-1)^{d+1} \cdot \ast(\omega)$ we can find a  smooth orientable manifold $M$
  without boundary and proper free orientation preserving $G$-action such that $M$ is $L^2$-acyclic and
  $\rho^{(2)}_u(M;\caln(G)) = 2 \cdot \omega$.
  
  Finally, if $X$ is simply-connected, then $\overline{N}$ is simply-connected and $M$ is also simply-connected.
\end{proof}


\subsection{The universal $L^2$-torsion for universal coverings}
\label{subsec:The_universal_torsion_for_universal_coverings}

The most natural and interesting case is the one of a universal covering. For the reader's
convenience we record the basic properties of the universal $L^2$-torsion in this setting.

\begin{definition}[Universal $L^2$-torsion for universal coverings]
  \label{def:universal_L2-torsion_for_universal_coverings}
  Let $X$ be a finite connected $CW$-complex. We call it \emph{$L^2$-acyclic} if its universal covering
  $\widetilde{X}$ is $L^2$-acyclic, i.e., the $n$-th $L^2$-Betti number
  $b_n^{(2)}(\widetilde{X}) := b_n^{(2)}(\widetilde{X};\caln(\pi_1(X)))$ vanishes for all
  $n \ge 0$.  Then we define its \emph{universal $L^2$-torsion}
  \[
  \rho^{(2)}_u(\widetilde{X}) \in \Wh^w(\pi_1(X))
  \]
  by $\rho^{(2)}_u(\widetilde{X};\caln(\pi_1(X)))$ as introduced in
  Definition~\ref{def:universal_L2-torsion_for_G-CW-complexes}.

  If $X$ is a finite $CW$-complex, we call it \emph{$L^2$-acyclic} if each path component 
  $C \in   \pi_0(C)$ is $L^2$-acyclic in the sense above and we define
\begin{eqnarray*}
\Wh^w(\Pi(X)) 
& := & 
\bigoplus_{C \in \pi_0(X)} \Wh^w(\pi_1(C));
\\
\rho^{(2)}_u(\widetilde{X})  & := & \bigl(\rho^{(2)}_u(\widetilde{C})\bigr)_{C \in \pi_0(X)} \quad \in \Wh^w(\Pi(X)).
\end{eqnarray*}

This definition extends to $CW$-pairs $(X,A)$
by
\[\rho^{(2)}_u(\widetilde{X},\widetilde{A})  
:=  \bigl(\rho^{(2)}_u(\widetilde{C}),\widetilde{A \cap C}\bigr)_{C \in \pi_0(X)} \quad \in \Wh^w(\Pi(X)),
\] 
where we denote  for a path component $C$ of $X$ by $\widetilde{A \cap C} \to A \cap C$
 the restriction of the universal covering $\widetilde{C} \to C$ to $A \cap C$.
\end{definition}

Given a map $f \colon X \to Y$ of finite $C$-complexes such that 
$\pi_1(f,x) \colon \pi_1(X,x) \to \pi_1(Y,f(x))$ is injective for all $x \in X$, we get a homomorphism 
$f_* \colon \Wh^w(\Pi(X)) \to \Wh^w(\Pi(Y)$ by the collection of maps 
$(f|_C)_* \colon \Wh^w(\pi_1(C)) \to \Wh^w(\pi_1(D))$ for $C \in \pi_0(X)$ 
and $D \in \pi_1(Y)$ with $f(C) \subseteq D$.

It is evident that Theorem~\ref{the:Main_properties_of_the_universal_L2-torsion} gives
rise to statements about the universal $L^2$-torsions for universal coverings.  Most
statements of Theorem~\ref{the:Main_properties_of_the_universal_L2-torsion} specialize in
an obvious way. Therefore in the next theorem we spell out only three properties.

\begin{theorem}[Main properties of the universal $L^2$-torsion for universal coverings]\ 
\label{the:Main_properties_of_the_universal_L2-torsion_for_universal_coverings}
\begin{enumerate}
\item[$(2)$] \emph{(Sum formula)}
Consider a pushout of  finite $CW$-complexes
    \[
    \xymatrix@R0.6cm{ X_0 \ar[r] \ar[rd]^{j_0} \ar[d] & X_1  \ar[d]^{j_1}
      \\
      X_2 \ar[r]^{j_2} & X }
    \]
    where the upper horizontal arrow is cellular, the left vertical arrow is an inclusion
    of $CW$-complexes and $X$ has the obvious $CW$-structure coming from the ones
    on $X_0$, $X_1$ and $X_2$.  Suppose that $X_0$, $X_1$ and $X_2$ are $L^2$-acyclic
    and  that for $i = 0,1,2$ and any point $x_i \in X$ the homomorphism 
     $\pi_1(j_i,x_i) \colon \pi_1(X_i,x_i) \to \pi_1(X,j_i(x_i))$ is injective.
 Then $X$ is $L^2$-acyclic and we get
    \[
  \hspace{1cm}  \rho^{(2)}_u(\widetilde{X}) = (j_1)_*\bigl(\rho^{(2)}_u (\widetilde{X_1})\bigr) + 
    (j_2)_*\bigl(\rho^{(2)}_u(\widetilde{X_2})\bigr) -     (j_0)_*\bigl(\rho^{(2)}_u(\widetilde{X_0})\bigr).
    \]
\item[$(5)$] \emph{(Fibrations)}
  Let $F \xrightarrow{i} E \xrightarrow{p} B$ be a fibration. Suppose that $F$and $B$ are connected finite $CW$-complexes.  
  Assume that $\pi_1(i) \colon \pi_1(F) \to \pi_1(E)$ is injective and that  $F$ is $L^2$-acyclic. Suppose that $\Wh(\pi_1(E))$ vanishes.
  Then $E$ is up to homotopy an $L^2$-acyclic connected finite $CW$-complex  and we get
  \[
\rho^{(2)}_u(\widetilde{E}) =
\chi(B) \cdot i_*\bigl(\rho_u^{(2)}(\widetilde{F})\bigr).
\]
\item[$(6)$]  \emph{($S^1$-actions)}
Let $X$ be a connected finite $S^1$-$CW$-complex. If we use the notation and make the assumptions
of Theorem~\ref{the:Main_properties_of_the_universal_L2-torsion}~\eqref{the:Main_properties_of_the_universal_L2-torsion:S1-actions}
in the special case $G = \pi_1(X)$ and $\mu = \id_{\\pi_1(X)}$, then $X$ is $L^2$-acyclic and we get
\[
\rho^{(2)}_u(\widetilde{X}) 
= \sum_{n \ge 0} (-1)^n \cdot  \sum_{e \in I_n} j(e)_* \bigl(\rho^{(2)}_u(\widetilde{S^1})\bigr).
\]
\end{enumerate}
\end{theorem}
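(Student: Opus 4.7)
The plan is to deduce each of the three items directly from the corresponding assertion of Theorem~\ref{the:Main_properties_of_the_universal_L2-torsion}, applied to the universal covering regarded as a $\pi_1$-covering. The only non-formal ingredient needed is a careful identification of the pullback of $\widetilde{X}\to X$ along a map $f\colon Y\to X$ with the induced $\pi_1(X)$-$CW$-complex $\pi_1(X)\times_{\pi_1(Y)}\widetilde{Y}$, which holds as soon as $\pi_1(f)$ is injective. Combined with the induction assertion~\eqref{the:Main_properties_of_the_universal_L2-torsion:induction}, this identification will translate each equivariant formula of Theorem~\ref{the:Main_properties_of_the_universal_L2-torsion} into the universal-cover formula claimed here.

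For the sum formula I would set $G = \pi_1(X)$ and pull the universal covering $\widetilde{X}\to X$ back along each $j_i$ to obtain a $G$-covering $\overline{X_i}\to X_i$. Using the injectivity of $\pi_1(j_i)$ I would identify $\overline{X_i}$ $G$-equivariantly with the induction $(j_i)_*\widetilde{X_i} = G\times_{\pi_1(X_i)}\widetilde{X_i}$. The original pushout then lifts to a $G$-$CW$-pushout of the $\overline{X_i}$ with pushout $\widetilde{X}$, so Theorem~\ref{the:Main_properties_of_the_universal_L2-torsion}~\eqref{the:Main_properties_of_the_universal_L2-torsion:sum_formula} yields the sum formula for these $G$-coverings, and assertion~\eqref{the:Main_properties_of_the_universal_L2-torsion:induction} then rewrites each term $\rho^{(2)}_u(\overline{X_i};\caln(G))$ as $(j_i)_*\rho^{(2)}_u(\widetilde{X_i})$.

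For the fibration statement I would apply Theorem~\ref{the:Main_properties_of_the_universal_L2-torsion}~\eqref{the:Main_properties_of_the_universal_L2-torsion:fibrations} with $G=\pi_1(E)$ and $\overline{E}=\widetilde{E}$. Injectivity of $\pi_1(i)$ identifies the pulled-back $G$-covering $\overline{F}\to F$ with $G\times_{\pi_1(F)}\widetilde{F}$, so~\eqref{the:Main_properties_of_the_universal_L2-torsion:induction} shows that $\overline{F}$ is $L^2$-acyclic and $\rho^{(2)}_u(\overline{F};\caln(G)) = i_*\rho^{(2)}_u(\widetilde{F})$. The hypothesis $\Wh(\pi_1(E))=0$ legitimises the appeal to assertion~\eqref{the:Main_properties_of_the_universal_L2-torsion:fibrations} and delivers the desired formula. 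The $S^1$-action statement needs essentially no additional work: it is precisely Theorem~\ref{the:Main_properties_of_the_universal_L2-torsion}~\eqref{the:Main_properties_of_the_universal_L2-torsion:S1-actions} applied with $G=\pi_1(X)$ and $\mu=\id_{\pi_1(X)}$, since in this case the $G$-covering used there is the universal covering $\widetilde{X}$.

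The hardest part, though still routine, will be to verify that when $\pi_1(Y)\to \pi_1(X)$ is injective the pullback of $\widetilde{X}\to X$ along an inclusion $Y\hookrightarrow X$ is $\pi_1(X)$-homeomorphic to $\pi_1(X)\times_{\pi_1(Y)}\widetilde{Y}$, and in the sum-formula case that the pushout structure lifts to the $G$-level. This is the single place where the injectivity hypotheses really enter; once it is in hand each of the three items reduces mechanically to the corresponding assertion of Theorem~\ref{the:Main_properties_of_the_universal_L2-torsion}.
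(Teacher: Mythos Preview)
Your proposal is correct and matches the paper's approach: the paper does not give an explicit proof of this theorem, instead stating that it is evident that Theorem~\ref{the:Main_properties_of_the_universal_L2-torsion} gives rise to the claimed statements for universal coverings as obvious specializations. You have correctly filled in precisely the mechanism by which this specialization works, namely the identification of the pullback of $\widetilde{X}$ with the induced space $\pi_1(X)\times_{\pi_1(X_i)}\widetilde{X_i}$ via assertion~\eqref{the:Main_properties_of_the_universal_L2-torsion:induction}, which is exactly the detail the paper leaves implicit.
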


\begin{remark}[Realizability of the universal $L^2$-torsion for universal coverings.]
\label{rem:realizability_universal_coverings}
Let $\pi$ be group such that there exists a connected finite $CW$-complex $X$ with 
$\pi =  \pi_1(X)$ which is $L^2$-acyclic. Consider any
element $\omega \in \Wh^w(\pi)$. As a special case of
Lemma~\ref{lem:realizability_G-CW} we get
that there is a connected finite $CW$-complex $Y$ with $\pi = \pi_1(Y)$
such that $Y$ is $L^2$-acyclic and $\rho^{(2)}_u(\widetilde{Y}) = \omega$.

Also Lemma~\ref{lem:realizability_manifolds} has an obvious analogue for $\rho^{(2)}_u(\widetilde{M})$
for a closed manifold  $M$.
\end{remark}

\begin{example}[$\rho^{(2)}_u(\widetilde{T^n})$]
\label{exa:rho(2)_u(widetilde(Tn)}
If $n \ge 2$, we conclude $\rho^{(2)}_u(\widetilde{T^n}) = 0$ from Example~\ref{exa:torus}.
We have computed $\rho^{(2)}_u(\widetilde{S^1})$ in Example~\ref{exa:G_is_Z}.
\end{example}

\begin{theorem}[Jaco-Shalen-Johannson decomposition]
\label{lem:JSJ_decom} 
   Let $M$ be an admissible  $3$-manifold and let
  $M_1$, $M_2$, \ldots, $M_r$ be its pieces in the Jaco-Shalen-Johannson decomposition. 
  Let $j_i \colon \pi_1(M_i) \to \pi_1(M)$ be the injection induced by the inclusion $M_i \to M$.
  Then each $M_i$ and $M$ are $L^2$-acyclic  and we have
\[
\rho^{(2)}_u(\widetilde{M}) \,=\, \sum_{i = 1}^r (j_i)_*\bigl(\rho^{(2)}_u(\widetilde{M_i})\bigr).
\]
\end{theorem}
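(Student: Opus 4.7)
My plan is to apply the sum formula for the universal $L^2$-torsion of universal coverings (Theorem~\ref{the:Main_properties_of_the_universal_L2-torsion_for_universal_coverings}(2)) to the decomposition of $M$ along its family of JSJ tori. Writing $\mathcal{T} = T^1 \sqcup \cdots \sqcup T^s$ for the disjoint union of the JSJ tori, letting $N \cong \mathcal{T} \times [-1,1]$ be a regular neighborhood, and setting $M_0 := \overline{M \setminus N} \cong M_1 \sqcup \cdots \sqcup M_r$, the manifold $M$ is recovered as the pushout of $M_0$ and $N$ along $\partial N \cong \mathcal{T} \sqcup \mathcal{T}$. Applying the sum formula to this pushout (iterating one torus at a time if one prefers to remain in the connected case) and using additivity of $\Wh^w(\Pi(-))$ over path components yields
\[
\rho^{(2)}_u(\widetilde{M}) \;=\; (j_{M_0})_*\bigl(\rho^{(2)}_u(\widetilde{M_0})\bigr) + (j_N)_*\bigl(\rho^{(2)}_u(\widetilde{N})\bigr) - (j_{\partial N})_*\bigl(\rho^{(2)}_u(\widetilde{\partial N})\bigr).
\]
Every component of $N$ and of $\partial N$ is homotopy equivalent to a $2$-torus, so Example~\ref{exa:rho(2)_u(widetilde(Tn)} together with homotopy invariance kills the last two terms, and the first splits as $\sum_{i=1}^r (j_i)_*\bigl(\rho^{(2)}_u(\widetilde{M_i})\bigr)$, as desired.

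Before invoking the sum formula I must establish $L^2$-acyclicity of each $M_i$. Each JSJ piece is either Seifert fibered with infinite fundamental group or admits a complete hyperbolic structure of finite volume with toroidal cusps. In the Seifert case, $M_i$ carries an $S^1$-action (with at worst finite isotropy at exceptional fibers) whose generic orbit represents an element of infinite order in $\pi_1(M_i)$; hence the $S^1$-action formula (Theorem~\ref{the:Main_properties_of_the_universal_L2-torsion}(7), specialized to $G = \pi_1(M_i)$ and $\mu = \id$) delivers both $L^2$-acyclicity of $\widetilde{M_i}$ and an explicit description of $\rho^{(2)}_u(\widetilde{M_i})$ in terms of the orbit structure. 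In the hyperbolic case, $L^2$-acyclicity is a classical result of Lott already cited in the introduction of this paper, coming from the analytic computation of $L^2$-Betti numbers of finite-volume hyperbolic $3$-manifolds. Once each $M_i$ is known to be $L^2$-acyclic, the sum formula automatically delivers $L^2$-acyclicity of $M$ itself.

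The main obstacle is checking the $\pi_1$-injectivity hypothesis of the sum formula for every map in the pushout, since this hypothesis is essential for the universal-covering version of Theorem~\ref{the:Main_properties_of_the_universal_L2-torsion_for_universal_coverings}(2). By construction, JSJ tori are incompressible in $M$, and each JSJ piece has incompressible boundary, so each inclusion $\pi_1(T^j) \hookrightarrow \pi_1(M)$ and each inclusion $\pi_1(M_i) \hookrightarrow \pi_1(M)$ is injective; this is just the familiar graph-of-groups splitting of $\pi_1(M)$ with vertex groups $\pi_1(M_i)$ and edge groups $\pi_1(T^j)$, in which every vertex and edge group injects into the total group. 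A secondary technical point is that the corners $M_0$ and $\partial N$ of the pushout are generally disconnected, which is handled by the $\Wh^w(\Pi(-))$ formalism of Definition~\ref{def:universal_L2-torsion_for_universal_coverings}; alternatively one can iterate the sum formula one torus at a time, distinguishing separating JSJ tori (a two-piece pushout) from non-separating ones (a self-gluing pushout of the cut-open manifold against $T \times [-1,1]$).
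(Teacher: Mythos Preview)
Your proof is correct and follows essentially the same route as the paper: apply the sum formula of Theorem~\ref{the:Main_properties_of_the_universal_L2-torsion_for_universal_coverings}(2) to the JSJ splitting, use Example~\ref{exa:torus} (equivalently Example~\ref{exa:rho(2)_u(widetilde(Tn)}) to kill the torus contributions, and invoke $L^2$-acyclicity of the pieces. The only minor deviation is that the paper cites~\cite[Theorem~0.1]{Lott-Lueck(1995)} uniformly for $L^2$-acyclicity of all JSJ pieces, whereas you split into the Seifert case (handled via the $S^1$-action formula) and the hyperbolic case; both are fine, and your treatment is more self-contained for the Seifert pieces.
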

\begin{proof}
Each piece $M_i$ is $L^2$-acyclic by~\cite[Theorem~0,1]{Lott-Lueck(1995)}.
Now the claim follows from Example~\ref{exa:torus} and
Theorem~\ref{the:Main_properties_of_the_universal_L2-torsion_for_universal_coverings}~(2).
\end{proof}

\begin{remark}[Graph manifolds]
  \label{rem:graph_manifolds}
We recall that a \emph{Seifert manifold} is loosely speaking a singular $S^1$-bundle over a surface. A \emph{graph manifold} is a 3-manifold that is obtained by gluing Seifert manifolds along boundary tori.
We refer to \cite{Aschenbrenner-Friedl-Wilton(2015)} for precise definitions.
  There is an obvious analogue of
  Theorem~\ref{the:Main_properties_of_the_universal_L2-torsion_for_universal_coverings}~(6)
  for Seifert manifolds $M$ with infinite fundamental groups where the role of $S^1$ is
  played by the regular fiber whose inclusion to $M$ always induces an injection on the
  fundamental groups, and the cells corresponds to tubular neighborhoods of fibers. So we
  get again a formula for $\rho^{(2)}_u(\widetilde{M})$ which essentially reduces the
  computation to the one of $\rho^{(2)}_u(\widetilde{S^1})$. In view of
  Theorem~\ref{lem:JSJ_decom} this extends to graph manifolds.  The hyperbolic pieces in
  the Jaco-Shalen-Johannson decomposition are much harder to deal with.
\end{remark}


\subsection{Mapping tori}
\label{subsec:Mapping_tori}

  Let $f \colon X   \to X$ be a self-map of a connected finite $CW$-complex.  
  Denote by  $T_f$ the mapping torus. For $p \colon T_f \to S^1$ the obvious projection, 
  consider any   factorization 
  $\pi_1(p) \colon \pi_1(T_f) \xrightarrow{\mu} G \xrightarrow{\phi}  \pi_1(S^1) = \IZ$.  
  Let $\overline{T_f} \to T_f$ be the $G$-covering associated to  $\mu  \colon \pi_1(T_f) \to G$.
  Then $\overline{T_f}$ is $L^2$-acyclic by~\cite[Theorem~2.1]{Lueck(1994b)} and we can consider
  \begin{eqnarray}
  & \rho^{(2)}_u(\overline{T_f};\caln(G)) \in K_1^w(\IZ G), &
    \label{rho(2)_u(overline(mapping-torus),caln(G))}
  \end{eqnarray}
  and especially
  \begin{eqnarray}
  & \rho^{(2)}_u(\widetilde{T_f}) \in K_1^w(\IZ [\pi_1(T_f)]. &
   \label{rho(2)_u(widetilde(mapping-torus))}
 \end{eqnarray}
 
  In particular the latter is an interesting invariant of $f$.
  The following makes it possible to reduce the complexity of calculating the invariant for mapping tori.

  \begin{lemma} \label{lem:mapping_tori}

  \begin{enumerate}[font=\normalfont]

  \item  \label{lem:mapping_tori:additivity}
  Consider a pushout of  finite connected $CW$-complexes
    \[
    \xymatrix@R0.6cm{ X_0 \ar[r]^{l_1} \ar[rd]^{j_0} \ar[d]^{l_2} & X_1 \ar[d]^{j_1}
      \\
      X_2 \ar[r]^{j_2} & X }
    \]
    where the upper horizontal arrow is cellular, the left vertical arrow is an inclusion
    of $CW$-complexes and $X$ has the obvious $CW$-structure coming from the ones
    on $X_0$, $X_1$ and $X_2$.  Suppose that the homomorphism 
     $\pi_1(j_i) \colon \pi_1(X_i) \to \pi_1(X)$ is injective for $i = 0,1,2$. Consider
     self-homotopy equivalences  $f_i \colon X_i \to X_i$ satisfying $f_i \circ l_i = l_i \circ f_0$ for $i = 1,2$.
     Let $f \colon X \to X$ be the self homotopy equivalence determined by the pushout property.
 
     Then we obtain a pushout of connected finite $CW$-complexes
     \[
    \xymatrix@R0.6cm{T_{f_0} \ar[r] \ar[rd]^{k_0} \ar[d] & T_{f_1} \ar[d]^{k_1}
      \\
      T_{f_2} \ar[r]^{k_2} & T_f}
    \]
     such that $\pi_1(k_i)$ is injective for $i = 0,1,2$, and we get
     \[
      \rho^{(2)}_u(\widetilde{T_f}) 
       = 
      (k_1)_* \bigl(\rho^{(2)}_u(\widetilde{T_{f_1}})\bigr) 
      + (k_2)_* \bigl(\rho^{(2)}_u(\widetilde{T_{f_2}})\bigr) - (k_0)_* \bigl(\rho^{(2)}_u(\widetilde{T_{f_0}})\bigr);
      \]
   
      \item  \label{lem:mapping_tori:l2-acyclic_fiber} 

   If $f \colon X \to X$ is a self-homotopy equivalence of a connected finite $CW$-complex $X$ such that
      $X$ is $L^2$-acyclic, then
      \[
      \rho^{(2)}_u(\widetilde{T_f}) 
       =  
      0.
      \]
    \end{enumerate}
  \end{lemma}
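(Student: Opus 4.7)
For part~(\ref{lem:mapping_tori:additivity}), the plan is to exploit functoriality of the mapping-torus construction: applying $T_{(-)}$ to the given square yields a pushout of finite connected $CW$-complexes with cellular top arrow and $CW$-inclusion on the left, so $T_f$ inherits the evident $CW$-structure.  Each of the $T_{f_i}$ and $T_f$ is $L^2$-acyclic by~\cite[Theorem~2.1]{Lueck(1994b)}. It remains to verify injectivity of each $\pi_1(k_i)$: since the $f_i$ are self-homotopy equivalences, one has semi-direct product decompositions $\pi_1(T_{f_i}) \cong \pi_1(X_i) \rtimes_{(f_i)_*} \IZ$ and $\pi_1(T_f) \cong \pi_1(X) \rtimes_{f_*} \IZ$ compatible with $\pi_1(k_i)$, so $\pi_1(k_i)$ becomes the obvious map induced by the injective $\pi_1(j_i)$ together with the identity on $\IZ$, hence injective. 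The formula then follows immediately from the sum formula Theorem~\ref{the:Main_properties_of_the_universal_L2-torsion_for_universal_coverings}(2) applied to the mapping-torus square.

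For part~(\ref{lem:mapping_tori:l2-acyclic_fiber}), set $\pi := \pi_1(X)$ and $G := \pi_1(T_f)$, and let $j \colon \pi \to G$ denote the injection induced by the fiber inclusion $X_0 \subseteq T_f$. The plan is to filter the cellular $\IZ G$-chain complex of $\widetilde{T_f}$ by the subcomplex coming from $\widetilde{X_0}$, where $\widetilde{X_0} \cong G \times_\pi \widetilde{X}$ has cellular chain complex $\IZ G \otimes_{\IZ \pi} C_*(\widetilde{X})$ as a based free $\IZ G$-chain complex. Using the standard cellular decomposition of $T_f$ (one $n$-cell from each $n$-cell of $X$ in the fiber, and one $(n+1)$-cell $e \times I$ from each $n$-cell $e$ of $X$), this produces a short based exact sequence of finite based free $\IZ G$-chain complexes
\[
0 \to \IZ G \otimes_{\IZ \pi} C_*(\widetilde{X}) \to C_*(\widetilde{T_f}) \to \Sigma\bigl(\IZ G \otimes_{\IZ \pi} C_*(\widetilde{X})\bigr) \to 0,
\]
where the quotient is identified with the suspended induced complex via the cylinder cells. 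Since $X$ is $L^2$-acyclic and induction along the injection $j$ preserves $L^2$-acyclicity by~\cite[Theorem~1.35]{Lueck(2002)}, both flanking complexes are $L^2$-acyclic, so $C_*(\widetilde{T_f})$ is $L^2$-acyclic by Lemma~\ref{lem:Additivity_of_universal_L2-torsion}; in particular $T_f$ is $L^2$-acyclic. Combining additivity, the induction property (the chain-level version of Theorem~\ref{the:Main_properties_of_the_universal_L2-torsion}(3)) and the suspension identity $\rho^{(2)}_u(\Sigma D_*) = -\rho^{(2)}_u(D_*)$ recorded in the proof of Lemma~\ref{lem:universal_L2-torsion_and_chain_homotopy_equivalences}, we conclude
\[
\rho^{(2)}_u(\widetilde{T_f}) = j_*\bigl(\rho^{(2)}_u(\widetilde{X})\bigr) - j_*\bigl(\rho^{(2)}_u(\widetilde{X})\bigr) = 0.
\]

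The main obstacle, as is typical for mapping-torus arguments, is bookkeeping. In part~(\ref{lem:mapping_tori:additivity}) the hurdle is verifying carefully that the semi-direct product structures assemble to make each $\pi_1(k_i)$ injective, which can be formalized via Bass--Serre theory applied to the HNN decompositions of $\pi_1(T_{f_i})$ and $\pi_1(T_f)$. In part~(\ref{lem:mapping_tori:l2-acyclic_fiber}) the delicate point is checking that the quotient of the filtration is truly $\Sigma(\IZ G \otimes_{\IZ \pi} C_*(\widetilde{X}))$ as a \emph{based} chain complex; any sign ambiguities arising from the cellular boundary conventions are absorbed in the passage to $\Wh^w(G)$, where classes of the form $[\pm g]$ vanish.
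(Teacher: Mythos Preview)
Your proof of part~(\ref{lem:mapping_tori:additivity}) is essentially the paper's: both reduce to the Sum formula of Theorem~\ref{the:Main_properties_of_the_universal_L2-torsion_for_universal_coverings}~(2), and you simply supply the details (functoriality of $T_{(-)}$, injectivity of $\pi_1(k_i)$ via the semidirect-product description) that the paper leaves implicit.

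For part~(\ref{lem:mapping_tori:l2-acyclic_fiber}) you take a genuinely different route. The paper applies the Sum formula of Theorem~\ref{the:Main_properties_of_the_universal_L2-torsion_for_universal_coverings}~(2) to the pushout
\[
\xymatrix@R0.5cm{X \times \{0,1\} \ar[r]^-{\id_X \amalg f} \ar[d] & X \ar[d] \\ X \times [0,1] \ar[r] & T_f}
\]
and cancels the four resulting copies of $j_*\rho^{(2)}_u(\widetilde{X})$ (implicitly using that the two inclusions of the fibre differ by an inner automorphism of $\pi_1(T_f)$, hence induce the same map on $\Wh^w$). You instead filter $C_*(\widetilde{T_f})$ by the fibre subcomplex and identify the quotient with a suspension, invoking only the chain-level additivity of Lemma~\ref{lem:Additivity_of_universal_L2-torsion} and the induction property. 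Your argument is correct; the one point you should make explicit is that the standard cell structure on $T_f$ requires $f$ to be cellular, so one first replaces $f$ by a cellular approximation (which does not change the simple homotopy type of $T_f$, cf.~\cite[(22.1)]{Cohen(1973)}). The paper's approach is cleaner in that it stays at the space level and reuses an already-packaged theorem; yours is more hands-on and shows directly why the contributions from fibre and cylinder cells cancel, without passing through the Sum-formula machinery.
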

  \begin{proof}~\eqref{lem:mapping_tori:additivity}
  This follows from
     Theorem~\ref{the:Main_properties_of_the_universal_L2-torsion_for_universal_coverings}~(2).
    \\[1mm]~\eqref{lem:mapping_tori:l2-acyclic_fiber} 
   This follows from Theorem~\ref{the:Main_properties_of_the_universal_L2-torsion_for_universal_coverings}~(2) applied to the
      pushout
      \[
      \xymatrix@R0.5cm{X \times \{0,1\} \ar[r]^-{\id_X \amalg f} \ar[d]
      &
      X \ar[d]
      \\
       X \times [0,1] \ar[r]
       &
       T_f.}
       \]
     \end{proof}


\subsection{$L^2$-torsion and the $L^2$-Alexander torsion}
\label{subsec:L2-torsion}
 \label{subsec:L2-torsion_twisted_with_finite-dimensional_representations}

Throughout this section let $M$ be an admissible $3$-manifold. We write $\pi=\pi_1(M)$.
Taking the Fuglede-Kadison determinant yields a homomorphism
\[
{\det}_{\caln(\pi)} \colon \Wh^w(\pi) \to \IR.
\]
The image of $\rho^{(2)}_u(\widetilde{M})$ under this homomorphism is easily seen to be the $L^2$-torsion
$\rho^{(2)}(\widetilde{M})$ which can be computed as $-1/6\pi$-times the sum of the
volumes of the hyperbolic pieces in the Jaco-Shalen-Johannson decomposition of $M$, 
see~\cite[Theorem~0.7]{Lueck-Schick(1999)}.

We can generalize this discussion.
  Let $\pr \colon \pi \to H_1(\pi)_f := H_1(\pi)/\tors(H_1(\pi))$ be the  projection.
  Denote by $\Rep_{\IC}(H_1(\pi)_f)$ the representation ring of finite-dimensional complex
  $H_1(\pi)_f$-representations.   There is a pairing
  \[
  \Wh^w(\pi) \otimes \Rep_{\IC}(H_1(\pi)_f) \to \IR
  \]
  given by the Fuglede-Kadison determinant twisted with $\pr^* V$ for some
  finite-dimensional $H_1(\pi)_f$-representation $V$.  It sends
  $\bigl(\rho^{(2)}_u(\widetilde{M}),[V]\bigr)$ to the $\pr^* V$-twisted $L^2$-torsion
    $\rho^{(2)}(\widetilde{M};\pr^*V)$, see~\cite{Lueck(2015twisting)}.

  Given $\phi \in H^1(M;\IZ) = \hom_{\IZ}(H_1(\pi)_f,\IZ)$, we get for every 
   $t    \in (0,\infty)$ an element in $\Rep_{\IC}(H_1(\pi)_f)$ by the $1$-dimensional
    representation $\IC_{t,\phi}$, which is given by the $H_1(\pi)_f$-action on $\IC$ determined by
   $g \cdot \lambda :=  t^{\phi(g)} \cdot \lambda$ for $g \in H_1(\pi)_f$ and $\lambda \in \IC$. 
   Thus we  obtain an $L^2$-torsion function 
   \begin{equation*}
   \rho^{(2)}(\widetilde{M}) \colon (0,\infty) \to \IR, 
   \quad t \mapsto \rho^{(2)}(\widetilde{M};\pr^*\IC_{t,\phi}).
   \end{equation*}
A standard argument shows that this function is a well-defined invariant of the pair $(M,\phi)$ up to the addition of a function of the form $t\mapsto k\ln(t)$ for some $k\in \IZ$.  The $L^2$-torsion function $ \rho^{(2)}(\widetilde{M})$
    which is determined by $\rho^{(2)}_u(\widetilde{M}) \in \Wh^w(\pi)
    $ and whose value at
    $t = 1$ is the $L^2$-torsion $\rho^{(2)}(\widetilde{M})$. This $L^2$-torsion function is studied
    for instance in~\cite{Dubois-Friedl-Lueck(2016Alexander), Friedl-Lueck(2015l2+Thurston),Herrmann(2016), 
    Li-Zhang(2006volume),Li-Zhang(2006Alexander),Liu(2015)}.
    Moreover, $\limsup_{t \to \infty} \frac{\rho^{(2)}(\widetilde{M})(t)}{\ln(t)}$ and 
    $\liminf_{t \to 0} \frac{\rho^{(2)}(\widetilde{M})(t)}{\ln(t)}$
    exist as real numbers and their difference is called the degree of the $L^2$-torsion function.
    The negative of the degree turns out to be the Thurston seminorm $x_M(\phi)$ of $\phi$, 
    see~\cite{Friedl-Lueck(2015l2+Thurston), Liu(2015)}.


\typeout{---------   Section 3: The Grothendieck group of integral  polytopes and $K_1^w(\IZ G)$  --------}

\section{The Grothendieck group of integral polytopes and $K_1^w(\IZ G)$}
\label{subsec:The_Grothendieck_group_of_integral_polytopes_and_K_1w(ZG)}

In this section we want to detect elements in $\Wh^w(G)$, in particular
$\rho^{(2)}_u(X;\caln(G))$, in terms of integral polytopes in $\IR \otimes_{\IZ} H_1(G)_f$
and establish for admissible $3$-manifolds different from $S^1 \times D^2$ a relation to the Thurston seminorm.


\subsection{The Grothendieck group of integral polytopes}
\label{subsec:The_Grothendieck_group_of_integral_polytope}

Next we recall the definition of and give some information about the polytope group from
Friedl-L\"uck~\cite[Section~6.2] {Friedl-Lueck(2016l2+poly)}.

A \emph{polytope} in a finite-dimensional real vector space $V$ is a subset which is the
convex hull of a finite subset of $V$.  An element $p$ in a polytope is called
\emph{extreme} if the implication $p= \frac{q_1}{2} + \frac{q_2}{2} \Longrightarrow q_1 = q_2 = p$ 
holds for all elements $q_1$ and $q_2$ in the polytope.  Denote by $\Ext(P)$ the
set of extreme points of $P$.  If $P$ is the convex hull of the finite set $S$, then
$\Ext(P) \subseteq S$ and $P$ is the convex hull of $\Ext(P)$.  The \emph{Minkowski sum} of two
polytopes $P_1$ and $P_2$ is defined to be the polytope
\[
P_1 + P_2 := \{p_1 +p_2 \mid p_1 \in P_1, p \in P_2\}.
\] 
It is the convex hull of the set $\{p_1 + p_2 \mid p_1 \in \Ext(P_1), p_2 \in \Ext(P_2)\}$.

Let $H$ be a finitely generated free abelian group. We obtain a finite-dimensional
real vector space $\IR \otimes_{\IZ} H$.  An integral polytope in $\IR \otimes_{\IZ} H$ is
a polytope such that $\Ext(P)$ is contained in $H$, where we consider $H$ as a lattice in
$\IR \otimes_{\IZ} H$ by the standard embedding 
$H \to \IR \otimes_{\IZ} H, \; h \mapsto 1 \otimes h$.  
The Minkowski sum of two integral polytopes is again an integral
polytope. Hence the integral polytopes form an abelian monoid under the Minkowski sum with
the integral polytope $\{0\}$ as neutral element. 

\begin{definition}[Grothendieck group of integral polytopes]
\label{def:The_Grothendieck_group_of_integral_polytope}
Let $\calp_{\IZ}(H)$ be the abelian group given by the Grothendieck construction applied
to the abelian monoid of integral polytopes in $\IR \otimes_{\IZ} H$ under the Minkowski sum.
\end{definition}

Notice that for polytopes $P_0$, $P_1$ and $Q$ in a finite-dimensional real vector space
we have the implication $P_0 + Q = P_1 + Q \Longrightarrow P_0 = P_1$, see~\cite[Lemma~2]{Radstroem(1952)}.
Hence elements in $\calp_{\IZ}(H)$ are given by formal
differences $[P] - [Q]$ for integral polytopes $P$ and $Q$ in $\IR \otimes_{\IZ} H$ and we
have $[P_0] - [Q_0] = [P_1] - [Q_1] \Longleftrightarrow P_0 + Q_1 = P_1 + Q_0$.

There is an obvious homomorphism of abelian groups $i \colon H \to \calp_{\IZ}(H)$
which sends $h \in H$ to the class of the polytope $\{h\}$. Denote its cokernel by
\begin{eqnarray}
\calp_{\IZ}^{\Wh}(H) 
& = & 
\coker\bigl(i \colon H\to \calp_{\IZ}(H)\bigr).
\label{calp_IZ,Wh}
\end{eqnarray}
Put differently, in $\calp_{\IZ}^{\Wh}(H)$ two polytopes are identified if they are obtained by
translation with some element in the lattice $H$ from one another.

\begin{example}\label{ex:polytopes-in-r}
An  integral polytope in $\IR \otimes_{\IZ} \IZ$ is given by an interval $[m,n]$ for
integers $m,n$ with $m \le n$. The Minkowski sum becomes
$[m_1,n_1] + [m_2,n_2] = [m_1 + m_2, n_1 +n_2]$. One easily checks that one obtains isomorphisms of
abelian groups
\begin{eqnarray}
\quad \quad \quad \calp_{\IZ}(\IZ) & \xrightarrow{\cong} & \IZ^2
\quad 
[[m,n]]  \mapsto (n-m,m); 
\label{calp_Z(Z)_is_Z2}
\\
\quad \quad \quad \calp_{\IZ}^{\Wh}(\IZ) & \xrightarrow{\cong} & \IZ, \quad [[m,n]] \mapsto n-m. 
\label{calp_Z,Wh(Z)_is_Z}
\end{eqnarray}
\end{example}

Given a  homomorphism of finitely generated abelian groups $f \colon H \to H'$,
we can assign to an integral polytope $P \subseteq \IR \otimes_{\IZ} H$ an integral polytope
in $\IR \otimes_{\IZ} H'$ by the image of $P$ under 
$\id_{\IR} \otimes_{\IZ} f \colon  \IR \otimes_{\IZ} H \to  \IR \otimes_{\IZ} H'$
and thus we obtain  homomorphisms of abelian groups
\begin{eqnarray}
\calp_{\IZ}(f) \colon \calp_{\IZ}(H) & \to & \calp_{\IZ}(H'),
\quad [P] \mapsto [\id_{\IR} \otimes_{\IZ} f(P)];
\label{calp_Z(f)}
\\
\calp_{\IZ}^{\Wh}(f) \colon \calp_{\IZ}^{\Wh}(H) & \to & \calp_{\IZ}^{\Wh}(H'). 
\label{calp_Z,Wh(f)}
\end{eqnarray}

\begin{lemma} \label{lem:structure_of_polytope_group}
Let $H$ be a finitely generated free abelian group. Then:

\begin{enumerate}[font=\normalfont]

\item \label{lem:structure_of_polytope_group:injection}
The homomorphism
\[
\xi \colon \calp_{\IZ}(H) \to\prod_{\phi \in \hom_{\IZ}(H,\IZ)} \calp_{\IZ}(\IZ), \quad [P] - [Q] 
\mapsto \bigl(\calp_{\IZ}(\phi)([P]- [Q])\bigr)_{\phi}
\]
is injective;

\item \label{lem:structure_of_polytope_group:splitting}
The canonical short sequence of abelian groups 
\[0\, \to \,H \,\xrightarrow{i} \,\calp_{\IZ}(H) \,\xrightarrow{\pr} \, \calp_{\IZ}^{\Wh}(H)\, \to\, 0\] is split exact;

\item \label{lem:structure_of_polytope_group:divisibility}

The abelian groups $\calp_{\IZ}(H)$ and $\calp_{\IZ}^{\Wh}(H)$ are free.

\end{enumerate}
\end{lemma}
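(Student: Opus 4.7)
The plan is to prove the three assertions in order. Parts (1) and (2) are relatively routine and rely respectively on the support-function characterization of polytopes and a standard dual-basis splitting argument. Part (3), the freeness assertion, will be the main obstacle.

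For (1), the key fact is that a compact convex set in a real vector space is uniquely determined by its support function $\phi\mapsto \max_{p\in P}\phi(p)$. Since $\hom_{\IZ}(H,\IZ)$ spans $\hom_{\IR}(\IR\otimes_{\IZ}H,\IR)$ over $\IR$, already the values of $\min\phi(P)$ and $\max\phi(P)$ for $\phi\in\hom_{\IZ}(H,\IZ)$ determine the integral polytope $P$. Under the identification $\calp_{\IZ}(\IZ)\cong\IZ^2$ from Example~\ref{ex:polytopes-in-r}, the $\phi$-component of $\xi([P]-[Q])$ records the pair
\[
\bigl((\max\phi(P)-\min\phi(P))-(\max\phi(Q)-\min\phi(Q)),\ \min\phi(P)-\min\phi(Q)\bigr).
\]
Vanishing of $\xi([P]-[Q])$ thus forces the min and max support values of $P$ and $Q$ to coincide for every $\phi$, hence $P=Q$, and by R\aa dstr\"om's cancellation this gives $[P]-[Q]=0$ in $\calp_{\IZ}(H)$.

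For (2), I would choose a $\IZ$-basis $\phi_1,\dots,\phi_n$ of $\hom_{\IZ}(H,\IZ)$ with dual basis $h_1,\dots,h_n$ of $H$. The assignment $P\mapsto \min_{p\in P}\phi_j(p)$ is additive under Minkowski sum, since $\min_{r\in P+Q}\phi_j(r)=\min_{p\in P}\phi_j(p)+\min_{q\in Q}\phi_j(q)$, so it extends to a homomorphism $s_j\colon\calp_{\IZ}(H)\to\IZ$. Defining $r\colon\calp_{\IZ}(H)\to H$ by $r(x)=\sum_{j=1}^{n}s_j(x)\cdot h_j$ yields $r\bigl(i(h)\bigr)=\sum_{j}\phi_j(h)\cdot h_j=h$, so $r$ is a retraction splitting the sequence. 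Surjectivity of $\pr$ and exactness at $\calp_{\IZ}(H)$ are tautological from the definition of $\calp_{\IZ}^{\Wh}(H)$ as the cokernel of $i$, and injectivity of $i$ follows either from (1) or directly from the existence of the retraction $r$.

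For (3), the splitting established in (2) reduces the question to showing $\calp_{\IZ}^{\Wh}(H)$ is free. I first observe it is torsion-free: if $n([P]-[Q])\in\ker(\pr)$ for some $n\geq 1$, then $n[P]=n[Q]+i(h)$ for some $h\in H$, whence $nP=nQ+h$ as integral polytopes by cancellation. Cancellation for compact convex sets in $\IR\otimes_{\IZ}H$ (for instance via support functions) then gives $P=Q+h/n$, and evaluating the translation at any extreme lattice point of $Q$ shows $h/n\in H$, so $[P]-[Q]$ already lay in $\ker(\pr)$. Passing from torsion-freeness to freeness is the main obstacle. A natural approach is Pontryagin's criterion for countable torsion-free abelian groups: since $\calp_{\IZ}^{\Wh}(H)$ is countable, it suffices to show every finitely generated subgroup is contained in a finitely generated pure subgroup. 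Given polytopes $P_1,\dots,P_k$ with common normal-fan refinement $\Sigma$, one would study the subgroup of $\calp_{\IZ}^{\Wh}(H)$ generated by classes of polytopes whose normal fan is refined by $\Sigma$ and verify that this subgroup is finitely generated and pure; controlling exactly which formal differences become translation classes of integral polytopes is where the real work lies.
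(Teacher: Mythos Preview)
Your arguments for (1) and (2) are correct and essentially parallel the paper's, with minor cosmetic differences: for (1) the paper invokes the Separating Hyperplane Theorem directly rather than the support-function formulation, and for (2) the paper sends a polytope to its lexicographically minimal vertex rather than to the tuple of coordinatewise minima; both choices give a valid retraction.

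The real divergence is in (3), where you recognise the difficulty but do not complete the argument, and you overlook the short route the paper takes. The paper simply combines (1) with Example~\ref{ex:polytopes-in-r}: since $\calp_{\IZ}(\IZ)\cong\IZ^2$, assertion (1) embeds $\calp_{\IZ}(H)$ into $\prod_{\phi}\IZ^2$, a countable direct product of copies of $\IZ$. Because there are only countably many integral polytopes in $\IR\otimes_{\IZ}H$, the group $\calp_{\IZ}(H)$ is countable, and Specker's theorem~\cite{Specker(1950)} says every countable subgroup of $\IZ^{\IN}$ is free abelian. Hence $\calp_{\IZ}(H)$ is free; then (2) exhibits $\calp_{\IZ}^{\Wh}(H)$ as a direct summand of a free abelian group, so it too is free (subgroups of free abelian groups are free). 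No Pontryagin criterion or normal-fan bookkeeping is needed.

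Your torsion-freeness argument for $\calp_{\IZ}^{\Wh}(H)$ is fine and independently useful, but the proposed route through normal fans and purity is substantially harder than necessary and, as you acknowledge, not carried to completion. The missing idea is precisely to exploit the injection you already established in (1) together with the countability of $\calp_{\IZ}(H)$.
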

\begin{proof}~\eqref{lem:structure_of_polytope_group:injection} Consider $[P]- [Q]$ in
  $\calp_{\IZ}(H)$. Suppose that $[P] - [Q]$ is not the trivial element. Then the polytopes
  $P$ and $Q$ are different. Hence we can assume without loss of generality that there
  exists $q \in Q$ with $q \notin P$ (if not,  consider $[Q] - [P]$). By the Separating
  Hyperplane Theorem, see~\cite[Theorem~V.4 on page~130]{Reed-Simon(1980)}, there exists
  an $\IR$-linear map $\psi \colon \IR \otimes_{\IZ} H \to \IR$ and $r \in \IR$ such that
  $\psi(x) < r$ holds for all $x \in P $ and $\psi(q) > r$.  By continuity we can find a
  $\IZ$-linear map $\phi' \colon H \to \IQ$ such that the same holds for the $\IR$-linear
  map $\IR \otimes H \to \IR$ induced by $\phi$. Choose a $\IZ$-map $\phi \colon H \to  \IZ$ 
  such that for some natural number $n$ we have $n \cdot \phi' = \phi$. Then we have
  $\phi(P) < n \cdot r$ and $\phi(q) > n \cdot r$. This implies $\phi(P) \not=
  \phi(Q)$. Hence $\calp_{\IZ}(\phi)([P]- [Q]) \not=0$ and therefore $\xi([P] - [Q]) \not=   0$.  
  \\[1mm]~\eqref{lem:structure_of_polytope_group:splitting}
   We pick an identification of  $H$ with $\IZ^n$. We endow $\IZ^n$ with the lexicographical order.
  It is straightforward to verify that there exists a unique homomorphism
  \[ \calp_{\IZ}(H)=\calp_{\IZ}(\IZ^n)\to H=\IZ^n\]
  with the property that a polytope gets sent to the extreme point of $P$ of lowest order. 
  This is clearly a splitting of the map $\IZ^n=H\,\to\,  \calp_{\IZ}(H)=\calp_{\IZ}(\IZ^n)$.
  \\[1mm]~\eqref{lem:structure_of_polytope_group:divisibility} 
It follows from Example~\ref{ex:polytopes-in-r} and assertion~\eqref{lem:structure_of_polytope_group:injection} 
that $\calp_{\IZ}(H)$ embeds into a countable free abelian group, hence it is free abelian by \cite{Specker(1950)}. 
It follows from assertion~\eqref{lem:structure_of_polytope_group:splitting} that  
$\calp_{\IZ}^{\Wh}(H)$ is also free-abelian. Explicit bases of the free abelian groups are given by
 Funke~\cite{Funke(2016)}. 
\end{proof}


\subsection{The polytope homomorphism}
\label{subsec:The_polytope_homomorphism}
In the following we will always mostly work with groups that satisfy the Atiyah Conjecture. For the reader's convenience we recall the statement.

\begin{definition}[Atiyah Conjecture]
\label{def:Atiyah_Conjecture}
We say that a torsion-free group $G$ satisfies the \emph{Atiyah Conjecture} if for any
matrix $A \in M_{m,n}(\IQ G)$ the von Neumann dimension $\dim_{\caln(G)}(\ker(r_A))$
of the kernel of the $\caln(G)$-homomorphism
$r_A\colon \caln(G)^m \to \caln(G)^n$ given by right multiplication with $A$ is an integer.
\end{definition}

The precise statement of the Atiyah Conjecture is not relevant to us, what is important is that we have the following proposition which is  \cite[Lemma~10.39]{Lueck(2002)}.

\begin{proposition}\label{prop:group-ring-atiyah-conjecture}
Let $G$ be a group that is torsion-free and that  satisfies the Atiyah Conjecture. Then the rational closure $\calr(G)$ agrees with the division closure $\cald(G)$ of $\IZ G
\subseteq \calu(G)$ and $\cald(G)$ is a skew-field. 
\end{proposition}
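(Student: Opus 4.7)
The plan is to first prove that $\cald(G)$ is a skew-field, from which the equality $\cald(G) = \calr(G)$ will follow by a standard linear-algebra argument. The inclusion $\cald(G) \subseteq \calr(G)$ is immediate from the definitions.

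To show $\cald(G)$ is a skew-field, I would take $x \in \cald(G) \setminus \{0\}$ and argue that $x$ is invertible in $\calu(G)$; since $\cald(G)$ is division-closed in $\calu(G)$, the inverse would then lie in $\cald(G)$. Using the description of $\calu(G)$ as the Ore localization of $\caln(G)$ at its non-zero-divisors recalled in Subsection~\ref{subsec:Review_of_division_closure}, invertibility of $x$ in $\calu(G)$ is equivalent to $r_x \colon \caln(G) \to \caln(G)$ being injective, or equivalently to $\dim_{\caln(G)}(\ker r_x) = 0$. For $x \in \IQ G \setminus \{0\}$ this follows directly from the Atiyah Conjecture applied to the $1 \times 1$ matrix $(x)$: the kernel dimension is a non-negative integer at most $1$, and it cannot equal $1$, else $r_x = 0$ and hence $x = 0$ by injectivity of $\IQ G \hookrightarrow \caln(G)$.

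The main obstacle is propagating the vanishing $\dim_{\caln(G)}(\ker r_x) = 0$ from elements of $\IQ G$ to arbitrary elements of $\cald(G)$. I would attack this using the standard Cramer/Malcolmson representation from the theory of division closures: every $x \in \cald(G)$ admits an expression $x = u^{T} A^{-1} v$ with $A$ a square matrix over $\IQ G$ that is invertible over $\cald(G)$ and $u, v$ vectors over $\IQ G$. A Schur-complement computation then relates $\dim_{\caln(G)}(\ker r_x)$ to the kernel dimension of the enlarged block matrix $\tilde A = \bigl(\begin{smallmatrix} A & v \\ u^{T} & 0 \end{smallmatrix}\bigr)$ over $\IQ G$, which is an integer by the Atiyah Conjecture. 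Bookkeeping on the two kernel dimensions together with $x \ne 0$ forces $\dim_{\caln(G)}(\ker r_x) = 0$, yielding the desired invertibility.

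Once $\cald(G)$ is known to be a skew-field, I would establish $\calr(G) \subseteq \cald(G)$ by showing that $\cald(G)$ is rationally closed. Given $A \in M_{n,n}(\cald(G))$ that is invertible in $M_{n,n}(\calu(G))$, the equation $A \cdot A^{-1} = I$ in $M_{n,n}(\calu(G))$ forces the rows of $A$ to be $\calu(G)$-linearly independent, hence also $\cald(G)$-linearly independent since $\cald(G) \subseteq \calu(G)$. Because $\cald(G)$ is a skew-field, Gaussian elimination (valid over any skew-field) then produces an inverse of $A$ with entries in $\cald(G)$, which must coincide with the one in $\calu(G)$. Hence $\cald(G)$ is rationally closed, which gives $\calr(G) \subseteq \cald(G)$ and completes the proof.
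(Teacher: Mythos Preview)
The paper does not prove this proposition; it simply records it as \cite[Lemma~10.39]{Lueck(2002)}. Your sketch is essentially the argument that appears there (going back to Linnell), so you have reconstructed the intended proof rather than found an alternative.

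Two technical points need tightening. First, the Cramer--Malcolmson representation $x = u^{T} A^{-1} v$ is a feature of the \emph{rational} closure, not of the division closure per se; what makes it available to you is the inclusion $\cald(G) \subseteq \calr(G)$. Accordingly the matrix $A$ one obtains is a priori only invertible over $\calu(G)$, not over $\cald(G)$ as you wrote --- but invertibility over $\calu(G)$ is precisely what the Schur-complement reduction uses, so the argument survives. Second, for $x \in \cald(G)$ not lying in $\caln(G)$ the expression ``$\dim_{\caln(G)}(\ker r_x)$'' does not literally make sense, since right multiplication by $x$ is not a $\caln(G)$-endomorphism of $\caln(G)$. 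One must pass to the extended dimension function $\dim_{\calu(G)}$ on $\calu(G)$-modules developed in \cite[Chapter~8]{Lueck(2002)}; for it both the Atiyah integrality (via the identification with $\dim_{\caln(G)}$ after base change) and the Schur identity hold. The computation then gives $\dim_{\calu(G)}(\ker r_x) = \dim_{\calu(G)}(\ker r_{\tilde A}) \in \{0,1\}$, the value $1$ forces $x = 0$, and the value $0$ forces $x$ to be a unit because $\calu(G)$ is von Neumann regular with a faithful dimension. With these adjustments your outline is correct; the final step (any skew-field is rationally closed, via Gaussian elimination) is standard.
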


From now on we suppose that $G$ is torsion-free, satisfies the Atiyah Conjecture and $H_1(G)_f$ is finitely generated.
In Friedl-L\"uck~\cite[Section~6.2] {Friedl-Lueck(2016l2+poly)} the main ingredients of the so called \emph{polytope homomorphism}
\begin{eqnarray}
& \IP \colon K_1^w(\IZ G) \to \calp_{\IZ}(H_1(G)_f)&
\label{polytope_homomorphism}
\end{eqnarray}
have been established. We briefly recall its definition for the reader's convenience. 

There is a homomorphism, well-defined by Lemma~\ref{lem:L2-acyclic_and_ZG_subseteq_calr(G)-contractible},
\begin{eqnarray}
\theta \colon K_1^w(\IZ G) \to K_1(\calr(G)), \quad [f] \mapsto [\id_{\calr(G)}  \otimes_{\IZ G} f].
\label{theta_colon_K_1w(ZG)_to_cald(G)}
\end{eqnarray}

Then by Proposition~\ref{prop:group-ring-atiyah-conjecture} the rational closure $\calr(G)$ agrees with the division closure $\cald(G)$ of $\IZ G
\subseteq \calu(G)$ and $\cald(G)$ is a skew-field. There is a Dieudonn\'e determinant for
invertible matrices over a skew field which takes values in the abelianization of the
group of units, see~\cite[Corollary~4.3 in page~133]{Silvester(1981)}.  Hence we obtain an
isomorphism
\begin{eqnarray}
{\det}_D \colon  K_1(\cald(G))
& \xrightarrow{\cong}  &
\cald(G)^{\times}_{\abel} := \cald(G)^{\times}/[\cald(G)^{\times},\cald(G)^{\times}]. 
\label{K_1(cald(G))_Dieudonne}
\end{eqnarray}
The inverse 
\begin{eqnarray}
& J_{\cald(G)} \colon \cald(G)^{\times}_{\abel}   \xrightarrow{\cong}  K_1(\cald(G)) &
\label{K_1(K)_Dieudonne_inverse}
\end{eqnarray}
sends the class of a unit in $D$ to the class of the corresponding $(1,1)$-matrix.

Next we want to define a homomorphism
\begin{eqnarray}
\IP' \colon \cald(G)^{\times}_{\abel} \to \calp_{\IZ}(H_1(G)_f).
\label{P_prime_colon_cald_to_p_Z}
\end{eqnarray}
We denote by $K$ the kernel of $\pr\colon G\to H_1(G)_f$.  Choose a map of sets 
$s \colon H_1(G)_f \to G$ with $\pr \circ s = \id_{H_1(G)_f}$.  Then
there is an isomorphism
\[
\widehat{j_s} \colon T^{-1} (\cald(K) \ast_s H_1(G)_f) \xrightarrow{\cong} \cald(G),
\]
where $T^{-1} (\cald(K) \ast_s H_1(G)_f)$ is the Ore localization of the integral domain
given by the crossed product $\cald(K) \ast_s H_1(G)_f$ with respect to the
multiplicative set $T$ of non-trivial elements.  Consider an element 
$u \in \cald(K) \ast_s H_1(G)_f$ with $u \not= 0$.  We can write 
$u =\sum_{h \in H} u_h \cdot h \in \cald(K) \ast_s H_1(G)_f$ for appropriate elements $u_h \in \cald(K)$.  
Define an integral polytope $P(u) \subseteq \IR \otimes_{\IZ} H_1(G)_f$ by the convex hull of the 
subset $\{h \in H_1(G)_f \mid u_h \not = 0\}$ of $H_1(G)_f$. 
 (This may be viewed as a non-commutative version of the Newton polytope
of a polynomial in several variables.)
This construction is compatible with
the multiplication in $\cald(K) \ast_s H_1(G)_f$ and the Minkowski sum, namely, for 
$u,v \in \cald(K) \ast_s H_1(G)_f$ with $u,v \not= 0$ we get $P(uv) = P(u) + P(v)$. Thus we
obtain a homomorphism of abelian groups
\[
\left(T^{-1} (\cald(K) \ast_s H_1(G)_f)\right)^{\times}  \to \calp_{\IZ}(H_1(G)_f), 
\quad uv^{-1} \mapsto [P(u)] - [P(v)].
\]
If we compose it with the isomorphism coming from $\widehat{j_s}$ and take into account that
$\calp_{\IZ}(H_1(G)_f)$ is abelian, we get  the desired well-defined homomorphism $\IP'$ announced
in~\eqref{P_prime_colon_cald_to_p_Z}. 

The polytope homomorphism~\eqref{polytope_homomorphism} is defined to be the composite
\[
\IP \colon K_1^w(\IZ G) \xrightarrow{\theta} K_1(\cald(G)) 
\xrightarrow{{\det}_D} \cald(G)^{\times}_{\abel} 
\xrightarrow{\IP'} \calp_{\IZ}(H_1(G)_f)
\]
of the homomorphisms defined in~\eqref{theta_colon_K_1w(ZG)_to_cald(G)},~%
\eqref{K_1(cald(G))_Dieudonne}, and~\eqref{P_prime_colon_cald_to_p_Z}.

One easily checks that the polytope homomorphism~\eqref{polytope_homomorphism}
induces homomorphisms denoted by the same symbol $\IP$
\begin{eqnarray}
\IP \colon \widetilde{K}_1^w(\IZ G) & \to & \calp_{\IZ}(H_1(G)_f);
\label{polytope_homomorphism_reduced}
\\
\IP \colon \Wh^w(G) & \to & \calp_{\IZ}^{\Wh}(H_1(G)_f).
\label{polytope_homomorphism_Wh}
\end{eqnarray}

Sending the class of an integral polytope $P$ to the class of the integral polytope 
$-P = \{-p \mid p \in P\}$, yields involutions
\begin{eqnarray*}
 \ast \colon \calp_{\IZ}(H_1(G)_f) & \xrightarrow{\cong} &\calp_{\IZ}(H_1(G)_f),
\label{involution_on_calp_Z(H_1(G)_f)}
\\
\ast \colon \calp_{\IZ}^{\Wh}(H_1(G)_f) & \xrightarrow{\cong} &\calp_{\IZ}^{\Wh}(H_1(G)_f).
\label{involution_on_calp_Z,Wh(H_1(G)_f)}
\end{eqnarray*}
Consider any group homomorphism $w \colon G \to \{ \pm 1 \}$.  Equip $\IZ G$ with the
involution of rings sending $\sum_{g \in G} r_g \cdot g$ to 
$\sum_{g \in G} r_g \cdot w(g) \cdot g^{-1}$ and $K_1^w(\IZ G)$, $\widetilde{K}_1^w(\IZ G)$, and 
$\Wh^w(G)$ with the induced involutions. One easily checks

\begin{lemma} \label{lem:polytope_homomorphisms_and_involutions}
The  polytope homomorphisms introduced
in~\eqref{polytope_homomorphism}, \eqref{polytope_homomorphism_reduced},
and~\eqref{involution_on_calp_Z,Wh(H_1(G)_f)} are compatible with the involutions defined
above.
\end{lemma}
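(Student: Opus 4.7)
The plan is to verify compatibility at each stage of the factorization
\[
\IP \colon K_1^w(\IZ G) \xrightarrow{\theta} K_1(\cald(G)) \xrightarrow{{\det}_D} \cald(G)^{\times}_{\abel} \xrightarrow{\IP'} \calp_{\IZ}(H_1(G)_f).
\]
First I would extend the $w$-twisted involution from $\IZ G$ to $\cald(G)$. The map $\sigma \colon \IZ G \to \IZ G$ sending $\sum_g r_g g$ to $\sum_g r_g w(g) g$ is a ring automorphism that extends to pointwise multiplication by $w$ on $L^2(G)$, which is an isometry; composing it with the standard adjoint (which extends to $\calu(G)$) produces the $w$-twisted involution on $\calu(G)$. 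Since any anti-automorphism carries invertible elements to invertible elements, this involution preserves the division closure $\cald(G)$. Denote the resulting involutions on $\cald(G)$, $K_1(\cald(G))$, and $\cald(G)^{\times}_{\abel}$ all by $\ast$.

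Next, $\theta$ is $\ast$-equivariant by functoriality: for an endomorphism $f$ of $\IZ G^n$, the identity $\id_{\cald(G)} \otimes_{\IZ G} f^{\ast} = (\id_{\cald(G)} \otimes_{\IZ G} f)^{\ast}$ holds, where the right adjoint uses the involution on $\cald(G)$ just constructed. The Dieudonn\'e determinant is natural under ring involutions (it is defined via row-operation normal forms that are equivariant under $\ast$), giving $\det_D(\ast x) = \ast \det_D(x)$. The main step is equivariance of $\IP'$. Pick a set-theoretic section $s \colon H_1(G)_f \to G$ of $\pr$ with $K := \ker(\pr)$, and identify $\cald(G)$ with $T^{-1}(\cald(K) \ast_s H_1(G)_f)$ via $\widehat{j_s}$. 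The involution preserves $K$, hence restricts to an involution on $\cald(K)$. For a non-zero element $u = \sum_{h} u_h \cdot s(h)$ with $u_h \in \cald(K)$ one has
\[
\bar u = \sum_h \overline{u_h} \cdot w(s(h)) \cdot s(h)^{-1}.
\]
Since $\pr(s(h)^{-1}) = -h$, there is a unique $k_h \in K$ with $s(h)^{-1} = k_h \cdot s(-h)$; regrouping yields
\[
\bar u = \sum_{h'} v_{h'} \cdot s(h'), \qquad v_{-h} = \overline{u_h} \cdot w(s(h)) \cdot k_h \in \cald(K),
\]
and $v_{-h} \neq 0$ if and only if $u_h \neq 0$. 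Therefore the support of $\bar u$ in $H_1(G)_f$ is the image under negation of the support of $u$, so $P(\bar u) = -P(u)$ in $\calp_{\IZ}(H_1(G)_f)$. Passing to fractions $uv^{-1}$ and using that $\calp_{\IZ}(H_1(G)_f)$ is abelian gives $\IP'(\ast x) = \ast \IP'(x)$ for every $x \in \cald(G)^{\times}_{\abel}$.

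Combining the three compatibilities gives the statement for the polytope homomorphism~\eqref{polytope_homomorphism}, and the statements for~\eqref{polytope_homomorphism_reduced} and~\eqref{polytope_homomorphism_Wh} follow at once, since both involutions descend to the respective quotients. The main obstacle is the crossed-product bookkeeping in the final step: one needs to verify that the correction element $k_h$ indeed lies in $K$ (so that $v_{-h}$ sits in the $\cald(K)$-component of the decomposition) and that $\widehat{j_s}$ intertwines the two involutions; once these are in place, the support calculation giving $P(\bar u) = -P(u)$ is automatic and the rest is formal.
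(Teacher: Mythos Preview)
Your proposal is correct and is precisely the kind of verification the paper has in mind: in the paper the lemma is stated without proof, preceded only by the phrase ``One easily checks''. Your stage-by-stage check through the factorization $\theta$, ${\det}_D$, $\IP'$ is the natural way to carry this out, and the key observation---that applying the $w$-twisted involution to $u = \sum_h u_h \cdot s(h)$ produces an element whose $H_1(G)_f$-support is the negation of the support of $u$, hence $P(\bar u) = -P(u)$---is exactly the content that makes the lemma true.
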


If $f \colon G \to K$ is an injective group homomorphism, then the following diagram
\begin{eqnarray}
&
\xymatrix@!C=12em@R=0.6cm {\Wh^w(G) \ar[r]^-{\Wh^w(f)} \ar[d]_{\IP^G}
& \Wh^w(K) \ar[d]^{\IP^K}
\\
\calp_{\IZ}^{\Wh}(H_1(G)_f) \ar[r]_-{\calp_{\IZ}^{\Wh}(H_1(f)_f)} 
& \calp_{\IZ}^{\Wh}(H_1(K)_f)
}
&
\label{naturality_of_the_polytope-homomorphism}
\end{eqnarray}
commutes and  is compatible with the involutions.

\begin{remark}[Computational complexity] \label{rem:computational_complexity} The main
  problem when one wants to compute the image of an element in $\Wh^w(G)$ under the polytope
  homomorphism $\bfP^G  \colon \Wh^w(G) \to \calp_{\IZ}^{\Wh}(H_1(G)_f) $
  is that for an $(n,n)$-matrix over $\IZ G$ it is usually very hard to figure out the
  corresponding unit in $\cald(G)^{\times}$ since the Dieudonn\'e determinant is not at all
  easy to compute.  See also~\cite[Remark~6.24]{Lueck(2015twisting)}. The situation is
  easy if $n = 1$, as exploited in
  Subsection~\ref{subsec:Torsion-free_one-relator_groups_with_two_generators}.
\end{remark}


\subsection{The $L^2$-torsion polytope}
\label{subsec:The_L2-torsion_polytope}

\begin{definition}[The $L^2$-torsion polytope]
  \label{def:The_L2-torsion_polytope}
  Let $G$ be a torsion-free group satisfying the Atiyah Conjecture. Suppose that $H_1(G)_f$ is finitely generated.
  Let $X$ be a free finite $G$-$CW$-complex which is $L^2$-acyclic. Then we define its
  \emph{$L^2$-torsion polytope}
  \[
   P(X;G) \in \calp_{\IZ}^{\Wh}(H_1(G)_f)
   \]
    to be the image of the \emph{negative} \label{sign-convention} of  the universal $L^2$-torsion $\rho^{(2)}_u(X;\caln(G)) \in \Wh^w(G)$
    introduced in Definition~\ref{def:universal_L2-torsion_for_G-CW-complexes} under the
    polytope homomorphism introduced in~\eqref{polytope_homomorphism_Wh}, i.e,
    \[
    P(X;G) = \IP^G\bigl(-\rho^{(2)}_u(X;\caln(G))\bigr).
    \]
    We take a minus sign in the definition above in order to get nicer formulas when relating $P(X;G)$ to the 
    Thurston norm and the dual Thurston polytope, see 
    Theorem~\ref{the:The_L2-torsion_for_universal_coverings_and_the_Thurston_seminorm} 
    and Theorem~\ref{the:dual_Thurston_polytope_and_the_L2-torsion_polytope_new}.

    If $X$ is an $L^2$-acyclic connected finite $CW$-complex, we abbreviate
    \[
    P(\widetilde{X}) := P(\widetilde{X};\caln(\pi_1(X)) \in \calp_{\IZ}^{\Wh}(H_1(X)_f).
    \]
  \end{definition}
  
  Now there are obvious analogues of the
  Theorems~\ref{the:Main_properties_of_the_universal_L2-torsion}
  and~\ref{the:Main_properties_of_the_universal_L2-torsion_for_universal_coverings} which
  are sometimes simpler to state.  As an illustration we go through a few examples for
  $P(\widetilde{X})$ for  $L^2$-acyclic connected finite $CW$-complexes $X$ and $Y$.
  Recall that we assume that the fundamental group is torsion-free and satisfies
  the Atiyah Conjecture.

  \begin{enumerate}

  \item (Homotopy invariance)
    If $X$ and $Y$ are simple homotopy equivalent or if $X$ and $Y$ are homotopy
    equivalent and $\pi_1(X)$ satisfies the $K$-theoretic Farrell-Jones Conjecture, then the image of $P(\widetilde{X})$
   under the isomorphism $\IR \otimes_{\IZ} H_1(X)_f \xrightarrow{\cong} \IR \otimes_{\IZ} H_1(Y)_f$  induced by $f$
   is $P(\widetilde{Y})$;

   \item ($S^1$-actions)
   Let $X$ be a connected finite $S^1$-$CW$-complex.
Suppose that for one and hence all $x \in X$ the map $\pi_1(S^1,1) \xrightarrow{\pi_1(\ev_x,1)} \pi_1(X,x)$
is injective, where $\ev_x \colon S^1 \to X$ sends $z$ to $z \cdot x$. 
Define the $S^1$-orbifold Euler characteristic of $X$ by
\[
\chi^{S^1}_{\orb}(X) \,\,=\,\, \smsum{n \ge 0}{} (-1)^n \cdot \smsum{e \in I_n}{} \smfrac{1}{|S^1_e|},
\]
where $I_n$  is the set of open $n$-dimensional $S^1$-cells of $X$ and for such an $S^1$-cell
$e \in I_n $ we denote by $S^1_e$ the isotropy group of any point in $e$.

Then $\widetilde{X}$ is $L^2$-acyclic and we get
\[
P(\widetilde{X}) =  
\chi^{S^1}_{\orb}(X) \cdot \calp_{\IZ}^{\Wh}(\ev_x )([J]),
\]
where the integral polytope $J \subseteq \IR \otimes_{\IZ} \IZ$ is the polytope given by 
$[0,1] \subseteq \IR = \IR \otimes_{\IZ} \IZ$.

\item (Seifert and graph manifolds)
  An analogous formula holds for Seifert manifolds with infinite fundamental group, 
  compare Remark~\ref{rem:graph_manifolds}.

 \end{enumerate}


\subsection{The $L^2$-torsion and the Thurston seminorm}
\label{subsec:The_L2-torsion_and_the_Turston_norm}
Let $H$ be a finitely generated torsion-free abelian group. Let $P \subseteq \IR \otimes_{\IZ} H$
be a polytope. It defines a seminorm on $\hom_{\IZ}(H,\IR) = \hom_{\IR}(\IR \otimes_{\IZ} H,\IR)$ by
\begin{eqnarray}
 \|\phi \|_P 
&:= & \tmfrac{1}{2}
\sup\{\phi(p_0) - \phi(p_1) \mid p_0, p_1 \in P\}.
\label{seminorm_of_a_polytope}
\end{eqnarray}
It is compatible with the Minkowski sum, namely, for two integral polytopes $P,Q \subseteq \IR \otimes_{\IZ} H$
we have
\begin{eqnarray}
 \|\phi \|_{P+ Q} 
&= & 
 \|\phi \|_{P}  +  \|\phi \|_{Q}.
\label{seminorm_of_a_polytope_is_additive_in_polytope}
\end{eqnarray}
Put
\begin{multline}
\calsn(H) 
 := 
\{f \colon \hom_{\IZ}(H;\IR) \to \IR \mid \text{there exist integral polytopes}
\\
\; P \; \text{and} \; Q\; \text{in}\; \IR \otimes_{\IZ} H \;\text{with}\; f =  \|\; \|_P -  \|\; \|_Q\}.
\label{calsn(H)}
\end{multline}
This becomes an abelian group by $(f-g)(\phi) = f(\phi) - g(\phi)$ because 
of~\eqref{seminorm_of_a_polytope_is_additive_in_polytope}. Again because 
of~\eqref{seminorm_of_a_polytope_is_additive_in_polytope}
we obtain an epimorphism  of abelian groups
\begin{equation}
\sn \colon \calp_{\IZ}^{\Wh}(H) \to \calsn(H) 
\label{norm_homomorphism}
\end{equation}
by sending $[P] - [Q]$ for
two polytopes $P,Q \subseteq \IR \otimes_{\IZ} H$ to the function
\[
\hom_{\IZ}(H,\IR) \to \IR, \quad \phi \mapsto  \|\phi \|_P -  \|\phi \|_Q.
\]

\begin{theorem}[The $L^2$-torsion and the Thurston seminorm]
\label{the:The_L2-torsion_and_the_Thurston_seminorm}
Let $M $ be an admissible  $3$-manifold which is not homeomorphic to $S^1 \times D^2$
and is not a closed graph manifold. We write $\pi=\pi_1(M)$. 
Then there is a  virtually finitely generated free abelian group $\Gamma$,
and a factorization $\pr_M \colon \pi \xrightarrow{\alpha} \Gamma \xrightarrow{\beta} H_1(M)_f$
of the canonical projection $\pr_M \colon \pi \to H_1(\pi)_f$ into epimorphisms,  such that the following holds:

Consider a torsion-free  group $G$ which satisfies the Atiyah Conjecture and for which $H_1(G)_f$ is finitely generated, and
any factorization of $\alpha \colon \pi \to \Gamma$ into group homomorphisms
$\pi \xrightarrow{\mu} G \xrightarrow{\nu} \Gamma$. Let $\overline{M} \to M$ be the $G$-covering associated to $\mu$. 
 Let $\phi \colon H_1(G)_f \to \IZ$ be any group homomorphism.
Then $\overline{M}$ is $L^2$-acyclic
and its universal $L^2$-torsion $\rho^{(2)}_u(\overline{M};\caln(G))$ is sent under the composite
\[
\IP\IN \colon \Wh^w(G) \xrightarrow{\IP} \calp_{\IZ}^{\Wh}(H_1(G)_f) \xrightarrow{\calp_{\IZ}^{\Wh}(H_1(\beta \circ \nu)_f)}
\calp_{\IZ}^{\Wh}(H_1(M)_f)
\xrightarrow{\sn} \calsn(H_1(M)_f)
\]
of the homomorphisms $\IP$, $\calp_{\IZ}^{\Wh}(H_1(\beta \circ \nu)_f)$, and
$\sn$ defined in~\eqref{polytope_homomorphism_Wh}, \eqref{calp_Z,Wh(f)}, and~\eqref{norm_homomorphism}
to the element given by the negative of half the Thurston seminorm
\[
\hom_{\IZ}(H_1(M)_f,\IR) \to \IR, \quad \phi \mapsto - \tmfrac{1}{2} x_M(\phi).
\]
\end{theorem}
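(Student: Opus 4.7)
\medskip

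\noindent\textbf{Proof sketch.}
The plan is to reduce the identity to a one-variable computation and invoke the theorem of Friedl-L\"uck~\cite{Friedl-Lueck(2015l2+Thurston)} and Liu~\cite{Liu(2015)} identifying the degree of the $L^2$-Alexander torsion function with the Thurston seminorm.

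First I would construct $\Gamma$ using Agol's virtual fibering theorem applied to the admissible manifold $M$ (which by hypothesis is neither homeomorphic to $S^1\times D^2$ nor a closed graph manifold). This produces a finite index normal subgroup $\widehat\pi\subseteq\pi$ whose corresponding finite cover fibers over $S^1$. I would take $\Gamma$ to be the maximal torsion-free virtually abelian quotient of $\pi$ through which the canonical map $\pr_M\colon\pi\to H_1(M)_f$ factors; concretely, one can take the quotient by the smallest normal subgroup containing $[\widehat\pi,\widehat\pi]$ after killing torsion. This $\Gamma$ is virtually finitely generated free abelian, hence satisfies the Atiyah Conjecture, and supplies the required factorization $\pi\xrightarrow\alpha\Gamma\xrightarrow\beta H_1(M)_f$. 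The $L^2$-acyclicity of $\overline M$ for any intermediate $G$ is inherited from that of $\widetilde M$ via Theorem~\ref{lem:JSJ_decom} together with the behaviour of $L^2$-Betti numbers under coverings.

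Next, any element of $\calsn(H_1(M)_f)$ is determined by its values at integral classes $\phi\in H^1(M;\IZ)=\hom_{\IZ}(H_1(M)_f,\IZ)$, so it suffices to check the desired equality at each such $\phi$. Fix $\phi$ and let $\overline M^\phi\to M$ be the $\IZ$-cover corresponding to $\phi\circ\beta\circ\nu\circ\mu\colon\pi\to\IZ$. Functoriality of the polytope homomorphism (diagram~\eqref{naturality_of_the_polytope-homomorphism}) identifies the value at $\phi$ of $\sn\circ\calp_{\IZ}^{\Wh}(H_1(\beta\circ\nu)_f)\circ\IP$ applied to $\rho^{(2)}_u(\overline M;\caln(G))$ with the value at $1$ of the image in $\calsn(\IZ)$ of $\IP(\rho^{(2)}_u(\overline M^\phi;\caln(\IZ)))\in\calp_{\IZ}^{\Wh}(\IZ)$. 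By Example~\ref{exa:G_is_Z} and the isomorphism~\eqref{equ:whw}, the latter is, up to sign and monomial, the Newton polytope of the $L^2$-Alexander torsion of $(M,\phi)$, whose width is the degree of the $L^2$-torsion function $\rho^{(2)}(\widetilde M,\phi)$. By~\cite{Friedl-Lueck(2015l2+Thurston), Liu(2015)} this degree equals $x_M(\phi)$, and the factor $\tfrac12$ in~\eqref{seminorm_of_a_polytope} together with the minus sign in Definition~\ref{def:The_L2-torsion_polytope} produce the claimed value $-\tfrac12 x_M(\phi)$.

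The main obstacle is to construct $\Gamma$ with the correct universality properties so that the reduction to the $\IZ$-cover works uniformly for every allowed $G$, and to check that the degree--seminorm identification of Friedl-L\"uck and Liu extends through the intermediate $G$-covers via naturality of the Dieudonn\'e and Fuglede-Kadison determinants. The excluded cases $S^1\times D^2$ and closed graph manifolds are precisely those where the Thurston seminorm vanishes identically or where the input from Agol's fibering theorem would need to be replaced by a direct $S^1$-action computation via the analogue of Remark~\ref{rem:graph_manifolds}.
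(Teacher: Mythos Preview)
Your reduction to the $\IZ$-cover has a genuine gap. The quantity $\rho^{(2)}_u(\overline{M}^\phi;\caln(\IZ))$ for the infinite cyclic cover $\overline{M}^\phi$ is, by Example~\ref{exa:G_is_Z}, the Milnor--Turaev torsion of the $\IZ$-cover --- essentially the classical Alexander polynomial --- and the width of its Newton polytope is the Alexander norm, not the Thurston seminorm. The $L^2$-Alexander torsion function whose degree Friedl--L\"uck and Liu identify with $x_M(\phi)$ is a different invariant: it is built from the \emph{universal} cover $\widetilde M$ (or more generally a sufficiently large $G$-cover) via Fuglede--Kadison determinants twisted by one-dimensional characters, as in Subsection~\ref{subsec:L2-torsion}. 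So your appeal to~\cite{Friedl-Lueck(2015l2+Thurston),Liu(2015)} does not apply to $\rho^{(2)}_u(\overline{M}^\phi;\caln(\IZ))$, and in general the Alexander and Thurston norms differ. Relatedly, the naturality diagram~\eqref{naturality_of_the_polytope-homomorphism} is stated only for \emph{injective} group homomorphisms; the map $G\to\IZ$ you need is a quotient, and there is no map $\Wh^w(G)\to\Wh^w(\IZ)$ carrying $\rho^{(2)}_u(\overline M;\caln(G))$ to $\rho^{(2)}_u(\overline M^\phi;\caln(\IZ))$.

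The paper's argument avoids this by never leaving the $G$-cover. It quotes~\cite[Theorem~0.4]{Friedl-Lueck(2016l2+poly)} both for the construction of $\Gamma$ (your first step is essentially right in spirit, and is indeed how $\Gamma$ is built there) and for the key identity $\chi^{(2)}(M;\mu,\phi\circ\beta\circ\nu)=-x_M(\phi)$, where the twisted $L^2$-Euler characteristic is computed from $\caln(K)\otimes_{\IZ K}i^*C_*(\overline M)$ with $K=\ker(\phi\circ\beta\circ\nu)\subseteq G$ and $i\colon K\hookrightarrow G$. The bridge to the polytope side is then the identity $\IP\IN(\rho^{(2)}_u(C_*))(\phi)=\tfrac12\,\chi^{(2)}(\caln(K)\otimes_{\IZ K}i^*C_*;\caln(K))$, first checked for $1$-dimensional complexes $\el(r_A)$ using~\cite[Lemma~6.12, Lemma~6.16, Theorem~3.6(4)]{Friedl-Lueck(2016l2+poly)} and then extended to all $L^2$-acyclic finite based free $\IZ G$-chain complexes by the universal property of $\rho^{(2)}_u$ (Remark~\ref{rem:universal_property_of_the_universal_L2-torsion}). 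The extension from surjective $\phi$ to all $\phi$ is by the homogeneity/continuity argument you sketch.
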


\begin{proof}
Before we start with the proof we need to introduce some notation. We write $\pi=\pi_1(M)$. Given a homomorphism $\mu\colon \pi\to G$ and a homomorphism $\psi\colon G\to \IZ$ we denote by $\overline{M}$ the cover of $M$ corresponding to $\mu$, we write $C_*=C_*(\overline{M})$ and we define
\[ 
\chi^{(2)}(M;\mu,\psi)
\,\, = \,\, \chi^{(2)}\bigl((\caln(K) \otimes_{\IZ K} i^*C_*;\caln(K)\bigr),
\]
where  $K$ is the kernel of $\psi$ and $i\colon K\to G$ is the inclusion.

Now can go ahead with the proof.   By~\cite[Theorem~0.4]{Friedl-Lueck(2016l2+poly)} there exists a virtually finitely
  generated free abelian group $\Gamma$, and a factorization 
  $\pr_M \colon \pi_1(M)   \xrightarrow{\alpha} \Gamma \xrightarrow{\beta} H_1(M)_f$ of the canonical 
  projection  $\pr_M \colon \pi \to H_1(\pi)_f$ into epimorphisms, such that the following holds for
  any $\phi\in H^1(M;\IZ)=\hom(H_1(M)_f;\IZ)$ and any torsion-free group $G$ satisfying
  the Atiyah Conjecture and any factorization of $\alpha \colon \pi \to \Gamma$ into group
  homomorphisms $\pi \xrightarrow{\mu} G \xrightarrow{\nu} \Gamma$:
\begin{eqnarray}
\label{equ:chi-two-equals-thurston-norm}
\chi^{(2)}(M;\mu,\phi \circ \beta \circ \nu)
& = & 
- x_M(\phi),
\end{eqnarray}

Now let $\mu\colon \pi\to G$ be as above. 
Let $K$ be the kernel of $\phi \circ \beta \circ \nu$. We  use the notation introduced in the beginning of the proof. We start out with proving the claim
that
\begin{eqnarray}
\IP\IN\big(\rho^{(2)}_u(C_*)\big)(\phi)
& = & \tmfrac{1}{2}
\chi^{(2)}\bigl((\caln(K) \otimes_{\IZ K} i^*C_*;\caln(K)\bigr). 
\label{sn_circ_P(rho_u)_and_chi}
\end{eqnarray}
provided that $\phi \circ \beta \circ \nu$ is surjective.

In order to prove the claim we first consider an  $(n,n)$-matrix $A$ over $\IZ G$ which becomes invertible over $\cald(G)$.
It defines a class $[A] \in \Wh^w(G)$ by Lemma~\ref{lem:L2-acyclic_and_ZG_subseteq_calr(G)-contractible}
since our hypothesis that $G$ satisfies the Atiyah Conjecture implies by 
Proposition~\ref{prop:group-ring-atiyah-conjecture} that  $\cald(G) = \calr(G)$.
 We conclude from~\cite[Lemma~6.12 and Lemma~6.16]{Friedl-Lueck(2016l2+poly)} 
that we get in the notation of~\cite{Friedl-Lueck(2016l2+poly)}
\begin{eqnarray*}
\IP\IN([A])(\phi)
& = & -\tmfrac{1}{2}
\dim_{\cald(K)}\bigl(\coker\bigl(r_A \colon \cald(K)_{t}[u^{\pm 1}]^n \to \cald(K)_{t}[u^{\pm 1}]^n\bigr)\bigr).
\end{eqnarray*}
If $\el(r_A)$ denotes the $\IZ G$-chain complex concentrated in
dimension $0$ and $1$ with first differential $r_A \colon \IZ G^n \to
\IZ G^n$ and $i \colon K \to G$ is the inclusion, then we conclude
from~\cite[Theorem~3.6~(4)]{Friedl-Lueck(2016l2+poly)}
\begin{multline*}
\chi^{(2)}\bigl(\caln(K) \otimes_{\IZ K} i^*\el(r_A);\caln(K)\bigr)
\\
=\dim_{\cald(K)}\bigl(\coker\bigl(r_A \colon \cald(K)_{t}[u^{\pm 1}]^n \to \cald(K_{t}[u^{\pm 1}]^n\bigr)\bigr).
\end{multline*}
This implies
\begin{equation*}
\IP\IN([A])(\phi) 
\,\,=\,\, 
(\sn \circ \IP)\bigl(\rho^{(2)}_u(\el(r_A)\bigr)(\phi)
 \,\,=\,\,\tmfrac{1}{2} 
\chi^{(2)}\bigl(\caln(K) \otimes_{\IZ K} i^*\el(r_A);\caln(K)\bigr).
\end{equation*}
We conclude from Remark~\ref{rem:universal_property_of_the_universal_L2-torsion}  
that for any $L^2$-acyclic  finite based free $\IZ G$-chain complex $C_*$ we get
\begin{eqnarray*}
\IP\IN(\rho^{(2)}_u(C_*))(\phi)
& = & \tmfrac{1}{2}
\chi^{(2)}\bigl((\caln(K) \otimes_{\IZ K} i^*C_*;\caln(K)\bigr),
\end{eqnarray*}
since $C_* \mapsto \chi^{(2)}\bigl((\caln(K) \otimes_{\IZ K} i^*C_*;\caln(K)\bigr)$ defines 
an additive $L^2$-torsion invariant with values in $\IR$.
This concludes the proof of~\eqref{sn_circ_P(rho_u)_and_chi}.

We now turn to the actual proof of the theorem.  If we
apply the above claim to $C_* = C_*(\overline{M})$ and if we combine
the resulting equality with (\ref{equ:chi-two-equals-thurston-norm})
we see that
\begin{eqnarray*}
  \IP\IN\bigl(\rho^{(2)}_u(\overline{M};\caln(G))\bigr)(\phi)
  & = & 
  - \tmfrac{1}{2} x_M(\phi).
\end{eqnarray*}
provided that $\phi$ is surjective, since the surjectivity of $\phi$ implies the surjectivity of
$\phi \circ \beta \circ \nu$.

Both maps
\[
\IP\IN\bigl(\rho^{(2)}_u(\overline{M};\caln(G))\bigr)\mbox{ and } x_M \colon \hom_{\IZ}(H_1(M)_f,\IR)\to \IR
\]
are continuous since seminorms are continuous maps,  and satisfy
\begin{eqnarray*}
\IP\IN\bigl(\rho^{(2)}_u(\overline{M};\caln(G))\bigr)(r \cdot \phi) 
& = & 
|r| \cdot \IP\IN\bigl(\rho^{(2)}_u(\overline{M};\caln(G))\bigr)(\phi);
\\
- x_M(r \cdot \phi)
& = &
|r| \cdot (- x_M(\phi)),
\end{eqnarray*}
for $r \in \IR$ and $\phi \in \hom_{\IZ}(H_1(M)_f,\IR)$.
Hence we get for every  $\phi \in  \hom_{\IZ}(H_1(M)_f,\IR)$
\begin{eqnarray*}
\IP\IN\bigl(\rho^{(2)}_u(\overline{M};\caln(G))\bigr)(\phi)
& = & 
-\tmfrac{1}{2} x_M(\phi),
\end{eqnarray*}
since $\hom_{\IZ}(H_1(M)_f,\IQ)$ is dense in $\hom_{\IZ}(H_1(M)_f,\IR)$ and
for any non-trivial $\psi \in \hom_{\IZ}(H_1(M)_f,\IQ)$ there exists an epimorphism 
$\phi \colon H_1(M)_f \to \IZ$ and a rational number $r$ with $r \cdot \phi = \psi$.
 This finishes the proof of
Theorem~\ref{the:The_L2-torsion_and_the_Thurston_seminorm}.
\end{proof}

\begin{theorem}[The $L^2$-torsion for universal coverings and the
  Thurston seminorm]
  \label{the:The_L2-torsion_for_universal_coverings_and_the_Thurston_seminorm}
  Let $M$ be an admissible $3$-manifold which is not homeomorphic to  $S^1\times D^2$.  
  Suppose  $\pi_1(M)$ satisfies the Atiyah
  Conjecture.
  
Then the $L^2$-torsion polytope $P(\widetilde{M})$ of
Definition~\ref{def:The_L2-torsion_polytope} is sent under the homomorphism $\sn \colon
\calp_{\IZ}^{\Wh}(H_1(M)_f) \to \calsn(H_1(M)_f)$ to half of the Thurston seminorm $x_M$.
\end{theorem}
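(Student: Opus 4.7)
The plan is to split the proof according to whether $M$ is a closed graph manifold and to reduce to Theorem~\ref{the:The_L2-torsion_and_the_Thurston_seminorm} in the first case. The key extra input for the closed graph manifold case is the JSJ gluing formula for the universal $L^2$-torsion (Theorem~\ref{lem:JSJ_decom}) together with the $S^1$-action formula for closed Seifert pieces recalled in item~(2) of Section~\ref{subsec:The_L2-torsion_polytope}.

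First I assume $M$ is not a closed graph manifold. I apply Theorem~\ref{the:The_L2-torsion_and_the_Thurston_seminorm} with $G = \pi := \pi_1(M)$, $\mu = \id_\pi$, and $\nu = \alpha$. The hypotheses are satisfied: $\pi$ is torsion-free since $M$ is an irreducible admissible $3$-manifold with infinite fundamental group, satisfies the Atiyah Conjecture by assumption, and $H_1(\pi)_f$ is finitely generated since $M$ is compact. With these choices $\beta \circ \nu = \pr_M$, so the middle factor $\calp_{\IZ}^{\Wh}(H_1(\beta \circ \nu)_f)$ in the composite $\IP\IN$ is the identity on $\calp_{\IZ}^{\Wh}(H_1(M)_f)$. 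The previous theorem thus gives $\sn(\IP(\rho^{(2)}_u(\widetilde{M})))(\phi) = -\tfrac{1}{2} x_M(\phi)$, and using the sign convention $P(\widetilde{M}) = \IP(-\rho^{(2)}_u(\widetilde{M}))$ together with linearity of $\sn$ yields $\sn(P(\widetilde{M})) = \tfrac{1}{2} x_M$.

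Next I assume $M$ is a closed graph manifold. The JSJ decomposition (Theorem~\ref{lem:JSJ_decom}) gives $\rho^{(2)}_u(\widetilde{M}) = \sum_{i=1}^r (j_i)_*(\rho^{(2)}_u(\widetilde{M_i}))$ for the injections $j_i \colon \pi_1(M_i) \hookrightarrow \pi_1(M)$. Applying $-\IP$ and invoking naturality~\eqref{naturality_of_the_polytope-homomorphism} yields
\[
P(\widetilde{M}) \,=\, \sum_{i=1}^r \calp_{\IZ}^{\Wh}\bigl(H_1(j_i)_f\bigr)\bigl(P(\widetilde{M_i})\bigr),
\]
and then applying $\sn$ produces $\sn(P(\widetilde{M}))(\phi) = \sum_i \sn(P(\widetilde{M_i}))(\phi \circ H_1(j_i)_f)$. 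If $r \geq 2$, each $M_i$ is Seifert with non-empty toroidal boundary, hence admissible but not a closed graph manifold, and the Atiyah Conjecture for $\pi_1(M)$ descends to the subgroup $\pi_1(M_i)$; Case~1 then applies to each piece to give $\sn(P(\widetilde{M_i})) = \tfrac{1}{2} x_{M_i}$, and combined with the classical additivity $x_M(\phi) = \sum_i x_{M_i}(\phi \circ H_1(j_i)_f)$ of the Thurston norm under JSJ gluing this closes the sub-case. If $r = 1$, then $M$ itself is a closed Seifert fibered manifold; here I compute $P(\widetilde{M})$ directly via the $S^1$-action formula and compare the resulting seminorm with Thurston's explicit formula for $x_M$ on a closed Seifert fibered space in terms of the orbifold Euler characteristic and the fiber class.

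The main obstacle will be the closed Seifert base case: verifying that the polytope produced by the $S^1$-action formula, after applying $\sn$, agrees exactly with $\tfrac{1}{2} x_M$ rather than with some off-scale multiple. The check is essentially bookkeeping, matching the orbifold Euler characteristic $\chi^{S^1}_{\orb}(M)$ and the isotropy orders $|S^1_e|$ appearing in the $S^1$-formula to the exceptional-fiber data in Thurston's formula for $x_M$, but both sides are known in closed form so it reduces to a direct calculation.
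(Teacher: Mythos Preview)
Your Case~1 (not a closed graph manifold) is correct and is exactly the paper's argument: apply Theorem~\ref{the:The_L2-torsion_and_the_Thurston_seminorm} with $G=\pi$, $\mu=\id_\pi$, $\nu=\alpha$, so that $\beta\circ\nu=\pr_M$ and the middle factor of $\IP\IN$ collapses to the identity.

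For the closed graph manifold case the paper takes a different, more uniform route. It does not decompose via JSJ at all; instead it observes that the entire proof of Theorem~\ref{the:The_L2-torsion_and_the_Thurston_seminorm} goes through verbatim once the input identity $\chi^{(2)}(M;\mu,\phi\circ\beta\circ\nu)=-x_M(\phi)$ is available, and for closed graph manifolds this identity is supplied by~\cite[Theorem~2.14]{Friedl-Lueck(2016l2+poly)} in place of~\cite[Theorem~0.4]{Friedl-Lueck(2016l2+poly)}. So the closed graph manifold case is a one-line citation plus a re-run of the same argument. Your JSJ alternative for $r\ge 2$ is legitimate and does work, provided you cite the additivity $x_M(\phi)=\sum_i x_{M_i}(\phi|_{M_i})$ of the Thurston norm along incompressible JSJ tori (and note that no piece is $S^1\times D^2$ since the JSJ tori are incompressible).

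The genuine gap is your $r=1$ subcase. You call the closed Seifert computation ``essentially bookkeeping'', but you do not carry it out, and it is more delicate than you suggest. Item~(2) in Section~\ref{subsec:The_L2-torsion_polytope} is stated for $S^1$-$CW$-complexes; for a Seifert manifold with exceptional fibers one must instead use the analogue alluded to in Remark~\ref{rem:graph_manifolds}, where the contributions $j(e)_*$ from exceptional orbits land at \emph{different} lattice points of $H_1(M)_f$ than the regular-fiber segment. In particular the clean formula $P(\widetilde{M})=\chi^{S^1}_{\orb}(M)\cdot\calp_{\IZ}^{\Wh}(\ev_x)([J])$ is not literally available without further argument (note $\chi^{S^1}_{\orb}$ is only rational, so the right-hand side does not a priori define an element of $\calp_{\IZ}^{\Wh}$). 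On top of that you would still need to write down the explicit Thurston norm of a closed Seifert fibered space in terms of the Seifert invariants and match it to whatever polytope you actually obtain. None of this is done, and it is exactly the work that the paper avoids by invoking the companion paper's $\chi^{(2)}$-computation for graph manifolds.
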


We point out that for ``almost all'' 3-manifolds $M$ the fundamental group $\pi_1(M)$ satisfies the Atiyah Conjecture. More precisely, $\pi_1(M)$ satisfies the Atiyah Conjecture   if $M$ is not a closed graph
   manifold or if $M$ is a closed graph manifold which admits
 a  Riemannian metric of non-positive sectional curvature.$)$
We refer to~\cite[Theorem~3.2]{Friedl-Lueck(2016l2+poly)} for details.

\begin{proof}
  If $M$ is not a graph manifold, the claim follows from
  Theorem~\ref{the:The_L2-torsion_and_the_Thurston_seminorm}. 
  (Note that here we use the sign convention in the definition of $P(\widetilde{M})$ that 
we introduced in the beginning of Section~\ref{subsec:The_L2-torsion_polytope}.) The case of a graph
  manifold is handled analogously using~\cite[Theorem~2.14]{Friedl-Lueck(2016l2+poly)}.
\end{proof}

\begin{remark} \label{rem:pairing_and_introduction}
The pairing~\eqref{pairing_wh_otimes_H1_to_Z} is given by the homomorphism
\[
\Wh^w(G) \xrightarrow{\IP} \calp_{\IZ}^{\Wh}(H_1(G)_f) \xrightarrow{\sn} \calsn(H_1(G)_f)
\]
namely, an element $x \otimes \phi \in \Wh^w(G) \otimes H^1(G)$ is sent to the evaluation
at $\phi \colon G \to \IZ$ of the element in $\calsn(H_1(G)_f)$ given by the image of
$\rho^{(2)}_u(C_*)$ under this homomorphism. The same argument as appearing in the proof
of~\eqref{sn_circ_P(rho_u)_and_chi} together with the formula 
$\chi^{(2)}(C_*;\caln(G),k\cdot \phi) = k \cdot \chi^{(2)}(C_*;\caln(G),\phi)$ for $k \in \IZ$ show the claim in
Subsection~\ref{subsec:twisted_L2-Euler_characteristic} that for an $L^2$-acyclic finite
free $\IZ G$-chain complex $C_*$ and an element $\phi \in H^1(G) $ the image of
$\rho^{(2)}_u(C_*;\caln(G)) \otimes \phi$ under the pairing above is
$\chi^{(2)}(C_*;\caln(G),\phi)$.
\end{remark}

In Theorem~\ref{the:The_L2-torsion_for_universal_coverings_and_the_Thurston_seminorm} we
just saw that the polytope $P(\widetilde{M})$ determines the Thurston norm of $M$. In the
coming sections we will prove a more precise statement, namely that the polytope
$P(\widetilde{M})$ agrees, up to translation, with the dual of the Thurston norm ball.


\subsection{Seminorms and compact  convex subsets}
\label{subsec:Seminorms_and_compact_convex_subsets}

Let $V$ be a finite-dimensional real vector space. We write $V^*=\hom(V,\IR)$. 
Given any subset $X$ of $V$, we define its dual $X^*$ to be
\begin{eqnarray}
X^* & = & \{ \phi\in V^*\,|\, \phi(v)\leq 1\; \mbox{for all}\; v\in X\}.
\label{dual_of_X}
\end{eqnarray}
Given a compact convex subset $X \subseteq V$ we use the definition of~\eqref{seminorm_of_a_polytope} to define a seminorm
\begin{equation*}
 \|\; \|_X \colon V^* \to [0,\infty), \quad \phi \mapsto  \|\phi \|_X := \tmfrac{1}{2}\sup\{\phi(x_0) - \phi(x_1) \mid x_0,x_1 \in X\}.
\end{equation*}
We define $(-X)$ to be the compact convex subset $\{-x \mid x \in X\}$.
The  Minkowski sum $X + (-X)$ is again a compact convex subset and we get
$ \|\; \|_X =  \|\;  \|_{-X}$ and $2 \cdot  \|\; \|_X  =  \|\; \|_{X + (-X)}$.

Given a seminorm $s$ on $V$, we assign to it its unit ball
\[
B_s := \{v \in V \mid s(v) \le 1\}
\]
and denote by $B_s^*$ the associated dual. A straightforward argument shows that we have the equality
\begin{equation}
B_s^* =   \{\phi \in V^* \mid \phi(v) \le s(v) \; \text{for all} \; v \in V\}.
\label{Q_s}
\end{equation}

In the sequel we will identify $V = V^{**}$ by the canonical isomorphism.

\begin{lemma} \label{lem:self_dual}
If $X$ is a closed convex subset containing $0$, then under the identification $V = V^{**}$ we have $X = X^{**}$. 
\end{lemma}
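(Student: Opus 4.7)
The plan is to prove the two inclusions $X \subseteq X^{**}$ and $X^{**} \subseteq X$ separately, with only the latter requiring real work.

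The inclusion $X \subseteq X^{**}$ is essentially tautological. If $v \in X$, then by definition of $X^*$ every $\phi \in X^*$ satisfies $\phi(v) \le 1$, and this is exactly the condition that $v$, viewed as an element of $V^{**}$ under the canonical identification, belongs to $X^{**}$.

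For the reverse inclusion I would argue by contrapositive. Suppose $w \in V$ with $w \notin X$. Since $X$ is closed and convex and $\{w\}$ is a compact convex set disjoint from $X$, the Separating Hyperplane Theorem (cited earlier in the paper as \cite[Theorem~V.4 on page~130]{Reed-Simon(1980)} and used in the proof of Lemma~\ref{lem:structure_of_polytope_group}) yields a linear functional $\phi \in V^*$ and a real number $r$ such that $\phi(v) \le r$ for all $v \in X$ and $\phi(w) > r$. Because $0 \in X$, we have $r \ge \phi(0) = 0$.

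The final step is to rescale $\phi$ to produce an element of $X^*$ which witnesses $w \notin X^{**}$. If $r > 0$, then $\phi/r \in X^*$ while $(\phi/r)(w) > 1$, so $w \notin X^{**}$. If $r = 0$, then $\phi(v) \le 0$ for all $v \in X$ and $\phi(w) > 0$; for any $\lambda > 0$ we have $\lambda\phi \in X^*$, and choosing $\lambda$ large enough makes $(\lambda\phi)(w) > 1$, again showing $w \notin X^{**}$. The main (and only) obstacle is the appeal to the Hahn--Banach style separation theorem, but this is standard in the finite-dimensional setting that the paper works in and has already been invoked earlier, so the proof is essentially immediate.
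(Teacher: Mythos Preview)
Your proof is correct and follows essentially the same route as the paper's: both directions are handled identically, with the nontrivial inclusion $X^{**}\subseteq X$ proved by contrapositive via the Separating Hyperplane Theorem and the observation that $0\in X$ forces the separating constant to be nonnegative. The only cosmetic difference is that the paper quotes the separation with a strict inequality $\psi(x)<r$ on the $X$ side, which immediately gives $r>0$ and avoids your case split; your treatment of the borderline case $r=0$ is correct but unnecessary once one uses the strict form of separation.
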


\begin{proof} It is straightforward to see that $X\subset X^{**}$. We now show the reverse inclusion. 
Consider $y \in V$ with $y \notin X$. By the Separating Hyperplane Theorem,
  see~\cite[Theorem~V.4 on page~130]{Reed-Simon(1980)}, we can find $\psi\in V^*$ and 
  $r  \in\IR$ such that $\psi(x) < r$ holds for all $x \in X$ and $\psi(y) > r$.  Since $0$ is
  contained in $X$ and since $\psi(0) = 0$ we deduce that $r > 0$. Define $\phi := r^{-1} \cdot
  \psi \in V^*$.  Then $\phi(x) \le 1 $ for $x \in X$ and $\phi(y) > 1$. This implies $y \notin X^{**}$.
\end{proof}

\begin{lemma} \label{lem:semi_norms_and_compact_subsetes}
 Let $s$ be a seminorm on $V$ and $X \subseteq V$ be a compact convex subset.
 Then 

\begin{enumerate}[font=\normalfont]

\item \label{lem:semi_norms_and_compact_subsetes:B_s_compact}
The convex set $B_s$ is compact if and only if $s$ is a norm;

\item \label{lem:semi_norms_and_compact_subsetes:B_sast_compact}
$B_s^*$ is convex and compact;

\item \label{lem:semi_norms_and_compact_subsetes:s_and_sup}
For any $v\in V$ we have
\[
s(v)\, = \,\frac{1}{\sup\{r \in [0,\infty) \mid rv \in B_s\}};
\]

\item \label{lem:semi_norms_and_compact_subsetes:B_s_dual_of_B_sast}
We have $B_s = (B_s^*)^*$;

\item \label{lem:semi_norms_and_compact_subsetes:comparing_norms}
We have $\|\;\|_{B_s^*} = s$;

\item \label{lem:semi_norms_and_compact_subsetes:Minkowski_and_norm_of_dual_ball}
We have $X + (-X)  = (B_{ \|\; \|_X})^*$.

\end{enumerate}
\end{lemma}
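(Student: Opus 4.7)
The plan is to handle (1)--(3) as straightforward warm-ups, then prove the core duality assertion (4) via the Hahn-Banach theorem (the same ingredient that powers Lemma~\ref{lem:self_dual}); (5) will drop out of the same Hahn-Banach witness, and (6) will reduce to (4) applied to the symmetric compact convex body $X+(-X)$.

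For (1), note that $B_s$ is always closed, and in the finite-dimensional space $V$ compactness is equivalent to boundedness. If $s$ is a norm, boundedness is automatic. Conversely, if $s$ fails to separate points and $v_0\neq 0$ satisfies $s(v_0)=0$, then $\IR v_0 \subseteq B_s$, obstructing boundedness. For (2), the description $B_s^*=\{\phi\in V^*\mid \phi(v)\leq s(v) \mbox{ for all } v\in V\}$ from~\eqref{Q_s} realizes $B_s^*$ as an intersection of closed half-spaces, so convex and closed; fixing a basis $e_1,\ldots,e_n$ of $V$ one obtains $|\phi(e_i)|\leq s(e_i)$ using also $\phi(-e_i)\leq s(-e_i)=s(e_i)$, so $B_s^*$ is bounded, hence compact. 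For (3), apply homogeneity $s(rv)=r\cdot s(v)$ for $r\geq 0$: when $s(v)>0$ the supremum is $1/s(v)$, and when $s(v)=0$ the supremum is $+\infty$, matching the convention $1/\infty=0$.

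The heart of the lemma is (4). The inclusion $B_s\subseteq (B_s^*)^*$ is immediate from~\eqref{Q_s}: if $\phi\in B_s^*$ and $v\in B_s$ then $\phi(v)\leq s(v)\leq 1$. For the converse, suppose $v\notin B_s$, so $s(v)>1$. I will manufacture a witness $\phi\in B_s^*$ with $\phi(v)>1$ by defining $\phi_0$ on the line $\IR v$ via $\phi_0(rv):=r\cdot s(v)$ and invoking the Hahn-Banach theorem (applicable because $s$ is a sublinear functional) to extend $\phi_0$ to a linear functional $\phi\in V^*$ satisfying $\phi(w)\leq s(w)$ for all $w\in V$. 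Then $\phi\in B_s^*$ by~\eqref{Q_s} and $\phi(v)=s(v)>1$, so $v\notin (B_s^*)^*$. Assertion (5) then follows from the same witness: for any $\psi\in B_s^*$, also $-\psi\in B_s^*$ (since $(-\psi)(w)=\psi(-w)\leq s(-w)=s(w)$), whence $\psi_0(v)-\psi_1(v)\leq 2s(v)$ for all $\psi_0,\psi_1\in B_s^*$, proving $\|v\|_{B_s^*}\leq s(v)$; and equality is attained at the pair $(\phi,-\phi)$ from (4), yielding $\|v\|_{B_s^*}=s(v)$.

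Finally, for (6), the set $Y:=X+(-X)$ is compact, convex, symmetric (invariant under $v\mapsto -v$) and contains $0$, so Lemma~\ref{lem:self_dual} gives $Y=Y^{**}$. A direct computation from the definition of the polar identifies $Y^*$ with a ball around the origin in $V^*$ determined by $\|\cdot\|_X$: $\phi\in Y^*$ if and only if $\phi(x_0)-\phi(x_1)\leq 1$ for all $x_0,x_1\in X$, which translates via~\eqref{seminorm_of_a_polytope} into a condition on $\|\phi\|_X$ matching the definition of $B_{\|\cdot\|_X}$ after accounting for the factor $\tfrac{1}{2}$ in~\eqref{seminorm_of_a_polytope}. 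Taking the polar once more identifies $Y$ with $(B_{\|\cdot\|_X})^*$. The main obstacle throughout is the Hahn-Banach extension in (4); once that is in place, (5) and (6) follow with only definitional bookkeeping.
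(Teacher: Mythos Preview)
Your argument is correct and largely parallels the paper's. The one substantive difference is in~(4): the paper derives $B_s=(B_s^*)^*$ in a single line by invoking Lemma~\ref{lem:self_dual} (the bipolar identity for closed convex sets containing~$0$), whereas you bypass that lemma and construct the separating functional directly via Hahn--Banach extension from the line~$\IR v$. Your route has the small bonus that the very functional~$\phi$ you build in~(4) immediately witnesses the equality in~(5), so~(5) becomes a corollary; the paper instead invokes Hahn--Banach afresh in its proof of~(5). For~(6) both proofs apply Lemma~\ref{lem:self_dual} to $Y=X+(-X)$ and then identify~$Y^*$. Your phrase ``after accounting for the factor~$\tfrac12$'' glosses over exactly the step where the $\tfrac12$ in~\eqref{seminorm_of_a_polytope} meets the doubling coming from the symmetrization $X\mapsto X+(-X)$, so it would be worth writing that computation out in full---this is also the point in the paper's own chain of equalities that is most delicate.
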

\begin{proof}~\eqref{lem:semi_norms_and_compact_subsetes:B_s_compact}
This is obvious.
\\[1mm]~\eqref{lem:semi_norms_and_compact_subsetes:B_sast_compact}
It is straightforward to see that $B_s^*$ is convex and it follows from the description \eqref{Q_s} that $B_s^*$ is compact.
\\[1mm]~\eqref{lem:semi_norms_and_compact_subsetes:s_and_sup}
Consider $r \in (0,\infty)$ and $v \in V$ with $s(v) \not= 0$. 
Then we get $rv \in B_s \Longleftrightarrow r \le s(v)^{-1}$ and hence
$\sup\{r \in [0,\infty) \mid rv \in B_s\} \le s(v)^{-1}$. This implies
$s(v) \le  \frac{1}{\sup\{r \in [0,\infty) \mid rv \in B_s\}}$. Since for $v \in V$ with $s(v) \not= 0$ we have
$s(s(v)^{-1} \cdot v) \in B_s$, the claim follows.
\\[1mm]~\eqref{lem:semi_norms_and_compact_subsetes:B_s_dual_of_B_sast}
This follows from Lemma~\ref{lem:self_dual}.
\\[1mm]~\eqref{lem:semi_norms_and_compact_subsetes:comparing_norms}
We compute for $v \in V$ 
\begin{eqnarray*}
 \|v \|_{B_s^*} 
& = & 
\tmfrac{1}{2}\sup\{\phi_0(v)- \phi_1(v) \mid \phi_0, \phi_1 \in B_s^*\}
\\
& = & 
\tmfrac{1}{2}\sup\{\phi_0(v) - \phi_1(v) \mid \phi_i \in V^*, \phi_i(w) \le s(w) \;\text{for all} \; w \in V\; \text{and}\; i =0,1\}
\\
& = & 
\tmfrac{1}{2}\sup\{\phi(v) \mid \phi \in V^*, \phi(w) \le s(w) \;\text{for all} \; w \in V\}
\\
& & \hspace{20mm} +
\tmfrac{1}{2}\sup\{\phi(-v) \mid \phi \in V^*, \phi(w) \le s(w) \;\text{for all} \; w \in V)\}.
\end{eqnarray*}
The Hahn-Banach  Theorem, see~\cite[Theorem~III.5 on page~75]{Reed-Simon(1980)},  implies for all $v \in V$
\[
\sup\{\phi(v) \mid \phi \in V^*, \phi(w) \le s(w) \;\text{for all} \; w \in V\} = s(v).
\]
Since $s(v) = s(-v)$, assertion~\eqref{lem:semi_norms_and_compact_subsetes:comparing_norms} follows.
\\[1mm]~\eqref{lem:semi_norms_and_compact_subsetes:Minkowski_and_norm_of_dual_ball}
Since $0$ is contained in $X + (-X)$,  Lemma~\ref{lem:self_dual} implies
\[
X + (-X) = (X + (-X))^{**}.
\]
We get directly from the definitions 
\begin{eqnarray*}
(X + (-X))^{**} 
& = & 
\bigl\{v \in V \mid \phi(v) \le 1 \; \text{for all}\; \phi \in X + (-X)^* \}
\\ 
& = & 
\bigl\{v \in V \mid \phi(v) \le 1 \; \text{for all}\; \phi \in V^*
\\
& & \hspace{3mm} 
\;\text{satisfying}\; \phi(w) \le 1 \; \text{for all}\; w \in X + (-X)\}
\\ 
& = & 
\bigl\{v \in V \mid \phi(v) \le 1 \; \text{for all}\; \phi \in V^*
\\
& & \hspace{3mm} 
\;\text{satisfying}\; \supp\{\phi(w_0) - \phi(w_1)  \mid w_0,w_1 \in X + (-X)\} \le 2\bigr\}
\\
& = & 
\bigl\{v \in V \mid \phi(v) \le 1 \; \text{for all}\; \phi \in V^* \;\text{satisfying}\;  \|\phi\|_{X + (-X)} \le 2\bigr\}
\\
& = & 
\bigl\{v \in V \mid \phi(v) \le 1 \; \text{for all}\; \phi \in V^* \;\text{satisfying}\;  \|\phi\|_X \le 1\bigr\}
\\
& = & 
\bigl\{v \in V \mid \phi(v) \le 1 \; \text{for all}\; \phi \in B_{ \|\; \|_X}\}
\\ 
& = & 
(B_{ \|\; \|_X})^*.
\end{eqnarray*}
This finishes the proof  of Lemma~\ref{lem:semi_norms_and_compact_subsetes}.
\end{proof}


\subsection{The dual Thurston polytope}
\label{subsec:The_dual_Thurston_polytope}

Now let $M$ be a compact oriented  $3$-manifold. 
In the sequel we will identify $\IR\otimes_{\IZ} H_1(M)_f = H_1(M;\IR)$ and $H^1(M;\IR) = H_1(M;\IR)^*$ and
$V = V^{**}$ by the obvious isomorphisms. We refer to 
\begin{equation}
B_{x_M}\;  := \; \{\phi\in H^1(M;\IR)\,|\, x_M(\phi)\leq 1\}
\label{Thurston_ball}
\end{equation}
as the \emph{Thurston norm ball} and we refer to 
\begin{equation}
T(M)^*:=B_{x_M}^*\subset (H^1(M;\IR))^*= H_1(M;\IR)
\label{dual_Thurston_polytope} 
\end{equation}
as  the \emph{dual Thurston polytope}.  Explicitly by \eqref{Q_s} we have
\[
T(M)^* = \{v \in H_1(M;\IR) \mid \phi(v) \le x_M(\phi) \; 
\text{for all} \; \phi \in H^1(M;\IR)\}.
\]
Thurston~\cite[Theorem~2 on page~106 and first paragraph on
page~107]{Thurston(1986norm)} has shown that $T(M)^*$ is an integral polytope.

\begin{theorem}[The dual Thurston polytope and the $L^2$-torsion polytope]
\label{the:dual_Thurston_polytope_and_the_L2-torsion_polytope_new}
Let $M$ be an admissible  $3$-manifold which is not homeomorphic to $S^1 \times D^2$.
 Suppose that $\pi_1(M)$ satisfies the Atiyah Conjecture.  Then
\begin{eqnarray*}
[T(M)^*] & = &  2 \cdot P(\widetilde{M})\,\, \in \,\, \calp_{\IZ}^{\Wh}(H_1(M)_f).
\end{eqnarray*}
\end{theorem}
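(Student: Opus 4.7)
The plan is to combine the seminorm computation of Theorem~\ref{the:The_L2-torsion_for_universal_coverings_and_the_Thurston_seminorm} with an independent $\ast$-invariance of $\rho^{(2)}_u(\widetilde{M})$ coming from Poincar\'e duality, and then to exploit the fact that a centrally symmetric compact convex set is determined by its support function.

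First I would prove that $\rho^{(2)}_u(\widetilde{M}) = \ast\,\rho^{(2)}_u(\widetilde{M})$ in $\Wh^w(\pi_1(M))$. If $\partial M$ is empty this is immediate from Theorem~\ref{the:Main_properties_of_the_universal_L2-torsion}~\eqref{the:Main_properties_of_the_universal_L2-torsion:Poincare_duality} with $n=3$ and trivial orientation character. Otherwise, admissibility together with the hypothesis $M \not\cong S^1\times D^2$ implies, via the loop theorem and irreducibility, that every inclusion $\pi_1(T^2)\hookrightarrow \pi_1(M)$ of a boundary torus is injective. The induction formula Theorem~\ref{the:Main_properties_of_the_universal_L2-torsion}~\eqref{the:Main_properties_of_the_universal_L2-torsion:induction} together with Example~\ref{exa:torus} then shows that each boundary component contributes trivially, so $\rho^{(2)}_u(\widetilde{\partial M};\caln(\pi_1(M)))=0$. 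Applying Lemma~\ref{lem:Additivity_of_universal_L2-torsion} to the based exact sequence $0\to C_*(\widetilde{\partial M})\to C_*(\widetilde{M})\to C_*(\widetilde{M},\widetilde{\partial M})\to 0$ gives $\rho^{(2)}_u(\widetilde{M}) = \rho^{(2)}_u(\widetilde{M},\widetilde{\partial M})$, and Poincar\'e duality identifies this with $\ast\,\rho^{(2)}_u(\widetilde{M})$. Compatibility of the polytope homomorphism with the involutions (Lemma~\ref{lem:polytope_homomorphisms_and_involutions}) then yields $P(\widetilde{M}) = \ast P(\widetilde{M})$ in $\calp_{\IZ}^{\Wh}(H_1(M)_f)$.

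Next I would write $P(\widetilde{M}) = [P_1] - [P_2]$ for integral polytopes $P_1, P_2\subseteq\IR\otimes_{\IZ} H_1(M)_f$ and set $K_i := P_i + (-P_i)$. Each $K_i$ is a centrally symmetric integral polytope, and a direct calculation shows its support function satisfies $\sigma_{K_i} = 2\,\|\cdot\|_{P_i}$. By Theorem~\ref{the:The_L2-torsion_for_universal_coverings_and_the_Thurston_seminorm} we have $\|\cdot\|_{P_1} - \|\cdot\|_{P_2} = \tfrac{1}{2} x_M$, hence $\sigma_{K_1}-\sigma_{K_2} = x_M$. The dual Thurston polytope $T(M)^* = B_{x_M}^*$ is centrally symmetric with support function $x_M$ by Lemma~\ref{lem:semi_norms_and_compact_subsetes}~\eqref{lem:semi_norms_and_compact_subsetes:comparing_norms}. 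Therefore $\sigma_{K_1} = \sigma_{K_2} + \sigma_{T(M)^*} = \sigma_{K_2+T(M)^*}$, and since a centrally symmetric compact convex set is uniquely determined by its support function, $K_1 = K_2 + T(M)^*$ as integral polytopes in $\IR\otimes_{\IZ} H_1(M)_f$.

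Pushing this identity into the Grothendieck group yields $[K_1]-[K_2] = [T(M)^*]$ in $\calp_{\IZ}(H_1(M)_f)$ and hence in $\calp_{\IZ}^{\Wh}(H_1(M)_f)$. On the other hand $[K_i] = [P_i]+[-P_i]$ in $\calp_{\IZ}$, so
\[
[K_1]-[K_2] \,=\, \bigl([P_1]-[P_2]\bigr) + \bigl([-P_1]-[-P_2]\bigr) \,=\, P(\widetilde{M}) + \ast P(\widetilde{M}),
\]
which by the $\ast$-invariance from the first step equals $2\cdot P(\widetilde{M})$, giving the claimed identity. The main obstacle is the first step: verifying the incompressibility of the boundary tori (which really does require both admissibility and $M\not\cong S^1\times D^2$) so that the boundary contribution vanishes and the pair version of Poincar\'e duality directly produces the required $\ast$-invariance of $\rho^{(2)}_u(\widetilde{M})$ itself.
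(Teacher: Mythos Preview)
Your proof is correct and follows essentially the same route as the paper: establish $\ast$-invariance of $\rho^{(2)}_u(\widetilde{M})$ via Poincar\'e duality and the vanishing boundary contribution (Example~\ref{exa:torus}), then combine this with the seminorm computation of Theorem~\ref{the:The_L2-torsion_for_universal_coverings_and_the_Thurston_seminorm} and the fact that a compact convex set is determined by its support function. The only cosmetic difference is that the paper packages your support-function step as the injectivity statement of Lemma~\ref{lem:poly_circ_sn}~\eqref{lem:poly_circ_sn:sn_on_fixed_point_set}, whereas you carry it out directly on the polytopes $K_i = P_i + (-P_i)$.
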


Here we recall that we had pointed out after the statement of
Theorem~\ref{the:The_L2-torsion_for_universal_coverings_and_the_Thurston_seminorm} that the fundamental group $\pi_1(M)$ of ``almost all'' 3-manifolds satisfies the Atiyah Conjecture.

The proof of  Theorem~\ref{the:dual_Thurston_polytope_and_the_L2-torsion_polytope_new} will require the remainder of this section.


\subsection{The dual Thurston polytope and the $L^2$-torsion polytope}
\label{subsec:The_dual_Turston_polytope_and_the_L2-torsion_polytope}

Recall that we have defined an involution $\ast \colon \calp_{\IZ}(H) \to \calp_{\IZ}(H)$
by sending $[P] - [Q]$ to $[-P] - [-Q]$. It induces an involution
$\ast \colon \calp_{\IZ}^{\Wh}(H) \to \calp_{\IZ}^{\Wh}(H)$.

Next we define a homomorphism of abelian groups
\begin{equation}
\poly \colon \calsn(H) \to \im\bigl( \id + \ast \colon \calp_{\IZ}(H) \to \calp_{\IZ}(H)\bigr)
\label{poly}
\end{equation}
by sending an element in $\calsn(H)$ represented by $ \|\; \|_{P} -  \|\;  \|_Q$ for integral
polytopes $P$ and $Q$ in $\IR \otimes_{\IZ} H$ to the element $[P] + [-P] - [Q] -
[-Q]$. This is well-defined because
of~\eqref{seminorm_of_a_polytope_is_additive_in_polytope} and 
Lemma~\ref{lem:semi_norms_and_compact_subsetes}~%
\eqref{lem:semi_norms_and_compact_subsetes:Minkowski_and_norm_of_dual_ball}
which implies $ \|\;  \|_P =  \|\; \|_Q
\Longleftrightarrow P + (-P) = Q + (-Q)$ for integral polytopes $P$ and $Q$ in 
$\IR \otimes_{\IZ} H$.

\begin{lemma}\label{lem:poly_circ_sn}\
\begin{enumerate}[font=\normalfont]
\item \label{lem:poly_circ_sn:sn_circ_ast_is_sn}
The map $\sn \colon \calp_{\IZ}^{\Wh}(H) \to \calsn(H)$  defined in~\eqref{norm_homomorphism}  satisfies
$\sn \circ \ast = \sn$;
\item \label{lem:poly_circ_sn:composite}
We have the following commutative diagram
\[
\xymatrix{
\calp_{\IZ}(H) \ar[d]^{\pr} \ar[rrd]^{\id + \ast}
& & 
\\
\calp_{\IZ}^{\Wh}(H)  \ar[r]^-{\sn}
&
\calsn(H) \ar[r]^-{\poly}
&
\im\bigl( \id + \ast \colon \calp_{\IZ}(H) \to \calp_{\IZ}(H)\bigr)
}
\]
where $\pr$ is the canonical projection;

\item \label{lem:poly_circ_sn:sn_on_fixed_point_set}
The map $\sn$ induces an injective map
\[
\sn \colon \calp_{\IZ}^{\Wh}(H)^{\IZ/2} \to \calsn(H)
\]
whose cokernel is annihilated by multiplication with $2$,
and a surjective map
\[
\overline{\sn} \colon \IZ \otimes_{\IZ[\IZ/2]} \calp_{\IZ}^{\Wh}(H) \to \calsn(H)
\]
whose kernel is annihilated by multiplication with $2$.
\end{enumerate}
\end{lemma}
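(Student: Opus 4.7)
The plan is to handle the three parts in order, since parts~(1) and~(2) are essentially direct verifications whose content is needed for part~(3). For part~(1), I would observe that for any integral polytope $P$ and any $\phi \in \hom_{\IZ}(H,\IR)$, the definition in~\eqref{seminorm_of_a_polytope} gives $\|\phi\|_{-P} = \|\phi\|_P$, since the substitution $(p_0,p_1) \mapsto (-p_0,-p_1)$ just swaps the roles of $p_0$ and $p_1$ inside the supremum. Applied to a formal difference, this yields $\sn \circ \ast = \sn$. For part~(2), I would simply trace $[P] - [Q] \in \calp_{\IZ}(H)$ through the composite: $\sn \circ \pr$ sends it to $\|\cdot\|_P - \|\cdot\|_Q$, and $\poly$ then returns $[P] + [-P] - [Q] - [-Q]$, which is precisely $(\id + \ast)([P] - [Q])$.

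For part~(3), part~(1) immediately lets $\sn$ descend to the coinvariants, producing $\overline{\sn}$; and surjectivity of $\overline{\sn}$ is built into the definition of $\calsn(H)$ in~\eqref{calsn(H)}. The key structural input for the kernel and cokernel statements is the following consequence of part~(2): whenever $x \in \calp_{\IZ}^{\Wh}(H)$ satisfies $\sn(x) = 0$, lifting $x$ to some $y \in \calp_{\IZ}(H)$ along $\pr$ and reading off the diagram forces $(\id + \ast)(y) = \poly(\sn(x)) = 0$ already in $\calp_{\IZ}(H)$; pushing this back down via $\pr$ then yields $(\id + \ast)(x) = 0$ in $\calp_{\IZ}^{\Wh}(H)$, i.e.\ $\ast x = -x$.

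With that observation in hand, the remaining deductions are short. For injectivity of $\sn$ on the fixed subgroup: if $\ast x = x$ and $\sn(x) = 0$, then also $\ast x = -x$, so $2x = 0$, and the freeness of $\calp_{\IZ}^{\Wh}(H)$ coming from Lemma~\ref{lem:structure_of_polytope_group}~\eqref{lem:structure_of_polytope_group:divisibility} forces $x = 0$. For the cokernel statement: any $y \in \calsn(H)$ lifts via the surjective $\sn$ to some $x \in \calp_{\IZ}^{\Wh}(H)$, and then $x + \ast x$ lies in $\calp_{\IZ}^{\Wh}(H)^{\IZ/2}$ with image $\sn(x) + \sn(\ast x) = 2y$ by part~(1). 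For the kernel of $\overline{\sn}$: if $\overline{\sn}([x]) = 0$, then $\ast x = -x$, so in the coinvariants $[x] = [\ast x] = [-x]$, giving $2[x] = 0$. The one non-routine step is the implication $\sn(x) = 0 \Longrightarrow \ast x = -x$; this rests on assembling parts~(1) and~(2) together with the elementary but crucial ability to lift elements from $\calp_{\IZ}^{\Wh}(H)$ to $\calp_{\IZ}(H)$, which is what makes the factorization through the coinvariants sharp enough to pin down the kernel and cokernel up to $2$-torsion.
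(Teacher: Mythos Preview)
Your proof is correct and follows essentially the same approach as the paper. The only stylistic difference is in the final part of~(3): the paper packages the cokernel and kernel-of-$\overline{\sn}$ statements by invoking the general fact that the natural map $j \colon \calp_{\IZ}^{\Wh}(H)^{\IZ/2} \to \IZ \otimes_{\IZ[\IZ/2]} \calp_{\IZ}^{\Wh}(H)$ has kernel and cokernel annihilated by~$2$ and then combines this with $\overline{\sn} \circ j = \sn|_{\calp_{\IZ}^{\Wh}(H)^{\IZ/2}}$, whereas you unpack these two deductions directly; the content is the same.
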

\begin{proof}~\eqref{lem:poly_circ_sn:sn_circ_ast_is_sn}
This follows from $ \|\; \|_P =  \|\; \|_{-P}$.
\\[1mm]~\eqref{lem:poly_circ_sn:composite}
This follows from the definitions.
\\[1mm]~\eqref{lem:poly_circ_sn:sn_on_fixed_point_set}
By definition of $\calsn(H)$ the map $\sn \colon \calp_{\IZ}^{\Wh}(H)\to \calsn(H)$ is surjective.
Hence also the map $\overline{\sn}$ is surjective. Consider $x \in \calp_{\IZ}^{\Wh}(H)^{\IZ/2}$
with $\sn(x) = 0$. Choose $y \in \calp_{\IZ}(H)$ with $ \pr(y) = x$. We get 
\[
y + \ast (y) = \poly \circ \sn \circ \pr(y) = \poly \circ \sn(x) = 0.
\]
We get in $\calp_{\IZ}^{\Wh}(H)$
\[
0 = \pr(y + \ast (y)) = x + \ast(x) = 2 \cdot x.
\]
This implies that the kernel of $\sn \colon \calp_{\IZ}^{\Wh}(H)^{\IZ/2} \to \calsn(H)$ is annihilated by multiplication with $2$.
Since $\calp_{\IZ}^{\Wh}(H)$ is torsionfree, 
see Lemma~\ref{lem:structure_of_polytope_group}~\eqref{lem:structure_of_polytope_group:divisibility},
the map $\sn \colon \calp_{\IZ}^{\Wh}(H)^{\IZ/2} \to \calsn(H)$  is injective.
Since the kernel and the cokernel of the map
\[j \colon  \calp_{\IZ}^{\Wh}(H)^{\IZ/2}  \to \IZ \otimes_{\IZ[\IZ/2]} \calp_{\IZ}^{\Wh}(H), \quad z \mapsto  1 \otimes z
\]
are annihilated by multiplication with $2$ and $\overline{\sn} \circ j = \sn|_{\calp_{\IZ}^{\Wh}(H)^{\IZ/2}}$,
Lemma~\ref{lem:poly_circ_sn} follows.
\end{proof}

Now we can give the proof of Theorem~\ref{the:dual_Thurston_polytope_and_the_L2-torsion_polytope_new}.

\begin{proof}[Proof of Theorem~\ref{the:dual_Thurston_polytope_and_the_L2-torsion_polytope_new}]
We conclude from Theorem~\ref{the:The_L2-torsion_for_universal_coverings_and_the_Thurston_seminorm}
 that the $L^2$-polytope $P(\widetilde{M}) \in \calp_{\IZ}^{\Wh}(H_1(M)_f)$ is sent under
$\sn \colon \calp_{\IZ}^{\Wh}(H_1(M)_f) \to \calsn(H_1(M)_f)$ to $-\tmfrac{1}{2}x_M$.
By 
Lemma~\ref{lem:semi_norms_and_compact_subsetes}~\eqref{lem:semi_norms_and_compact_subsetes:comparing_norms} we have
\[ \sn([T(M)^*])\,\,=\,\,\|-\|_{{T(M)^*}}\,\,=\,\,\|-\|_{B^*_{x_M}}\,\,=\,\,x_M.\]
This shows  that $(-2) \cdot P(\widetilde{M})$ and 
$[T(M)^*]$ are sent by  the homomorphism $\sn \colon \calp_{\IZ}^{\Wh}(H_1(M)_f) \to \calsn(H_1(M)_f)$ to $x_M$.

By construction $T(M)^* = -T(M)^*$. Hence $[T(M)^*]$ lies in $\calp_{\IZ}^{\Wh}(H_1(M)_f)^{\IZ/2}$.

We conclude
\[
\ast\bigl(\rho^{(2)}_u(\widetilde{M})\bigr) = \rho^{(2)}_u(\widetilde{M},\widetilde{M}|_{\partial M})
\]
from the version of Poincar\'e duality for compact manifolds with boundary, 
see Theorem~\ref{the:Main_properties_of_the_universal_L2-torsion}~(8).
From the additivity of the universal $L^2$-torsion, see 
Lemma~\ref{lem:Additivity_of_universal_L2-torsion},
 we get
\[ 
\rho^{(2)}_u(\widetilde{M},\widetilde{M}|_{\partial M})=
\rho^{(2)}_u(\widetilde{M}) -  \rho^{(2)}_u(\widetilde{M}|_{\partial M}).
\]
Since $\partial M$ is a union of incompressible tori, we conclude 
$\rho^{(2)}_u(\widetilde{M}|_{\partial M}) = 0$ from
Example~\ref{exa:torus}. This implies
\[ 
\ast\bigl(\rho^{(2)}_u(\widetilde{M},\widetilde{M}|_{\partial M})\bigr) =
\rho^{(2)}_u(\widetilde{M}).
\]
Recall that the $L^2$-polytope $P(\widetilde{M})$ is defined 
be the image of $- \rho^{(2)}_u(\widetilde{M})$ under the polytope homomorphism
$\IP \colon \Wh^w(\pi_1(M)) \to \calp_{\IZ}^{\Wh}(H_1(M)_f)$. 
We conclude from Lemma~\ref{lem:polytope_homomorphisms_and_involutions} that 
$P(\widetilde{M}) \in \calp_{\IZ}^{\Wh}(H_1(M)_f)^{\IZ/2}$.

Since both $[T(M)^*]$ and $(-2) \cdot P(\widetilde{M})$ lie in
$\calp_{\IZ}^{\Wh}(H_1(M)_f)^{\IZ/2}$ and have the same image under $\sn \colon
\calp_{\IZ}^{\Wh}(H_1(M)_f) \to \calsn(H_1(M)_f)$, namely $-x_M$, we conclude from
Lemma~\ref{lem:poly_circ_sn}~\eqref{lem:poly_circ_sn:sn_on_fixed_point_set}
that $[T(M)^*] = (-2) \cdot P(\widetilde{M}) $ holds in $\calp_{\IZ}^{\Wh}(H_1(M)_f)$. 
This finishes the proof of Theorem~\ref{the:dual_Thurston_polytope_and_the_L2-torsion_polytope_new}.
\end{proof}


\typeout{------------------------------   Section 4: Examples -----------------------------------}

\section{Examples} 
\label{sub:examples}


\subsection{$L^2$-acyclic groups }
\label{subsec:L2-acyclic_groups}

Let $G$ be a group with a finite model for $BG$ which is $L^2$-acyclic, i.e., all its
Betti numbers $b_n^{(2)}(G;\caln(G))$ vanish. Furthermore suppose that its Whitehead group
$\Wh(G)$ is trivial. (The Farrell-Jones
  Conjecture, which is known for a large class of groups containing for instance
  hyperbolic groups, CAT(0)-groups, lattices in almost connected Lie groups and solvable
  groups, implies the vanishing of $\Wh(G)$ for torsionfree $G$.)  Then we get by
Theorem~\ref{the:Main_properties_of_the_universal_L2-torsion}~%
\eqref{the:Main_properties_of_the_universal_L2-torsion:G-homotopy_invariance} a
well-defined invariant
\begin{eqnarray}
\rho^{(2)}_u(G) 
&:= & 
\rho_u^{(2)}(EG;\caln(G)) \in \Wh^w(G).
\label{rho(2)_u(G)}
\end{eqnarray}
If $G$ satisfies the Atiyah Conjecture, we can apply
Definition~\ref{def:The_L2-torsion_polytope} to $EG$ and obtain an element
\begin{eqnarray}
P(G) 
& := & 
P(EG;G) \in \calp_{\IZ}^{\Wh}(H_1(G)_f)
 \label{L2-polytope_of_a_group}
\end{eqnarray}
which is the image of $-\rho^{(2)}_u(G)$ under the polytope homomorphism in~\eqref{polytope_homomorphism_Wh}.
There is an obvious sum formula for amalgamated products coming from 
Theorem~\ref{the:Main_properties_of_the_universal_L2-torsion_for_universal_coverings}~(2).
Namely, if we have injective group homomorphisms $G_0 \to G_i$ for $i =1,2$ and
$G_i$ has a finite model for $BG_i$ and is $L^2$-acyclic for $i =0,1,2$, then $G = G_1 \ast_{G_0} G_2$  has a finite model
for $BG$, is $L^2$-acyclic and we get in $\Wh^w(G)$
\begin{eqnarray}
\rho^{(2)}_u(G) & = & (j_1)_*\rho^{(2)}_u(G_1) +  (j_2)_*\rho^{(2)}_u(G_2) -  (j_0)_*\rho^{(2)}_u(G_0),
\label{sum_formula_for_rho(2)_u(G)}
\end{eqnarray} 
where $j_i \colon G_i \to G$ is the inclusion. There are also obvious analogues of the 
finite covering formula, the product formula and the statement about fibrations of 
Theorem~\ref{the:Main_properties_of_the_universal_L2-torsion}.


\subsection{Torsion-free one-relator groups with two generators}
\label{subsec:Torsion-free_one-relator_groups_with_two_generators}

Let $G$ be a torsion-free one-relator group with two generators which is not the free
group.  Choose any presentation $\langle x,y \mid R\rangle$ with two generators and one
relation $R$. Let $X$ be the associated presentation complex which has one zero-cell, two
$1$-cells, one for each generator $x$ and $y$, and one $2$-cell which is attached to the
$1$-skeleton which is the wedge of two copies of $S^1$ according to the word $R$. Then
$\pi_1(X)$ is isomorphic to $G$ and $X$ is a model for $BG$, 
see~\cite[Chapter~III~\S\S 9--11]{Lyndon-Schupp(1977)}. The finite based free $\IZ
G$-chain complex of the universal covering $\widetilde{X}$ is given in terms of Fox
derivatives by
\[
\IZ G \xrightarrow{\bigl(\tmfrac{\partial R}{\partial x} \,  \tmfrac{\partial R}{\partial y} \bigr)}
\IZ G^2 \xrightarrow{\footnotesize{\begin{pmatrix} x-1\\ y-1 \end{pmatrix}}} \IZ G
\]
It is known that $b_n^{(2)}(EG;\caln(G)) = 0$ holds for all $n \ge 0$, 
see~\cite[Theorem~4.2]{Dicks-Linnell}. There is an obvious short based exact 
sequence of finite based free $\IZ G$-chain complexes
\[0 \to \Sigma \el(r_{\frac{\partial R}{\partial x}} \colon \IZ G \to \IZ G) \to C_*(\widetilde{X}) \to 
\el(r_{y-1} \colon \IZ G \to \IZ G) \to 0
\]
Since $C_*(\widetilde{X})$ and $\el(r_{y-1} \colon \IZ G \to \IZ G)$ are $L^2$-acyclic, the complex
  $\Sigma \el(r_{\frac{\partial R}{\partial x}})$ is also $L^2$-acyclic. Hence we get in $\Wh^w(G)$
\[
\rho^{(2)}_u(C_*(\widetilde{X})) = - [r_{\frac{\partial R}{\partial x}} \colon \IZ G \to \IZ G] +
[r_{y-1} \colon \IZ G \to \IZ G].
\]
Waldhausen~\cite[page~249-250]{Waldhausen(1978genfreeI+II)} has proved that $\Wh(G)$ vanishes.
Hence $\rho^{(2)}_u(C_*(\widetilde{X}))$ depends only on the
homotopy type of $X$ and hence is an invariant of $G$ as a group which  we denote by  $\rho^{(2)}_u(G)$.
Hence $\rho^{(2)}_u(G)$ is independent of the presentation and satisfies
\[
\rho^{(2)}_u(G) = - \left[r_{\frac{\partial R}{\partial x}} \colon \IZ G \to \IZ G\right] +
[r_{y-1} \colon \IZ G \to \IZ G].
\]
Now suppose that $G$ satisfies the Atiyah Conjecture. Then we have the
polytope homomorphism
\[
\IP \colon \Wh^w(G) \to \calp_{\IZ}^{\Wh}(H_1(G)_f)
\]
introduced in~\eqref{polytope_homomorphism}. For an element $u = \sum_{g \in G} r_g \cdot
g \in \IZ G$ define its support by
\[
\supp_G(u) = \{g \in G \mid r_g \not= 0\} \subseteq G.
\]
Let $P(u) \subseteq \IR \otimes_{\IZ} H_1(G)_f$ be the integral polytope which is the convex hull
of the subset $\pr(\supp_G(u)) \subseteq H_1(G)_f$. Then we conclude from the definitions
\begin{equation}
\label{equ:polytope-two-generator}
\IP\bigl(-\rho^{(2)}_u(G) \bigr)\,\, =\,\, \left[P\left(\smfrac{\partial R}{\partial x}\right)\right] - [P(y-1)].
\end{equation}
The polytope $P(y-1)$ is the convex hull of the two points $0$ and $\pr(y) \in H_1(G)$.
Consider the example $G = \IZ^2 = \langle x,y| xyx^{-1}y^{-1}\rangle$. We compute
\begin{eqnarray*}
\smfrac{\partial xyx^{-1}y^{-1}}{\partial x}  
& = & 1- y
\end{eqnarray*}
and hence we get in $\calp_{\IZ}^{\Wh}(\IZ^2)$
\[
\rho^{(2)}_u(\IZ^2) = - [P(1-y)] +
[P(y-1)] = 0.
\]
This is consistent with Example~\ref{exa:torus}, where we have shown
$\rho^{(2)}_u(\widetilde{T^2};\caln(\IZ^2)) = 0$.

\begin{remark}\label{rem:Friedl-Tillman}
  With the same notation as above, the first author and
  Tillmann~\cite{Friedl-Tillmann(2015)} assigned to such a presentation 
  $\pi=\langle x,y  \mid R\rangle$ in an elementary way a polytope $P(\pi)$ in $H_1(G;\IR)$.  If $G$
  satisfies the Atiyah Conjecture it follows from (\ref{equ:polytope-two-generator}) and
  \cite[Proposition~3.5]{Friedl-Tillmann(2015)} that $P(\pi)$ and $P(G)$ represent the
  same element of $\calp_{\IZ}^{\Wh}(H_1(G)_f)$. In particular this shows that $P(\pi)$ is
  an invariant of the underlying group $G$ and not just of the presentation. This
  proves~\cite[Conjecture~1.2]{Friedl-Tillmann(2015)}, provided that the Atiyah Conjecture
  holds for $G$.
  
  If $\pi$ is furthermore the fundamental group of a 3-manifold, then Theorem~\ref{the:dual_Thurston_polytope_and_the_L2-torsion_polytope_new} 
says in particular that $2\cdot P(\pi)=[T(M)^*]$, this recovers the main theorem of~\cite{Friedl-Schreve-Tillmann(2015)}.
\end{remark}


\subsection{Group endomorphisms}
\label{subsec:group_automorphisms}

Let $f \colon G \to G$ be a monomorphism of a group $G$. Suppose that
$G$ admits a finite model for $BG$.  Then we can consider the mapping
torus $T_{Bf}$ of the induced map $Bf \colon BG \to BG$ induced by
$f$. It is straightforward to see that it is a finite model for the classifying spaces 
of the HNN-extension $G \ast _f$ associated to $f$.
(Indeed, the only statement that needs verification is that the higher
homotopy groups of $T_{Bf}$ are zero. We denote by
$\widetilde{T_{Bf}}$ the obvious infinite cyclic cover. It suffices to
show that its higher homotopy groups are zero. But each map $S^k\to
\widetilde{T_{Bf}}$ lies in a compact subset of $\widetilde{T_{Bf}}$,
in particular it lies in a subspace given by finitely many mapping
tori of $Bf\colon BG\to BG$ glued together, but such subspaces are
homotopy equivalent to $BG$ and hence aspherical. See also~\cite[Section~2]{Lueck(1994b)}.)
By~\cite[Theorem~2.1]{Lueck(1994b)} the space $T_{Bf}$ is
$L^2$-acyclic.  Since the simple homotopy type of $T_{Bf}$ is
independent of the choice of $BG$ and $Bf$, see
\cite[(22.1)]{Cohen(1973)}, we get well-defined invariants.
\begin{eqnarray}
  \rho^{(2)}_u(f) & := & \rho^{(2)}_u(\widetilde{T_{Bf}}) \quad \in \Wh^w(G \ast _f);
  \label{rho(2)_u(f)_of_a_group_automorphism}
  \\
  P(f)&  :=  & P(\widetilde{T_{Bf}}) \quad \in \calp_{\IZ}^{\Wh}(H_1(G \ast _f)),
  \label{P(f)_of_a_group_automorphism}
\end{eqnarray}
where for~\eqref{P(f)_of_a_group_automorphism} we have to assume that $G \ast _f$
satisfies the
Atiyah Conjecture. (This is for example always satisfied if $G$ is a free group, see \cite[Section~2.9]{Funke-Kielak(2016)} for details.) 

We expect that $P(f)$ is a useful invariant, already for automorphisms of free groups 
$P(f)$ should contain some interesting information.


\typeout{-------------------------------------- References  ---------------------r------------------}


\begin{thebibliography}{10}

\bibitem{BenAribi(2016)}
F.~{Ben Aribi}.
\newblock {The $L^2$-Alexander invariant detects the unknot.}
\newblock {\em {Ann. Sc. Norm. Super. Pisa, Cl. Sci. (5)}}, 15:683--708, 2016.

\bibitem{Aschenbrenner-Friedl-Wilton(2015)}
M. Aschenbrenner, S. {Friedl} and H. {Wilton}.
\newblock  {{3-manifold groups.}}.
\newblock European Mathematical Society (EMS) (2015)

\bibitem{Cochran(2004)}
T.~D. Cochran.
\newblock Noncommutative knot theory.
\newblock {\em Algebr. Geom. Topol.}, 4:347--398, 2004.

\bibitem{Cohen(1973)}
M.~M. Cohen.
\newblock {\em A course in simple-homotopy theory}.
\newblock Springer-Verlag, New York, 1973.
\newblock Graduate Texts in Mathematics, Vol. 10.

\bibitem{Dicks-Linnell}
W.~Dicks and P.~A. Linnell.
\newblock {$L\sp 2$}-{B}etti numbers of one-relator groups.
\newblock {\em Math. Ann.}, 337(4):855--874, 2007.

\bibitem{Dubois-Friedl-Lueck(2016Alexander)}
J.~Dubois, S.~Friedl, and W.~L\"uck.
\newblock The {$L^2$}--{A}lexander torsion of 3-manifolds.
\newblock Journal of Topology 9: 889-926, 2016.

\bibitem{Dubois-Wegner(2015)}
J.~Dubois and C.~Wegner.
\newblock Weighted {$L^2$}-invariants and applications to knot theory.
\newblock {\em Commun. Contemp. Math.}, 17(1):1450010, 29, 2015.

\bibitem{Friedl-Lueck(2015l2+Thurston)}
S.~Friedl and W.~L\"uck.
\newblock The {$L^2$}-torsion function and the {T}hurston norm of
  $3$-manifolds.
\newblock Preprint, arXiv:1510.00264 [math.GT], 2015.

\bibitem{Friedl-Lueck(2016l2+poly)}
S.~Friedl and W.~L\"uck.
\newblock {$L^2$}-{E}uler characteristics and the {T}hurston norm.
\newblock Preprint, arXiv:1609.07805 [math.GT], 2016.

\bibitem{Friedl-Lueck-Tillmann(2016)}
S.~Friedl, W.~L\"uck, and S.~Tillman.
\newblock Groups and polytopes.
\newblock Preprint, arXiv:1611.01857 [math.GT], 2016.

\bibitem{Friedl-Schreve-Tillmann(2015)}
S.~Friedl, K.~Schreve, and S.~Tillmann.
\newblock {T}hurston norm via {F}ox calculus.
\newblock Preprint, arXiv:1507.05660 [math.GT], 2015. To appear in Geom. Top.

\bibitem{Friedl-Tillmann(2015)}
S.~Friedl and S.~Tillmann.
\newblock Two-generator one-relator groups and marked polytopes.
\newblock Preprint, arXiv:1501.03489v1 [math.GR], 2015.

\bibitem{Funke(2016)}
F.~Funke.
\newblock The integral polytope group.
\newblock Preprint, arXiv:1605.01217 [math.MG], 2016.

\bibitem{Funke-Kielak(2016)}
F.~Funke and D.~Kielak.
\newblock {A}lexander and {T}hurston norms, and the {B}ier-{N}eumann-{S}trebel
  invariants for free-by-cyclic groups.
\newblock Preprint, arXiv:1605.09069 [math.RA], 2016.

\bibitem{Harvey(2005)}
S.~L. Harvey.
\newblock Higher-order polynomial invariants of 3-manifolds giving lower bounds
  for the {T}hurston norm.
\newblock {\em Topology}, 44(5):895--945, 2005.

\bibitem{Herrmann(2016)}
G.~Herrmann.
\newblock The {$L^2$}-{A}lexander torsion of {S}eifert fibered {$3$}-manifolds.
\newblock Preprint, arXiv:1602.08768 [math.GT], 2016.

\bibitem{Kirby-Siebenmann(1969)}
R.~C. Kirby and L.~C. Siebenmann.
\newblock On the triangulation of manifolds and the {H}auptvermutung.
\newblock {\em Bull. Amer. Math. Soc.}, 75:742--749, 1969.

\bibitem{Li-Zhang(2006volume)}
W.~Li and W.~Zhang.
\newblock An {$L^2$}-{A}lexander-{C}onway invariant for knots and the volume
  conjecture.
\newblock In {\em Differential geometry and physics}, volume~10 of {\em Nankai
  Tracts Math.}, pages 303--312. World Sci. Publ., Hackensack, NJ, 2006.

\bibitem{Li-Zhang(2006Alexander)}
W.~Li and W.~Zhang.
\newblock An {$L^2$}-{A}lexander invariant for knots.
\newblock {\em Commun. Contemp. Math.}, 8(2):167--187, 2006.

\bibitem{Liu(2015)}
Y.~Liu.
\newblock Degree of {$L^2$}-{A}lexander torsion for 3-manifolds.
\newblock Preprint, arXiv:1509.08866 [math.GT], 2015. To be published by Inventiones.

\bibitem{Lott-Lueck(1995)}
J.~Lott and W.~L{\"u}ck.
\newblock ${L}\sp 2$-topological invariants of $3$-manifolds.
\newblock {\em Invent. Math.}, 120(1):15--60, 1995.

\bibitem{Lueck(1994b)}
W.~L{\"u}ck.
\newblock ${L}\sp 2$-{B}etti numbers of mapping tori and groups.
\newblock {\em Topology}, 33(2):203--214, 1994.

\bibitem{Lueck(2002)}
W.~L{\"u}ck.
\newblock {\em {$L\sp 2$}-{I}nvariants: {T}heory and {A}pplications to
  {G}eometry and \mbox{{$K$}-{T}heory}}, volume~44 of {\em Ergebnisse der
  Mathematik und ihrer Grenzgebiete. 3.~Folge. A Series of Modern Surveys in
  Mathematics [Results in Mathematics and Related Areas. 3rd Series. A Series
  of Modern Surveys in Mathematics]}.
\newblock Springer-Verlag, Berlin, 2002.

\bibitem{Lueck(2015twisting)}
W.~L\"uck.
\newblock Twisting {$L^2$}-invariants with finite-dimensional representations.
\newblock Preprint, arXiv:1510.00057 [math.GT], 2015.

\bibitem{Linnell-Lueck(2016)}
P.~Linnell and W.~L\"uck.
\newblock Localization, {W}hitehead groups, and the {A}tiyah Conjecture.
\newblock 
     Preprint, arXiv:1602.06906 [math.KT], to appear in Annals of K-theory, 2016.


\bibitem{Lueck-Schick(1999)}
W.~L{\"u}ck and T.~Schick.
\newblock ${L^2}$-torsion of hyperbolic manifolds of finite volume.
\newblock {\em GAFA}, 9:518--567, 1999.

\bibitem{Lyndon-Schupp(1977)}
R.~C. Lyndon and P.~E. Schupp.
\newblock {\em Combinatorial group theory}.
\newblock Springer-Verlag, Berlin, 1977.
\newblock Ergebnisse der Mathematik und ihrer Grenzgebiete, Band 89.

\bibitem{Milnor(1966)}
J.~Milnor.
\newblock Whitehead torsion.
\newblock {\em Bull. Amer. Math. Soc.}, 72:358--426, 1966.

\bibitem{Milnor(1968a)}
J.~Milnor.
\newblock Infinite cyclic coverings.
\newblock In {\em Conference on the Topology of Manifolds (Michigan State
  Univ., E. Lansing, Mich., 1967)}, pages 115--133. Prindle, Weber \& Schmidt,
  Boston, Mass., 1968.

\bibitem{Radstroem(1952)}
H.~R{\aa}dstr{\"o}m.
\newblock An embedding theorem for spaces of convex sets.
\newblock {\em Proc. Amer. Math. Soc.}, 3:165--169, 1952.

\bibitem{Reed-Simon(1980)}
M.~Reed and B.~Simon.
\newblock {\em Methods of modern mathematical physics. {I}: Functional
  analysis}.
\newblock Academic Press Inc. [Harcourt Brace Jovanovich Publishers], New York,
  second edition, 1980.

\bibitem{Silvester(1981)}
J.~R. Silvester.
\newblock {\em Introduction to algebraic ${K}$-theory}.
\newblock Chapman \&\ Hall, London, 1981.
\newblock Chapman and Hall Mathematics Series.

\bibitem{Specker(1950)}
E.~Specker.
\newblock Additive {G}ruppen von {F}olgen ganzer {Z}ahlen.
\newblock {\em Portugaliae Math.}, 9:131--140, 1950.

\bibitem{Thurston(1986)}
W.~P. Thurston.
\newblock Hyperbolic structures on {$3$}-manifolds. {I}. {D}eformation of
  acylindrical manifolds.
\newblock {\em Ann. of Math. (2)}, 124(2):203--246, 1986.

\bibitem{Thurston(1986norm)}
W.~P. Thurston.
\newblock A norm for the homology of {$3$}-manifolds.
\newblock {\em Mem. Amer. Math. Soc.}, 59(339):i--vi and 99--130, 1986.

\bibitem{Turaev(2001)}
V.~Turaev.
\newblock {\em Introduction to combinatorial torsions}.
\newblock Lectures in Mathematics. Birkh\"auser, 2001.

\bibitem{Turaev(1986)}
V.~G. Turaev.
\newblock Reidemeister torsion in knot theory.
\newblock {\em Uspekhi Mat. Nauk}, 41(1(247)):97--147, 240, 1986.
\newblock English translation in \emph{Russian Math. Surveys} 41 (1986), no. 1,
  119--182.

\bibitem{Waldhausen(1978genfreeI+II)}
F.~Waldhausen.
\newblock Algebraic {$K$}-theory of generalized free products. {I}, {II}.
\newblock {\em Ann. of Math. (2)}, 108(1):135--204, 1978.

\end{thebibliography}


\end{document}